\renewcommand{\mathcal}{\mathscr}
\theoremstyle{definition}
\newtheorem{ntn}{Notation}[section]
\newtheorem{dfn}[ntn]{Definition}
\theoremstyle{plain}
\newtheorem{lem}[ntn]{Lemma}
\newtheorem{prp}[ntn]{Proposition}
\newtheorem{thm}[ntn]{Theorem}
\newtheorem{cnj}[ntn]{Conjecture}
\theoremstyle{remark}
\newtheorem{rmk}[ntn]{Remark}
\newtheorem{exa}[ntn]{Example}
\renewcommand{\aa}{\mathfrak{a}}
\newcommand{\bb}{\mathfrak{b}}
\newcommand{\calD}{\mathcal{D}}
\newcommand{\calF}{\mathcal{F}}
\newcommand{\calO}{\mathcal{O}}
\newcommand{\CC}{\mathbb{C}}
\newcommand{\cc}{\mathfrak{c}}
\renewcommand{\d}{\mathbf{d}}
\newcommand{\eps}{\epsilon}
\renewcommand{\gg}{\mathfrak{g}}
\newcommand{\hh}{\mathfrak{h}}
\newcommand{\gl}{\mathfrak{gl}}
\newcommand{\ideal}[1]{{\langle#1\rangle}}
\newcommand{\into}{\hookrightarrow}
\newcommand{\NN}{\mathbb{N}}
\newcommand{\ol}{\overline}
\newcommand{\p}{\partial}
\newcommand{\PP}{\mathbb{P}}
\newcommand{\QQ}{\mathbb{Q}}
\newcommand{\R}{\mathbf{R}}
\newcommand{\red}{\mathrm{red}}
\newcommand{\RR}{\mathbb{R}}
\renewcommand{\ss}{\mathfrak{s}}
\newcommand{\veps}{\varepsilon}
\newcommand{\wt}{\widetilde}
\newcommand{\xymat}{\SelectTips{cm}{}\xymatrix}
\newcommand{\ZZ}{\mathbb{Z}}
\DeclareMathOperator{\Ann}{Ann}
\DeclareMathOperator{\Der}{Der}
\DeclareMathOperator{\GL}{GL}
\DeclareMathOperator{\gr}{gr}
\DeclareMathOperator{\grad}{grad}
\DeclareMathOperator{\Hom}{Hom}
\DeclareMathOperator{\id}{id}
\DeclareMathOperator{\ord}{ord}
\DeclareMathOperator{\Rep}{Rep}
\DeclareMathOperator{\rk}{rk}
\DeclareMathOperator{\SL}{SL}
\DeclareMathOperator{\Spec}{Spec}
\DeclareMathOperator{\Sym}{Sym}
\DeclareMathOperator{\tr}{tr}
\DeclareMathOperator{\Var}{Var}
\numberwithin{equation}{section}
\begin{document}

\title[$b$-functions of linear free divisors]{On the symmetry of $b$-functions\\of linear free divisors}

\author{Michel Granger}
\address{
M.~Granger\\
D\'epartement de Math\'ematiques\\
Universit\'e d'Angers\\
2 Bd Lavoisier\\
49045 Angers\\
France
}
\email{granger@univ-angers.fr}
\thanks{}

\author{Mathias Schulze}
\address{
M.~Schulze\\
Oklahoma State University\\
Department of Mathematics\\
Stillwater, OK 74078\\
United States}
\email{mschulze@math.okstate.edu}
\thanks{MS was supported by the College of Arts \& Sciences at Oklahoma State University through a FY 2009 Dean's Incentive Grant.}

\date{\today}

\subjclass{11S90, 14F40, 17B66}

\keywords{linear free divisor, prehomogeneous vector space, $b$-function}

\begin{abstract}
We introduce the concept of a prehomogeneous determinant as a nonreduced version of a linear free divisor.
Both are special cases of prehomogeneous vector spaces.
We show that the roots of the $b$-function are symmetric about $-1$ for reductive prehomogeneous determinants and for regular special linear free divisors.
For general prehomogeneous determinants, we describe conditions under which this symmetry still holds.

Combined with Kashiwara's theorem on the roots of $b$-functions, our symmetry result shows that $-1$ is the only integer root of the $b$-function. 
This gives a positive answer to a problem posed by Castro-Jim\'enez and Ucha-Enr\'\i quez in the above cases.

We study the condition of (strong) Euler homogeneity in terms of the action of the stabilizers on the normal spaces.

As an application of our results, we show that the logarithmic comparison theorem holds for Koszul free reductive linear free divisors exactly if they are (strongly) Euler homogeneous.
\end{abstract}

\maketitle
\tableofcontents

\section{Linear free divisors and prehomogeneous determinants}

Linear free divisors have been introduced in \cite{BM06} in the context of quiver representations and have been studied in \cite{GMNS06} and \cite{GMS08}.

A reduced hypersurface $D$ in an $n$-dimensional complex vector space $V$ is called a \emph{linear free divisor} if the module $\Der(-\log D)$ of \emph{logarithmic vector fields} has a basis of global degree $0$ vector fields
\[
\delta_k=A_kx=x^tA_k^t\p_x\in\Gamma(V,\Der(-\log D))_0,\quad A_k\in\CC^{n\times n},\quad k=1,\dots,n.
\]
Then, by Saito's criterion \cite[Thm.~1.8.ii]{Sai80},
\[
f=\det(\delta_1,\dots,\delta_n)\in\Sym(V^*),
\]
is a homogeneous defining equation for $D$ of degree $n$.
Therefore the can assume that $\delta_1=\eps$ is the \emph{Euler vector field}
\[
\eps=\frac{1}{n}\sum_{i=1}^nx_i\p_{x_i}
\]
and then also that $\delta_2,\dots,\delta_n\in\Gamma(V,\Der(-\log f))_0$ where $\Der(-\log f)$ is the module of vector fields tangent to (the level sets of) $f$.

We denote by $G_D\subseteq\GL_V$ the unit component of the stabilizer of $f\in\PP\Sym(V^*)$, and by $A_D\subseteq G_D$ that of $f\in\Sym(V^*)$.
The following was shown in \cite[Lems.~2.2--2.4]{GMNS06}: 
$G_D$ is a linear algebraic group with Lie algebra of infinitesimal generators $\gg_D=\Gamma(V,\Der(-\log D))_0$, and the same hold for $A_D$ with $\aa_D=\Gamma(V,\Der(-\log f))_0$.
By Saito's criterion, $V\backslash D$ is a Zariski open orbit with finite isotropy groups and $G_D$ an $n$-dimensional group.
Conversely, any reduced discriminant of an $n$-dimensional linear algebraic group in $\GL_V$ with an open orbit is a linear free divisor.
Thus, linear free divisors are special cases of discriminants of prehomogeneous vector spaces $(G,\rho,V)$ characterized by the following conditions:
\begin{enumerate}
\item $G\subseteq\GL_V$ is connected with $\dim G=\dim V$,
\item $\rho=\id:G\into\GL_V$,
\item $f(x)=\det(\rho(A_1)x,\dots,\rho(A_n)x)\in\Sym(V^*)$, $\gg=\ideal{A_1,\dots,A_n}$, is reduced.
\end{enumerate}
We shall consider the more general objects defined by dropping the third condition.
However, we shall frequently impose the additional condition of reductivity on $G$ when we study $b$-functions in Section~\ref{65}.

\begin{dfn}\label{58}
Let $G\subseteq\GL_V$ be a connected algebraic group with $\dim G=\dim V=n$ such that $(G,\rho,V)$ is a prehomogeneous vector space where $\rho=\id:G\into\GL_V$. 
Consider a relative invariant 
\begin{equation}\label{3}
f(x)=\det(d\rho(A_1)x,\dots,d\rho(A_n)x)\in\Sym(V^*)
\end{equation}
defined by generators $A_1,\dots,A_n$ of the Lie algebra $\gg$ of $G$.
We call the (possibly nonreduced) discriminant divisor $D\subseteq V$ defined by $f$ a \emph{prehomogeneous (discriminant) determinant}, and a \emph{linear free divisor} in case $f$ is reduced.
We call $D$ \emph{reductive} if $G$ is a reductive group.
We set $G_D=G$, $\gg_D=\gg$, and $\rho_D=\rho$, and we denote the character of $f$ by $\chi_D\colon G_D\to\CC^*$ and its derivative by $d\chi_D\colon\gg_D\to\CC$.
We call 
\begin{equation}\label{2}
A_D=(\ker\chi_D)^\circ,\quad\aa_D=\ker d\chi_D,
\end{equation}
the \emph{annihilator} of $D$ where $^\circ$ denotes the unit component.
\end{dfn}

While $G_D$ is determined by $D$ if $D$ is a linear free divisor, it is part of the data of a general prehomogeneous determinant $D$.
Note that, in Definition~\ref{58}, $f$ is indeed a relative invariant by \cite[Thm.~2.9]{Kim03}.
Then $\chi_D$ is not trivial by \cite[Prop.~2.4.(2)]{Kim03} and we have
\begin{equation}\label{83}
G_D/A_D\cong\CC^*,\quad\gg_D/\aa_D\cong\CC.
\end{equation}
In particular, we may pick $A_1\in\gg_D$ such that
\begin{equation}\label{55}
d\chi_D(A_1)=1.
\end{equation}
We choose $A_1=E/n$ if $D$ is a linear free divisor.
For $A\in\gg_D$, we abbreviate 
\begin{equation}\label{21}
\delta_A=\ideal{d\rho_D(A)x,\p_x}=x^tA^t\p_x,\quad\delta_k=\delta_{A_k}.
\end{equation}
By definition of a relative invariant \cite[\S2.2]{Kim03} and \cite[Lem.~2.15]{Kim03}, 
\begin{equation}\label{20}
f\circ\rho_D(g)=\chi_D(g)\cdot f,\quad\delta_A(f)=\ideal{d\rho_D(A)x,\grad_x(f)}=d\chi_D(A)\cdot f.
\end{equation}
As $\deg f=n$ by \eqref{3} and $G_D\subseteq\GL_V$ with $\dim V=n$ by Definition~\ref{58}, we have
\begin{equation}\label{1}
\deg\chi_D=n=\deg(\det\circ d\rho_D).
\end{equation}

\begin{lem}\label{33}
For a prehomogeneous determinant $D$, $\chi_D=\det\circ\rho_D$ if and only if $\tr\circ d\rho_D=0$ on $\aa_D$.
\end{lem}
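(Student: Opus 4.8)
The plan is to pass from the characters to their derivatives. Since $G_D$ is connected, a character $\psi\colon G_D\to\CC^*$ with $d\psi=0$ is trivial, so two characters coincide as soon as their derivatives do. Hence $\chi_D=\det\circ\rho_D$ holds if and only if $d\chi_D=d(\det\circ\rho_D)$ as linear forms on $\gg_D$. As the derivative of $\det$ at the identity is the trace, $d(\det\circ\rho_D)=\tr\circ d\rho_D$, and the statement becomes the equality $d\chi_D=\tr\circ d\rho_D$ on all of $\gg_D$.

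First I would use the splitting $\gg_D=\aa_D\oplus\CC A_1$ provided by \eqref{83} and \eqref{55}, with $\aa_D=\ker d\chi_D$ and $d\chi_D(A_1)=1$. Two linear forms agree on $\gg_D$ exactly when they agree on $\aa_D$ and at $A_1$. On $\aa_D$ one has $d\chi_D=0$ by \eqref{2}, so agreement there is precisely the asserted condition $\tr\circ d\rho_D=0$ on $\aa_D$. This settles the forward implication outright: if $\chi_D=\det\circ\rho_D$, the derivatives agree everywhere, and restriction to $\aa_D$ gives $\tr\circ d\rho_D|_{\aa_D}=0$. For the converse, the condition on $\aa_D$ handles the $\aa_D$-part, and it remains only to match the two forms at $A_1$, i.e.\ to show $\tr(d\rho_D(A_1))=1=d\chi_D(A_1)$.

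The equality $\tr(d\rho_D(A_1))=1$ is the \emph{crux}, and it is here that the degree relation \eqref{1} enters. To locate it, fix $x_0\in V\setminus D$; since $A\mapsto d\rho_D(A)x_0$ is an isomorphism $\gg_D\xrightarrow{\sim}V$, there is a unique $B\in\gg_D$ with $d\rho_D(B)x_0=x_0$. Substituting $A=B$, $x=x_0$ in the second relation of \eqref{20} and invoking Euler's identity $\langle x_0,\grad_x f(x_0)\rangle=nf(x_0)$ gives $d\chi_D(B)\,f(x_0)=\langle x_0,\grad_x f(x_0)\rangle=nf(x_0)$, so $d\chi_D(B)=n$ and $A_1:=B/n$ is an admissible normalization of \eqref{55}. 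The content I still need is the parallel identity $\tr(d\rho_D(B))=n$; this is the geometric meaning of the second equality $\deg(\det\circ d\rho_D)=n$ in \eqref{1}, and granting it, $\tr(d\rho_D(A_1))=\tr(d\rho_D(B))/n=1$, completing the converse. I expect the main obstacle to be precisely this identification, reading off $\tr(d\rho_D(B))=n$ from the degree of $\det\circ d\rho_D$.

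Finally, as a conceptual cross-check I would compute $\chi_D$ in closed form. Expanding $f(\rho_D(g)x)$ via \eqref{3}, using $d\rho_D(A_k)\rho_D(g)=\rho_D(g)\,d\rho_D(\operatorname{Ad}(g^{-1})A_k)$ and factoring $\rho_D(g)$ out of each column of the determinant, one obtains $\chi_D=(\det\circ\rho_D)\cdot(\det\circ\operatorname{Ad})^{-1}$, where $\operatorname{Ad}$ is the adjoint representation of $G_D$ on $\gg_D$. Thus $\chi_D=\det\circ\rho_D$ is equivalent to unimodularity of $\gg_D$, i.e.\ $\tr\circ\operatorname{ad}=0$; and since $d\chi_D=\tr\circ d\rho_D-\tr\circ\operatorname{ad}$, this reformulation reproduces the condition on $\aa_D$ and again localizes the remaining issue at $A_1$, controlled by \eqref{1}.
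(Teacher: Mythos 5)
Everything in your proposal up to the step you flag is correct: on the connected group $G_D$ two characters agree exactly when their differentials do, the splitting $\gg_D=\CC\cdot A_1\oplus\aa_D$ localizes the question to the value at $A_1$, and your Euler-identity computation showing $d\chi_D(B)=n$ for the unique $B\in\gg_D$ with $d\rho_D(B)x_0=x_0$ is valid. But the identity you leave unproved, $\tr(d\rho_D(B))=n$, is a genuine gap, and it cannot be filled: it does not follow from \eqref{1}, and it is false in the stated generality. Take $G_D\subseteq\GL_2(\CC)$ to be the Borel subgroup $\left\{\left(\begin{smallmatrix}a&b\\0&a^{-1}\end{smallmatrix}\right)\right\}$ of $\SL_2(\CC)$ acting on $V=\CC^2$: it is connected of dimension $2=\dim V$ with open orbit $\{x_2\ne0\}$, and \eqref{3} gives $f=x_2^2$ (up to scale), a nonreduced prehomogeneous determinant. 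Here $\chi_D(g)=a^{-2}$ while $\det\circ\rho_D\equiv1$, and $\aa_D=\ker d\chi_D$ consists of the strictly upper triangular matrices, so $\tr\circ d\rho_D=0$ on $\aa_D$: the hypothesis of the Lemma holds but its conclusion fails. Concretely, for $x_0=(0,1)^t$ one gets $B=\diag(-1,1)$, with $d\chi_D(B)=2=n$ as you predict, but $\tr(d\rho_D(B))=0\ne n$. Your own cross-check formula $\chi_D=(\det\circ\rho_D)\cdot(\det\circ\operatorname{Ad})^{-1}$ explains the failure: $\tr\circ\operatorname{ad}$ vanishes automatically on $[\gg_D,\gg_D]\subseteq\aa_D$ (here equal to $\aa_D$), but $\bb_2$ is not unimodular, so nothing forces vanishing at $A_1$. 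The missing identity does hold whenever $E\in d\rho_D(\gg_D)$, in particular for linear free divisors, where $B=E$ and your argument closes.

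You should also know this is not a defect of your route alone: the paper's own proof makes the same jump, in group-level clothing. It argues that both $\chi_D$ and $\det\circ\rho_D$ descend to characters of $G_D/A_D\cong\CC^*$, whose character group is $\ZZ$, and then invokes \eqref{1} to identify them; but two characters arising from homogeneous degree-$n$ polynomials in the matrix entries need not coincide --- in the example above, $g_{22}^2$ and $\det g$ are both homogeneous of degree $2$ yet induce $a^{-2}$ and the trivial character on $G_D/A_D$. So Lemma~\ref{33} as stated is false for nonreduced, nonreductive prehomogeneous determinants (it is true for linear free divisors, and vacuously consistent in the reductive case). The overstatement is harmless for the paper because the Lemma is only invoked in the proof of Theorem~\ref{56} after the equality $d\chi_D=\tr\circ d\rho_D$ has been established on all of $\gg_D$, at which point only the trivial implication --- equal differentials on a connected group give equal characters, i.e.\ the first paragraph of your proposal --- is actually used.
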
 

\begin{proof}
The converse being trivial, let us assume that the second condition holds.
As $d\det=\tr$, this gives using \eqref{2} that
\[
\ker d(\det\circ\rho_D)\supseteq\aa_D=\ker d\chi_D, \quad\ker(\det\circ\rho_D)\supseteq A_D=\ker\chi_D.
\]
Then both $\chi_D$ and $\det\circ\rho_D$ induce characters on $G_D/A_D$.
But the latter has character group isomorphic to $\ZZ$ by \eqref{83}.
Using \eqref{1}, this implies the equality $\chi_D=\det\circ\rho_D$.
\end{proof}

\begin{lem}\label{80}
If $D$ is a linear free divisor or a reductive prehomogeneous determinant, one can choose $A_1\in\gg_D$ with \eqref{55} such that
\begin{equation}\label{53}
\gg_D=\CC\cdot A_1\oplus\aa_D,\quad\aa_D=\ideal{A_2,\dots,A_n}.
\end{equation}
\end{lem}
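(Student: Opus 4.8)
The plan is as follows. Once we exhibit an element $A_1\in\gg_D$ with $d\chi_D(A_1)=1$, the equality $\gg_D=\CC\cdot A_1\oplus\aa_D$ holds automatically as vector spaces: by \eqref{83} the ideal $\aa_D=\ker d\chi_D$ has codimension one, and $A_1\notin\aa_D$, so completing $A_1$ by any basis $A_2,\dots,A_n$ of the $(n-1)$-dimensional space $\aa_D$ gives $\aa_D=\ideal{A_2,\dots,A_n}$. The substance of the lemma, and the reason a hypothesis is needed, is that in both cases one can choose $A_1$ to be \emph{central} in $\gg_D$, so that \eqref{53} is in fact a decomposition of $\gg_D$ into ideals; for a general prehomogeneous determinant no such central lift of the character need exist.

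For a reductive prehomogeneous determinant I would use the reductive decomposition $\gg_D=\mathfrak{z}(\gg_D)\oplus[\gg_D,\gg_D]$ into its center and its semisimple derived subalgebra. Since $d\chi_D\colon\gg_D\to\CC$ is the differential of the character $\chi_D$, it is a Lie algebra homomorphism into an abelian algebra and hence annihilates $[\gg_D,\gg_D]$, which is perfect; thus $d\chi_D$ is determined by its restriction to $\mathfrak{z}(\gg_D)$. As $\chi_D$ is nontrivial we have $d\chi_D\neq0$, so $d\chi_D|_{\mathfrak{z}(\gg_D)}$ is a nonzero linear functional, and I can pick $A_1\in\mathfrak{z}(\gg_D)$ with $d\chi_D(A_1)=1$. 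This $A_1$ is central by construction, and $\aa_D=\ker d\chi_D=\ker(d\chi_D|_{\mathfrak{z}(\gg_D)})\oplus[\gg_D,\gg_D]$.

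For a linear free divisor I would take $A_1=E/n$, i.e.\ $\delta_{A_1}=\eps$, the element already singled out after \eqref{55}. It is central because $E$ is the scalar (identity) matrix, and it satisfies \eqref{55}: by \eqref{20} one has $\delta_{A_1}(f)=d\chi_D(A_1)\cdot f$, while the Euler relation for the homogeneous polynomial $f$ of degree $n=\deg f$ gives $\eps(f)=f$, whence $d\chi_D(A_1)=1$.

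In either case $A_1$ is central and lies outside $\aa_D$, so $\CC\cdot A_1$ and $\aa_D$ are complementary ideals and \eqref{53} follows once $A_2,\dots,A_n$ are chosen to be any basis of $\aa_D$. The only genuine obstacle is the centrality of $A_1$ in the reductive case, which is exactly where I invoke the structure theory of reductive Lie algebras—the splitting off of the perfect semisimple part on which every character vanishes; everything else is linear algebra together with the already-established facts that $d\chi_D$ is surjective and $\dim\gg_D/\aa_D=1$.
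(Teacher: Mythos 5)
Your proof is correct and takes essentially the same approach as the paper: for a linear free divisor one takes $A_1=E/n$ and uses the Euler relation, and in the reductive case one uses the decomposition of $\gg_D$ into its center plus a semisimple (hence perfect) ideal on which $d\chi_D$ must vanish, so that $A_1$ can be chosen (equivalently, corrected by an element of the semisimple part to be) central. You have also correctly isolated the implicit content of the lemma---centrality of $A_1$, rather than the vector-space splitting, which is automatic from \eqref{83}---and this is precisely what the paper's proof establishes and later invokes, e.g.\ ``$A_1\in\cc_D$'' in the proof of Theorem~\ref{56}.
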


\begin{proof}
Choosing $A_1=E/n$, the first statement follows from \eqref{3} and Saito's criterion.

By \cite[Ch.~III, \S 7, Thm.~10]{Jac79}, reductivity of $\gg_D$ means that
\begin{equation}\label{81}
\gg_D=\cc_D\oplus\ss_D
\end{equation}
where $\cc_D$ is the center of $\gg_D$, $d\rho_D(\cc_D)$ consists of semisimple endomorphisms, and $\ss_D$ is a semisimple ideal.
By semisimplicity of $\ss_D$, we have
\begin{equation}\label{57}
\ss_D=[\ss_D,\ss_D].
\end{equation}
On the other hand, it follows from \eqref{20}, \eqref{2}, and \eqref{55} that
\begin{equation}\label{82}
[\gg_D,\gg_D]\subseteq\aa_D
\end{equation}
and hence $A_1\not\in\ss_D$ by \eqref{57}.
Using \eqref{81} and \eqref{82}, this shows that $A_1\in\cc_D$ after subtracting a suitable element of $\ss_D$.
\end{proof}

In order to say more, we shall focus on the reductive case for the remainder of this section.
The following result is proved in \cite[Cor.~4.4]{GMS08} in the case of linear free divisors.

\begin{thm}\label{56}
Let $D$ be a reductive prehomogeneous determinant. 
Then 
\begin{equation}\label{27}
\chi_D=\det\circ\rho_D,\quad d\chi_D=\tr\circ d\rho_D
\end{equation}
which means that $A_D\subseteq\SL_V$.
\end{thm}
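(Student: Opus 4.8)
The plan is to sidestep the character-group bookkeeping and instead exhibit $\chi_D$ as an explicit ratio of two determinantal characters, one of which reductivity forces to be trivial. First I would reinterpret the defining formula \eqref{3}: for fixed $x$, the assignment $A\mapsto d\rho_D(A)x$ is a linear map $\Phi_x\colon\gg_D\to V$, namely the derivative at the identity of the orbit map $g\mapsto\rho_D(g)x$, and with respect to the basis $A_1,\dots,A_n$ of $\gg_D$ and the standard basis of $V$ one has $f(x)=\det\Phi_x$. The gain is that $\Phi_x$ transforms equivariantly: using the standard identity $\rho_D(g)^{-1}\,d\rho_D(A)\,\rho_D(g)=d\rho_D(\operatorname{Ad}(g^{-1})A)$ I would compute
\begin{equation*}
\Phi_{\rho_D(g)x}=\rho_D(g)\circ\Phi_x\circ\operatorname{Ad}(g^{-1})
\end{equation*}
as linear maps $\gg_D\to V$.

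Taking determinants of this identity and comparing with the semi-invariance $f(\rho_D(g)x)=\chi_D(g)\cdot f$ from \eqref{20}, evaluated at a point $x\notin D$, yields
\begin{equation*}
\chi_D=(\det\circ\rho_D)\cdot(\det\circ\operatorname{Ad})^{-1}
\end{equation*}
as characters of $G_D$. This step uses nothing beyond prehomogeneity, and it is exactly the modular factor $\det\circ\operatorname{Ad}$ that obstructs the symmetry in the non-reductive case. The main work — and the only place reductivity enters — is to show that $\det\circ\operatorname{Ad}$ is trivial, and this is the step I expect to be the crux. For it I would differentiate at the identity: the derivative of $\det\circ\operatorname{Ad}$ is $A\mapsto\tr(\operatorname{ad}_A)$, and I claim this vanishes on reductive $\gg_D$. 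Indeed, by the decomposition \eqref{81}, on the center $\cc_D$ one has $\operatorname{ad}_A=0$, while on $\ss_D=[\ss_D,\ss_D]$ every element is a sum of brackets, so $\operatorname{ad}$ of it is a sum of commutators of endomorphisms and hence traceless. Since $G_D$ is connected, a character with vanishing derivative is trivial, so $\det\circ\operatorname{Ad}\equiv1$.

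Combining the two displays gives $\chi_D=\det\circ\rho_D$ at once, and differentiating gives $d\chi_D=\tr\circ d\rho_D$. Finally, since $\rho_D=\id$, every $g\in\ker\chi_D$ satisfies $\det g=\chi_D(g)=1$, so $\ker\chi_D\subseteq\SL_V$ and a fortiori $A_D=(\ker\chi_D)^\circ\subseteq\SL_V$. The only delicate point should be the equivariance computation for $\Phi_x$, keeping track of $\operatorname{Ad}(g)$ versus $\operatorname{Ad}(g^{-1})$, since everything else is formal. As a consistency check one can compare with Lemma~\ref{33}, which reduces the statement to $\tr\circ d\rho_D|_{\aa_D}=0$: the reduction together with Lemma~\ref{80} disposes of the semisimple part $\ss_D\subseteq\aa_D$ by the same traceless-commutator argument, but leaves the central contribution $\aa_D\cap\cc_D$ untouched, and it is precisely this central part that the vanishing of $\det\circ\operatorname{Ad}$ handles cleanly.
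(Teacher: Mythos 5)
Your proof is correct, and it takes a genuinely different route from the paper's. The paper works infinitesimally: using the decomposition of Lemma~\ref{80} and the formula \eqref{79} imported from [GMNS06, Thm.~6.1] (which requires choosing a Cartan subalgebra of $\ss_D$ and a basis of commuting semisimple plus nilpotent elements), it shows $d\chi_D=\tr\circ d\rho_D$ on $\cc_D$ (where the correction term of \eqref{79} vanishes) and on $\ss_D=[\ss_D,\ss_D]$ (where both sides vanish), and then integrates to the group level by the character-group argument of Lemma~\ref{33}. You work globally: your equivariance $\Phi_{\rho_D(g)x}=\rho_D(g)\circ\Phi_x\circ\operatorname{Ad}(g^{-1})$ is correct, evaluation at some $x\notin D$ is legitimate since $f\ne0$ by prehomogeneity, so the identity $\chi_D=(\det\circ\rho_D)\cdot(\det\circ\operatorname{Ad})^{-1}$ holds for \emph{every} prehomogeneous determinant, and reductivity enters only through unimodularity ($\tr\circ\operatorname{ad}=0$, by your center/commutator argument), which connectedness of $G_D$ upgrades to $\det\circ\operatorname{Ad}\equiv1$. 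As a check, on the nonreductive divisor $(y^2+xz)z$, $\gg_D=\bb_2$, of Table~\ref{5}, one finds for $g$ upper triangular with diagonal $(a,c)$ that $\chi_D(g)=a^2c^4$, $\det\rho_D(g)=a^3c^3$, and $\det\operatorname{Ad}(g)=a/c$, confirming your formula. Your route buys three things: it is self-contained (no appeal to the external formula \eqref{79}); it exhibits the modular character $\det\circ\operatorname{Ad}$ as the exact obstruction to \eqref{27} for a general prehomogeneous determinant; and it proves strictly more, namely that \eqref{27} holds whenever $G_D$ is unimodular (for instance nilpotent), not only reductive. What the paper's route buys is economy in context: the structural apparatus it uses (Lemma~\ref{80}, Cartan subalgebras, rational weights) is needed anyway for Theorem~\ref{4}, so its proof of the theorem is short within that framework. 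One incidental dividend of your identity: differentiating it gives $d\chi_D=\tr\circ d\rho_D-\tr\circ\operatorname{ad}$, so the correction terms in \eqref{79} and \eqref{29} should enter with the opposite sign as printed (equivalently, with the bracket $[N_j,S_i]$), as the $\bb_2$ example confirms; this is harmless for the paper, which uses those terms only where they vanish (on the center, in the proof of Theorem~\ref{56}) or cancel (in the difference yielding \eqref{39}).
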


\begin{proof}
We resume the notation $\cc_D$, $\ss_D$, and $\hh_D$ from Lemma~\ref{80}.
By \eqref{57} and \eqref{82}, $\ss_D\subseteq\aa_D$ and hence $\aa_D=\cc'_D\oplus\ss_D$ where $\cc'_D$ is the center of $\aa_D$.

Now let $\hh_D$ be a Cartan subalgebra of $\ss_D$.
Recall that $A_1\in\cc_D$ by Lemma~\ref{80}.
Pick generators $S_1=A_1,S_2,\dots,S_s$ of $\cc_D+\hh_D$ such that $d\rho_D(S_1),\dots,d\rho_D(S_s)$ are semisimple and extend them to a basis $N_1,\dots,N_r$, $r=n-s$, of $\aa_D$ such that $N_1,\dots,N_r$ are $(\cc_D+\hh_D)$-homogeneous and $d\rho_D(N_1),\dots,d\rho_D(N_r)$ are nilpotent.
Then, by (the proof of) \cite[Thm.~6.1.(4)]{GMNS06}, we have
\begin{equation}\label{79}
d\chi_D(S_i)=\tr\circ d\rho_D(S_i)+\sum_{j=1}^r[S_i,N_j]/N_j,\quad i=1,\dots,s.
\end{equation}
This implies that $d\chi_D=\tr\circ d\rho_D$ on $\cc_D$.
But on $\ss_D$, both sides of this equation are zero by \eqref{57}.
Now the claim follows from Lemma~\ref{33}.
\end{proof}

Our motivation to study the symmetry of $b$-functions of prehomogeneous determinants stems from M.~Sato's fundamental theorem of prehomogeneous vector spaces \cite[Prop.~4.18]{Kim03}.
We shall need the following result to show that it holds for reductive prehomogeneous determinants in Theorem~\ref{64}.

\begin{thm}\label{4}
Let $D$ be a reductive linear free divisor. 
\begin{enumerate}[(a)]
\item\label{4a} The Lie algebra representation $\rho_D$ is defined over $\QQ$ with respect to some $V_\QQ$.
\item\label{4b} With respect to any $V_\QQ$ as in \eqref{4a}, also $D$ is defined over $\QQ$.
\item\label{4c} With respect to any $V_\QQ$ as in \eqref{4a}, also $G_D$ is defined over $\QQ$.
\end{enumerate}
\end{thm}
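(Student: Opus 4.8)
The plan is to treat part \ref{4a} as the essential content and to deduce \ref{4b} and \ref{4c} from it by direct, essentially formal, arguments. The only hypothesis that will really be used is the reductivity of $G_D$: a connected reductive group over $\CC$ is automatically split, and both split reductive groups and their finite-dimensional representations descend to the prime field $\QQ$. Concretely, for \ref{4a} I would produce an abstract split reductive group $\mathbf{G}$ over $\QQ$ together with a $\QQ$-rational representation whose complexification is isomorphic to $(G_D,\rho_D,V)$, and then transport the resulting $\QQ$-structure back to $V$.

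For \ref{4a} I would proceed as follows. By reductivity, write $\gg_D=\cc_D\oplus\ss_D$ as in the proof of Lemma~\ref{80}, with $\cc_D$ the center (acting by semisimple endomorphisms) and $\ss_D$ semisimple, and fix a maximal torus $T_D\subseteq G_D$ with Lie algebra $\cc_D\oplus\hh_D$, where $\hh_D$ is a Cartan subalgebra of $\ss_D$. The semisimple factor is handled by Chevalley's theory: $\ss_D$ admits a Chevalley $\QQ$-form with integral structure constants, and in characteristic $0$ every finite-dimensional representation of $\ss_D$ decomposes into highest-weight modules, each realized over $\QQ$ by a Weyl module with highest weight in $X^*(T_D)$. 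The central torus requires no extra work: the central torus $C_D\subseteq G_D$ (with Lie algebra $\cc_D$) is algebraic, so all of its characters lie in the lattice $X^*(C_D)\cong\ZZ^{\dim\cc_D}$, and hence in the basis of $\cc_D$ dual to a basis of $X^*(C_D)$ the action of $\cc_D$ on $V$ is by integer eigenvalues. Assembling these data, Chevalley's existence and isomorphism theorems furnish a split reductive $\QQ$-group $\mathbf{G}$ with $\mathbf{G}_\CC\cong G_D$, and complete reducibility together with the highest-weight realizations furnishes a representation of $\mathbf{G}$ over $\QQ$ whose complexification is $\rho_D$.

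The main obstacle in \ref{4a} is precisely this last descent of the \emph{representation} (not merely of the group) and the transport of the abstract $\QQ$-structure to the concrete space $V$: one must check that a choice of isomorphism between the complexified $\QQ$-model and $(G_D,V)$ carries the rational lattice to a $V_\QQ$ on which $\gg_D$ acts by rational matrices. This is where the representation theory of split reductive groups over a field of characteristic $0$ enters in full, namely that such representations are defined over the prime field; the integrality of the central characters noted above is exactly what prevents any irrational eigenvalue from appearing.

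Granting \ref{4a}, part \ref{4b} is immediate: with respect to a $V_\QQ$ as in \ref{4a} the rational form $\gg_{D,\QQ}=\gg_D\cap\gl(V_\QQ)$ has a $\QQ$-basis $A_1,\dots,A_n$, and then each matrix $d\rho_D(A_k)$ has rational entries, so the defining equation $f$ from \eqref{3} lies in $\Sym(V_\QQ^*)=\QQ[x_1,\dots,x_n]$; hence $D=\{f=0\}$ is defined over $\QQ$ (a different choice of generators only rescales $f$ by a rational factor). For \ref{4c} I would use that, $D$ being a linear free divisor, $G_D$ is the unit component of the stabilizer of $[f]\in\PP\Sym(V^*)$. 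By \ref{4b} the point $[f]$ is $\QQ$-rational and the action of $\GL_V$ on $\PP\Sym(V^*)$ is defined over $\QQ$, so its stabilizer is a $\QQ$-subgroup and, in characteristic $0$, so is the identity component; thus $G_D$ is defined over $\QQ$. Alternatively one may argue by Galois descent directly: $\gg_D$ is $\mathrm{Gal}(\overline{\QQ}/\QQ)$-stable by \ref{4a}, hence so is the connected algebraic subgroup of $\GL_V$ it generates, which is $G_D$.
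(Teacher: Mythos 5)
Your proposal is correct, but it reaches part~(a) by a genuinely different route than the paper. The paper works entirely at the Lie algebra level: it splits $\aa_D=\cc'_D\oplus\ss_D$ as in Theorem~\ref{56}, handles the semisimple part by Lemma~\ref{15} (finite representations of complex semisimple Lie algebras are defined over $\QQ$, proved by hand via highest-weight modules), diagonalizes the commutative part $\cc'_D\oplus\hh_D$, and then faces exactly the difficulty you isolate --- a priori irrational eigenvalues of the central and Cartan part. It resolves this by observing that $\CC\cdot A_1\oplus\cc'_D\oplus\hh_D$ is a Cartan subalgebra of the \emph{algebraic} Lie algebra $\gg_D$, hence equals its own centralizer, so that \cite[Lem.~1.4]{Sai71} yields rationality there; the shift $A\mapsto A-d\chi_D(A)\cdot A_1$ then transfers rationality to $\cc'_D\oplus\hh_D\subseteq\aa_D$, and \eqref{53} with $A_1=E/n$ finishes. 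You instead work at the group level: Chevalley's existence theorem produces a split reductive $\QQ$-group $\mathbf{G}$ with $\mathbf{G}_\CC\cong G_D$, rationality of representations of split reductive groups (Weyl modules in characteristic $0$) descends the representation, and transport of structure gives $V_\QQ$; the integrality of the character lattice $X^*(C_D)$ plays the role that Saito's lemma plays in the paper --- both are incarnations of the fact that algebraicity of the group forbids irrational weights. Your argument is more uniform (it applies verbatim to any reductive subgroup of $\GL_V$, in particular to reductive prehomogeneous determinants, without ever using $A_1=E/n$ or the character $\chi_D$), at the cost of invoking heavier machinery; the paper's is more self-contained. Parts (b) and (c) are essentially identical in both treatments: (b) from \eqref{3} applied to a rational basis of $\gg_D$, and (c) via the stabilizer of the now $\QQ$-rational point $f\in\PP\Sym(V^*)$, which is how \cite[Lem.~2.2]{GMNS06} argues. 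One small caveat: your alternative Galois-descent remark for (c) is fine in spirit, but as stated it skips verifying that $G_D$ is defined over $\ol\QQ$ before invoking stability under $\mathrm{Gal}(\ol\QQ/\QQ)$; the stabilizer argument you give first is the clean one.
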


For the convenience of the reader we give a proof of the following general fact which is probably well-known to specialists.

\begin{lem}\label{15}
Every finite representation $V$ of a complex semisimple Lie algebra $\ss$ is defined over $\QQ$ with respect to some $V_\QQ$.
\end{lem}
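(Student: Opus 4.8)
The plan is to reduce to the irreducible case and then to manufacture a $\QQ$-form from a Chevalley basis of $\ss$ together with highest weight theory. By Weyl's complete reducibility theorem, $V$ decomposes as a finite direct sum of irreducible $\ss$-modules, and a direct sum of $\QQ$-forms is again a $\QQ$-form for the same $\QQ$-structure on $\ss$; so it suffices to treat an irreducible $V = L(\lambda)$ with dominant integral highest weight $\lambda$. I first fix a Cartan subalgebra $\hh\subseteq\ss$, a system of positive roots, and a Chevalley basis $\{h_i\}\cup\{e_\alpha,f_\alpha\}$ normalized so that all structure constants lie in $\ZZ$. This exhibits a split $\QQ$-form $\ss_\QQ$, namely the $\QQ$-span of the Chevalley basis, with $\ss=\ss_\QQ\otimes_\QQ\CC$, and correspondingly a $\QQ$-form $U(\ss_\QQ)$ of the universal enveloping algebra satisfying $U(\ss)=U(\ss_\QQ)\otimes_\QQ\CC$.

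Next I realize $L(\lambda)$ as an irreducible quotient of a Verma module already defined over $\QQ$. Let $\bb_\QQ=\hh_\QQ\oplus\mathfrak{n}_\QQ$ be the Borel subalgebra spanned by the $h_i$ and the $e_\alpha$, and form $M(\lambda)_\QQ=U(\ss_\QQ)\otimes_{U(\bb_\QQ)}\QQ_\lambda$, where $\QQ_\lambda$ is the one-dimensional $\bb_\QQ$-module on which each $e_\alpha$ acts by $0$ and each $h_i$ acts by the scalar $\langle\lambda,\alpha_i^\vee\rangle$. The decisive point is that integrality of $\lambda$ forces these eigenvalues into $\ZZ\subseteq\QQ$, so $M(\lambda)_\QQ$ genuinely lives over $\QQ$ and satisfies $M(\lambda)_\QQ\otimes_\QQ\CC\cong M(\lambda)$. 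The Verma module $M(\lambda)_\QQ$ has a unique maximal proper submodule $R_\QQ$, characterized as the sum of all weight spaces other than the highest, and I set $L(\lambda)_\QQ=M(\lambda)_\QQ/R_\QQ$, a $\QQ$-subspace stable under $\ss_\QQ$.

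Finally I must check that $L(\lambda)_\QQ$ is really a $\QQ$-form of $V$, i.e. that $L(\lambda)_\QQ\otimes_\QQ\CC\cong L(\lambda)=V$. Equivalently, formation of the maximal submodule commutes with the flat base change $\QQ\into\CC$. This is where the work lies, and it is the main obstacle: one must rule out that the irreducible quotient collapses differently over $\QQ$ than over $\CC$. The resolution is that the weight-space decomposition of $L(\lambda)$, and in particular the multiplicities $\dim L(\lambda)_\mu$, are computed by Weyl's character formula (or Freudenthal's recursion), whose output depends only on $\lambda$ and the root datum and is therefore insensitive to the characteristic-zero ground field. Hence $\dim_\QQ L(\lambda)_\QQ=\dim_\CC L(\lambda)$ weight space by weight space, forcing $R_\QQ\otimes_\QQ\CC=R$ and thus $L(\lambda)_\QQ\otimes_\QQ\CC\cong V$. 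Choosing a $\QQ$-basis of $L(\lambda)_\QQ$ then realizes every $d\rho(X)$, $X\in\ss_\QQ$, by a rational matrix, which is the assertion of the lemma.
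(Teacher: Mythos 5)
Your overall architecture coincides with the paper's: reduce to an irreducible module by complete reducibility, fix the Chevalley rational form $\ss_\QQ$, realize $V$ as the quotient of a Verma module $M(\lambda)_\QQ=U(\ss_\QQ)\otimes_{U(\bb_\QQ)}\QQ_\lambda$ that is visibly defined over $\QQ$ (integrality of the highest weight), and then descend the maximal proper submodule. But two steps are wrong or missing as written. First, your characterization of $R_\QQ$ as ``the sum of all weight spaces other than the highest'' is false: that subspace is not a submodule unless $\lambda=0$, since $e_\alpha f_\alpha v_\lambda=\langle\lambda,\alpha^\vee\rangle v_\lambda\neq 0$ for any simple root $\alpha$ with $\langle\lambda,\alpha^\vee\rangle\neq 0$; and taken literally it would make your $L(\lambda)_\QQ$ one-dimensional. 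The correct description, which is what the paper uses, is that the maximal proper submodule is the sum of all proper submodules, equivalently the maximal submodule meeting the highest weight line in zero (this sum is still proper because every submodule is the direct sum of its weight spaces, so a proper one avoids the highest weight line).

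Second, and more seriously, the step you yourself single out as the crux --- that $R_\QQ\otimes_\QQ\CC=R$ --- is asserted rather than proved. You invoke Weyl's character formula as ``insensitive to the characteristic-zero ground field,'' but the standard proofs of that formula (and of Freudenthal's recursion) are carried out over $\CC$ or an algebraically closed field; to apply it to the abstract quotient $L(\lambda)_\QQ=M(\lambda)_\QQ/R_\QQ$ you must already know that highest weight theory over the split form $\ss_\QQ$ produces the same multiplicities, which is essentially the descent statement being proved --- so the argument is circular unless you supply a reference that develops the theory over arbitrary fields of characteristic zero (e.g.\ Bourbaki, Lie Groups and Lie Algebras, Ch.~VIII). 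Alternatively, the gap closes with a direct argument: (i) $R$ is the radical of the contravariant (Shapovalov) form, whose Gram matrices on each weight space have rational entries in the PBW basis, and the kernel of a rational matrix has a $\QQ$-basis that stays a basis after extension to $\CC$; or (ii) the paper's route: the maximal proper submodule is canonical --- it is the largest submodule meeting $L_\omega$ in zero --- and is homogeneous both for the rational weight decomposition and for the rational structure $U(\ss_\QQ)\otimes_{\bb_\QQ}L_{\omega,\QQ}$ (equivalently, it is stable under the semilinear automorphisms of $\CC/\QQ$ acting through this $\QQ$-form), hence it is defined over $\QQ$. Without one of these inputs, your proof does not go through.
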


\begin{proof}
Let $\hh$ be a Cartan and $\bb$ be a Borel subalgebra of $\ss$.
By Chevalley's Normalization \cite[Ch.~VI, \S6, Thm.~11]{Ser01} $\ss$ has a rational form $\ss_\QQ$.
By complete reducibility we may assume that $V$ is an irreducible $\ss$-module.
Let $V_\omega=U(\ss)\otimes_{\bb}L_\omega$ where $U(\ss)$ denotes the universal enveloping algebra of $\ss$ and $L_\omega$ is a complex one dimensional $\bb$-module of weight $\omega\in\hh^*$.
Then set $E_\omega=V_\omega/N_\omega$ where $N_\omega$ is generated by all $\ss$-modules strictly contained in $V_\omega$.
By \cite[Ch.~VII, \S3, Thm.~2]{Ser01}, $V=E_\omega$ where $\omega$ is integer on the coroots in $\hh_\QQ$ by \cite[Ch.~VII, \S4, Prop.~3.(b)]{Ser01}.
This implies that $V_\omega$ is a direct sum of rational weight spaces for $\hh_\QQ$ and is thus defined over $\QQ$.
Then $N_\omega$ is the maximal $\ss$-module in $V_\omega$ that meets $L_\omega$ in zero.
Choosing a $\QQ$-basis of $L_\omega$, $V_\omega$ also decomposes as a direct sum of $\QQ$-vector spaces $U(\ss_\QQ)\otimes_{\bb_\QQ}L_{\omega,\QQ}$ compatible with the above direct sum decomposition.
Clearly $N_\omega$ is homogeneous with respect to both direct sums and hence defined over $\QQ$.
\end{proof}

\begin{proof}[Proof of Theorem~\ref{4}]
We show that $\rho_D$ is defined over $\QQ$ when restricted to $\aa_D$, which implies \eqref{4a} by \eqref{53} and $A_1=E/n$.
Then \eqref{4b} follows by \eqref{3}, and \eqref{4c} as in the proof of \cite[Lem.~2.2]{GMNS06}.

We resume the notation $\cc'_D$, $\ss_D$, and $\hh_D$ from the proof of Theorem~\ref{56}.
As $\cc'_D\oplus\hh_D$ is commutative, its representation on $V$ can be diagonalized.
By Lemma~\ref{15}, the representation of $\ss_D$ on each $\cc'_D$-weight space of $V$ is defined over $\QQ$.
So there is a $\QQ$-subspace $V_\QQ$ of $V$ with respect to which $\rho_D|_{\ss_D}$ is defined over $\QQ$ and with respect which $\rho_D(\cc'_D\oplus\hh_D)$ consists of semisimple endomorphisms (with possibly complex weights).
As $\CC\cdot A_1\oplus\cc'_D\oplus\hh_D$ is a Cartan subalgebra of $\gg_D$, it follows from \cite[Ch.~III, \S5, Thm.~3.(b)]{Ser01} that it equals its centralizer.
Then \cite[Lem.~1.4]{Sai71} shows that $\rho_D$ is defined over $\QQ$ with respect to $V_\QQ$ on $\CC\cdot A_1\oplus\cc'_D\oplus\hh_D$.
The map $A\mapsto A-d\chi_D(A)\cdot A_1$ from $\CC\cdot A_1\oplus\cc'_D\oplus\hh_D$ to $(\CC\cdot A_1\oplus\cc'_D\oplus\hh_D)\cap\aa_D=\cc'_D\oplus\hh_D$ preserves the property of having eigenvalues in $\QQ$.
Thus, $\rho_D$ is defined over $\QQ$ with respect to $V_\QQ$ also on $\cc'_D\oplus\hh_D$ and hence on $\aa_D$.
Remark that once $\rho_D$ is defined over $\QQ$ on $\gg_D$, this also holds on the center $\CC\cdot A_1\oplus\cc'_D$ of $\gg_D$.
\end{proof}

\section{$b$-functions of reductive prehomogeneous determinants}\label{65}

The (global) $b$-function of $f\in\CC[x]$ is the minimal monic polynomial $B_f(s)\in\CC[s]$ in a functional equation
\begin{equation}\label{7}
B_f(s)\cdot f^s\in D_V[s]\cdot f^{s+1}
\end{equation}
where $D_V$ denotes the ring of algebraic differential operators on $V$.
In general, the existence of the $b$-function has been established by I.N.~Bernstein~\cite{Ber72} in the global and by J.E.~Bj\"ork \cite{Bjo79} in the local case.
M.~Kashiwara \cite{Kas76} proved that the roots of the $b$-function are negative rational numbers.

Let $(G,\rho,V)$ be a reductive prehomogeneous vector space.
Then there is a compact Zariski dense subgroup $H\subseteq G$ and one can assume that $H\subseteq U(n)$ for some basis $x$ of $V$.
We refer to this basis $x$ and the dual basis $y$ of $V^*$ as unitary coordinates.
For a relative invariant $f$ of $(G,\rho,V)$ with character $\chi_f$, the function $f^*\in\Sym(V)$ defined by
\begin{equation}\label{59}
f^*(y)=\ol{f(\ol y)}
\end{equation}
is then a relative invariant of $(G,\rho^*,V^*)$ with character
\begin{equation}\label{61}
\chi_{f^*}=\chi_f^{-1}
\end{equation}
by the unitarian trick (see \cite[\S 2.3]{Kim03}).
In \cite[Prop.~2.22 and 2.23]{Kim03}, this is used to show that $f^*$ defines functional equations 
\begin{gather}
f^*(\p_x)\cdot f^{s+1}(x)=b_f(s)\cdot f^s(x)\label{8},\\
\nonumber f(\p_y)\cdot(f^*)^{s+1}(y)=b_{f^*}(s)\cdot(f^*)^s(y),\\
b_f(s)=b_{f^*}(s)\label{9},
\end{gather}
as in \eqref{7} and that
\begin{equation}\label{74}
\deg b_f(s)=\deg f=n.
\end{equation}
From this one deduces that $(G,\rho,V)$ is a regular prehomogeneous vector space and that any relative invariant $f$ defining the discriminant is nondegenerate \cite[Prop.~2.24]{Kim03}.
In particular, $(G,\rho^*,V^*)$ is again a reductive and hence a regular prehomogeneous vector space.
It is shown in \cite[Cor.~2.5.10]{Gyo91} that 
\begin{equation}\label{34}
b_f(s)=B_f(s),\quad b_{f^*}(s)=B_{f^*}(s).
\end{equation}

Now let $D$ be a (not necessarily reductive) prehomogeneous determinant defined by $f$ as in Definition~\ref{58}.
Then we abbreviate
\[
B_D(s)=B_f(s),\quad b_D(s)=b_f(s),
\]
where the latter is defined in the presence of the functional equations \eqref{8}.

\begin{dfn}\label{60}
Let $D$ be a prehomogeneous determinant. 
Consider $G_{D^*}=\rho_D^*(G_D)\subseteq\GL_{V^*}$ and $\rho_{D^*}=\id\colon G_{D^*}\into\GL_{V^*}$. 
Define $D^*$ and $f^*$ as $D$ and $f$ in Definition~\ref{58} with $(G,\rho,V)=(G_{D^*},\rho_{D^*},V^*)$.
We call $D^*$ the \emph{dual determinant} of $D$.
Note that $D^*$ is a prehomogeneous determinant exactly if $f^*\ne 0$ or $D^*\ne V^*$.
\end{dfn}

For $A\in\gg_D$, we abbreviate 
\begin{equation}\label{19}
\delta_A^*=\ideal{d\rho_D^*(A)y,\p_y}=-y^tA\p_y,\quad\delta_k=\delta_{A_k},
\end{equation}
as in \eqref{21} using \eqref{55} and \eqref{53}.
Note that \eqref{20} also holds for $f$, $D$, $\delta_A$ replaced by $f^*$, $D^*$, $\delta_A^*$.

The following statement is \cite[Prop.~3.7]{GMS08} in case of linear free divisors.

\begin{prp}\label{62}
With $D$ also $D^*$ is a reductive prehomogeneous determinant (or a reductive linear free divisor) defined by $f^*$ in \eqref{59}.
\end{prp}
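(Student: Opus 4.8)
The plan is to show that the dual determinant $D^*$ inherits all the structural properties of $D$ that define a reductive prehomogeneous determinant (or reductive linear free divisor), and to do so by tracking how each piece of Definition~\ref{58} transfers under the duality $\rho_D\mapsto\rho_D^*$. The essential point is that dualizing a representation preserves reductivity of the group, preserves dimensions, and sends the relative invariant $f$ to the relative invariant $f^*$ defined by \eqref{59}, with character inverted as in \eqref{61}. So the bulk of the work is verifying that the data $(G_{D^*},\rho_{D^*},V^*)$ satisfies conditions (1)--(3) (and reducedness in the free case), rather than producing anything genuinely new.

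\emph{First} I would record that $G_{D^*}=\rho_D^*(G_D)$ is connected, since it is the image of the connected group $G_D$ under the algebraic homomorphism $\rho_D^*$, and that $\dim G_{D^*}=\dim V^*=n$. The latter requires that $\rho_D^*$ have finite kernel; but this follows because $\rho_D=\id$ is faithful, so the contragredient $\rho_D^*$ is faithful as well, whence $\dim G_{D^*}=\dim G_D=n=\dim V^*$. \emph{Next} I would verify reductivity: the contragredient of a reductive representation of a reductive group is again reductive, so $G_{D^*}$ is reductive precisely when $G_D$ is, and the direct sum decomposition \eqref{81} dualizes to the analogous decomposition for $\gg_{D^*}=d\rho_D^*(\gg_D)$, with center mapping to center and semisimple ideal to semisimple ideal. \emph{Then} the key computation is to identify the relative invariant of $(G_{D^*},\rho_{D^*},V^*)$ with $f^*$ from \eqref{59}; here I would invoke the unitarian trick exactly as cited after \eqref{61}, which already establishes that $f^*$ is a relative invariant of $(G_D,\rho_D^*,V^*)$ with character $\chi_D^{-1}$. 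Using the generators $A_1,\dots,A_n$ of $\gg_D$ and the abbreviation $\delta_A^*$ from \eqref{19}, the determinantal formula \eqref{3} for $(G_{D^*},\rho_{D^*},V^*)$ produces $f^*$ up to a nonzero scalar, since $d\rho_{D^*}(A)=d\rho_D^*(A)$ and the $b$-function theory recalled around \eqref{8}--\eqref{74} guarantees $f^*\ne0$ in the reductive case.

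\emph{The main obstacle} I anticipate is the well-definedness clause at the end of Definition~\ref{60}: one must ensure that $D^*$ is genuinely a prehomogeneous determinant, i.e.\ that $f^*\ne0$ (equivalently $D^*\ne V^*$). In the reductive setting this is not an issue, because the functional equations \eqref{8}--\eqref{9} together with $\deg b_f=n$ in \eqref{74} force both $f$ and $f^*$ to be nondegenerate relative invariants of degree $n$; in particular $f^*$ is a nonzero polynomial and $(G_{D^*},\rho_{D^*},V^*)$ is itself a reductive prehomogeneous vector space whose discriminant is defined by $f^*$. \emph{Finally}, for the linear free divisor case I would note that reducedness of $f^*$ must be transported from reducedness of $f$; this is where I would appeal directly to the cited result \cite[Prop.~3.7]{GMS08}, observing that the argument there uses only the duality of the prehomogeneous structure and the symmetry \eqref{9} of the $b$-function, both of which we have just re-established in the more general determinantal setting. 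Assembling these verifications shows that $(G_{D^*},\rho_{D^*},V^*)$ meets every requirement of Definition~\ref{58}, which is exactly the assertion of the proposition.
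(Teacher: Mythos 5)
Your peripheral verifications (connectedness and dimension of $G_{D^*}$, faithfulness of $\rho_D^*$, reductivity of the dual) are fine, but the heart of the proposition is the identification of the determinantal polynomial of Definition~\ref{60}, namely $\det(d\rho_D^*(A_1)y,\dots,d\rho_D^*(A_n)y)$, with the conjugate polynomial $f^*(y)=\ol{f(\ol y)}$ of \eqref{59} --- and this is exactly where your argument has a genuine gap. You assert that the determinantal formula ``produces $f^*$ up to a nonzero scalar, since $d\rho_{D^*}(A)=d\rho_D^*(A)$ and \dots\ $f^*\ne0$'', but neither reason does any work: $d\rho_{D^*}(A)=d\rho_D^*(A)$ is a tautology of notation, and $f^*\ne0$ is immediate from \eqref{59} (its coefficients are the conjugates of those of $f$), whereas what must be shown nonzero is the determinantal polynomial, a different object until the identification is made. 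The unitarian trick ``as cited after \eqref{61}'' only says that $\ol{f(\ol y)}$ is a relative invariant of $(G_D,\rho_D^*,V^*)$ with character $\chi_D^{-1}$; it does not relate it to the determinant. The paper's proof consists precisely of the missing computation: in unitary coordinates $\rho_D^*(g)=(\rho_D(g)^{-1})^t=\ol{\rho_D(g)}$, hence $d\rho_D^*(A)=-d\rho_D(A)^t=\ol{d\rho_D(A)}$ (for generators taken in the Zariski dense compact form), so the determinant equals $\ol{f(\ol y)}$ on the nose; nonvanishing and the equivalence ``$f^*$ reduced iff $f$ reduced'' then follow instantly because coefficientwise conjugation is a ring automorphism.

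If you wanted to avoid that computation, the route your sketch gestures at can be completed, but it needs two steps you omit: (i) the determinantal polynomial is nonzero because the dual is a prehomogeneous vector space (at a point $y_0$ of the open orbit the vectors $d\rho_D^*(A_i)y_0$ span $V^*$, since $\dim G_{D^*}=\dim V^*$) --- you instead argue nonvanishing of $f^*$, which is beside the point; and (ii) the two relative invariants are proportional only if their characters agree, and comparing them produces a discrepancy factor $\det\circ\mathrm{Ad}$ on $G_D$, which vanishes exactly because a connected reductive group is unimodular --- a check that is genuinely needed (it fails for, e.g., Borel subgroups, which is one reason the statement is confined to the reductive case). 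Finally, outsourcing reducedness in the linear free divisor case to \cite[Prop.~3.7]{GMS08} is circular in spirit: that proposition \emph{is} the special case of the statement being proved, and the whole point of the paper's argument is that reducedness transfer costs nothing once the conjugation identity is established.
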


\begin{proof}
In unitary coordinates, we have $\rho_D^*(g)=(\rho_D(g)^{-1})^t=\ol{\rho_D(g)}$ and hence $d\rho_D^*(A)=-d\rho_D(A)^t=\ol{d\rho_D(A)}$.
Thus, $f^*$ in Definition~\ref{60} coincides with $f^*$ in \eqref{59}.
This shows that $f^*\ne0$ and that $f^*$ is reduced if and only if $f$ is reduced.
\end{proof}

If $D$ is a reductive prehomogeneous determinant, the two (equivalent) equalities
\begin{gather}\label{17}
\chi_{D^*}\circ\rho_D^*=\chi_{f^*}=\chi_f^{-1}=\chi_D^{-1},\\
\nonumber d\chi_{D^*}\circ d\rho_D^*=d\chi_{f^*}=-d\chi_f=-d\chi_D
\end{gather}
follow from Proposition~\ref{62} and \eqref{61}.

The following observation was the starting point for our study of the symmetry of $b$-functions of prehomogeneous determinants in the remainder of this section.

\begin{thm}\label{64}
M.~Sato's fundamental theorem of homogeneous vector spaces as formulated in \cite[Prop.~4.18]{Kim03} holds true for reductive linear free divisors.
\end{thm}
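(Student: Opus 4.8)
The plan is to verify, one by one, the hypotheses listed in M.~Sato's fundamental theorem \cite[Prop.~4.18]{Kim03} for the prehomogeneous vector space $(G_D,\rho_D,V)$ attached to a reductive linear free divisor $D$. Recall that this formulation of the fundamental theorem applies to a \emph{regular} prehomogeneous vector space that is moreover \emph{defined over $\QQ$}, and its conclusion relates the $b$-function of $f$ to that of the dual $f^*$ via the symmetry $b_f(s)=b_{f^*}(-s-1)$ together with an explicit product formula for the roots. So the work consists in checking that a reductive linear free divisor meets the standing assumptions of that proposition; the symmetry conclusion then follows formally.

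First I would record regularity. Since $D$ is reductive, so is the associated prehomogeneous vector space, and the discussion preceding \eqref{34} shows that reductivity forces the functional equations \eqref{8}, the coincidence $b_f=b_{f^*}$ of \eqref{9}, and the fact established via \cite[Prop.~2.24]{Kim03} that $(G_D,\rho_D,V)$ is regular with $f$ nondegenerate. By Proposition~\ref{62}, the dual $D^*$ is again a reductive linear free divisor defined by $f^*$ as in \eqref{59}, so $(G_D,\rho_D^*,V^*)$ is likewise regular; this is exactly the self-dual regularity that Sato's theorem presupposes. Second, I would supply the rationality input, which is the substantive point: \cite[Prop.~4.18]{Kim03} is stated for prehomogeneous vector spaces defined over $\QQ$. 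Here Theorem~\ref{4} does precisely this job — parts \eqref{4a}, \eqref{4b}, \eqref{4c} give a rational structure $V_\QQ$ over which $\rho_D$, the divisor $D$, and the group $G_D$ are all defined. Hence all the arithmetic data ($f$, the character $\chi_D$, the dual datum $f^*$) are rational, and the relations \eqref{17} identifying $\chi_{D^*}$ with $\chi_D^{-1}$ hold in this rational setting.

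With regularity and rationality in hand, the remaining hypotheses of the proposition — the existence of the reciprocal pair of functional equations \eqref{8}, the equality $\deg b_f=\deg f=n$ from \eqref{74}, and the identification $b_f=B_f$, $b_{f^*}=B_{f^*}$ from \eqref{34} — have already been assembled in the preceding paragraphs of Section~\ref{65}. I would therefore simply invoke \cite[Prop.~4.18]{Kim03} verbatim: all of its stated assumptions are now verified for $(G_D,\rho_D,V)$, so its conclusion holds for reductive linear free divisors.

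The main obstacle, and the only place where genuine content enters, is the rationality requirement feeding into the proposition; everything analytic (the functional equations, the degree count, the $b=B$ identification) is inherited from the general reductive theory recalled above, whereas the arithmetic normalization that Sato's statement demands is exactly what Theorem~\ref{4} provides. The one point deserving care is that \emph{any} choice of $V_\QQ$ furnished by Theorem~\ref{4}\eqref{4a} must simultaneously define $D$ and $G_D$ rationally, which is guaranteed by parts \eqref{4b} and \eqref{4c}; this ensures the hypotheses of \cite[Prop.~4.18]{Kim03} are met coherently for a single rational structure rather than for three unrelated ones.
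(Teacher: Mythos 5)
There is a genuine gap here: your plan reduces to checking the stated hypotheses of \cite[Prop.~4.18]{Kim03} (regularity and rationality) and then invoking that proposition verbatim, but you have overlooked its remaining hypothesis, namely that the relative invariant $f$ be \emph{irreducible}. This hypothesis is not a formality, and it fails for many reductive linear free divisors: already the normal crossings divisor $f=x_1\cdots x_n$ (a reductive linear free divisor whose group is the diagonal torus) has $n$ irreducible factors, as does the star quiver example of Section~\ref{70}. So the proposition cannot be applied verbatim, and the entire content of Theorem~\ref{64} is precisely to extend Sato's fundamental theorem beyond the irreducible case; under your reading the theorem would be an empty hypothesis-check.

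The paper's proof confronts this directly: it traces through Kimura's proof in \cite[\S4.1]{Kim03} and isolates the only two places where irreducibility is used, namely (i) to guarantee that $f$ and $f^*$ can be chosen defined over $\RR$ (\cite[Prop.~4.1.(1), Prop.~4.2]{Kim03}), and (ii) to obtain the identity $|\det\rho(g)|=|\chi_f(g)|^{n/d}$ with $d=\deg f$. For a reductive linear free divisor both consequences hold without irreducibility: (ii) follows from Theorem~\ref{56}, which gives $\chi_D=\det\circ\rho_D$ (and here $d=n$, so the exponent is $1$); (i) follows from Theorem~\ref{4}.\eqref{4b} applied to $D$ itself and, via Theorem~\ref{4}.\eqref{4a} transported to $\gg_{D^*}=d\rho_D^*(\gg_D)$, to the dual divisor $D^*$, whose equation $f^*$ is then also defined over $\QQ$ with respect to $V_\QQ^*$. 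Finally, the nondegeneracy of $f$ and $f^*$ used in \cite[Cor.~4.4]{Kim03} must be secured in the reducible case from \cite[Prop.~2.24]{Kim03}. Your observations on regularity and on the role of Theorem~\ref{4} are correct as far as they go (and they do appear in the paper's argument), but without the analysis of where irreducibility enters Kimura's proof, your argument does not go through.
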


\begin{proof}
The hypotheses for the proof of this theorem in \cite[\S4.1]{Kim03} are fulfilled by Theorem~\ref{4}.\eqref{4c} except for the requirement that $f$ be irreducible.
But this latter assumption is used in the proof only to ensure that $f$ and $f^*$ can be chosen defined over $\RR$ (see \cite[Prop.~4.1.(1), Prop.~4.2]{Kim03}) and that $|\det\rho(g)|=|\chi_f(g)|^{n/d}$ for all $g\in G$ where $d=\deg f$ (see \cite[p.~119]{Kim03}).
But, by Theorem~\ref{56} and \ref{4}.\eqref{4b}, these statements hold true for any reductive linear free divisor.
Indeed, it follows from Theorem~\ref{4}.\eqref{4a} that the Lie algebra representation $d\rho_{D^*}\colon\gg_{D^*}=d\rho_D^*(\gg_D)\into\gl_{V^*}$ is defined over $\QQ$ with respect to $V_\QQ^*$.
Then the defining equation $f^*$ of the dual linear free divisor $D^*$ is defined over $\QQ$ with respect to $V_\QQ^*$ by Theorem~\ref{4}.\eqref{4b}.
Note also that the nondegeneracy of $f$ and $f^*$ used in \cite[Cor.~4.4]{Kim03} follows from \cite[Prop.~2.24]{Kim03} even in the reducible case.
\end{proof}

A symmetry property of the $b$-function as in Lemma~\ref{24} below is a corollary of Sato's fundamental theorem (see \cite[Prop.~4.19]{Kim03}) for irreducible reductive prehomogeneous vector spaces.
For reductive prehomogeneous determinants, we shall give an independent proof which is more elementary in this setting.
Our argument also applies to regular linear free divisors (see Section~\ref{26}). 

\begin{lem}\label{24}
Let $D$ be a prehomogeneous determinant with $D^*\ne V^*$ such that \eqref{8}, \eqref{9}, \eqref{27}, and \eqref{17} hold true.
Then its $b$-function satisfies
\[
b_D(s)=(-1)^n\cdot b_D(-s-2).
\]
\end{lem}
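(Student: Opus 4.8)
The plan is to derive the functional symmetry by applying the operator identity \eqref{8} twice, once for $f$ and once for $f^*$, and then to compare the two resulting scalar functions of $s$. The key observation is that \eqref{27} lets me replace the ``abstract'' character $\chi_D$ by the concrete determinant $\det\circ\rho_D$, so that the Fourier-transform-like duality between $f(\p_y)$ and $f^*(\p_x)$ can be made quantitative. Concretely, the functional equation \eqref{8} reads $f^*(\p_x)\cdot f^{s+1}=b_D(s)\cdot f^s$, and its dual form $f(\p_y)\cdot(f^*)^{s+1}=b_{D^*}(s)\cdot(f^*)^s$. I would first record that $b_D(s)=b_{D^*}(s)$ by \eqref{9}, so I need not distinguish the two $b$-functions, only the two sides of the eventual reflection.

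The main mechanism I would exploit is composing the two operators. Applying $f^*(\p_x)$ to $f^{s+2}$ and reading off the constant through \eqref{8} gives $f^*(\p_x)f^{s+2}=b_D(s+1)f^{s+1}$; iterating, one obtains a recursion linking consecutive values of $b_D$. The crucial step, however, is to pass to the dual side: because $\rho_D^*(g)=\ol{\rho_D(g)}$ in unitary coordinates (as in the proof of Proposition~\ref{62}) and because $A_D\subseteq\SL_V$ by \eqref{27}, the symbol $f^*$ is (up to the explicit factor coming from $\deg f=n$ and the sign $(-1)^n$ introduced when transposing the $A_k$ in \eqref{19} versus \eqref{21}) the Fourier dual of $f$. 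I would therefore reflect $s\mapsto -s-2$ by sending the functional equation for $f$ under the involution $\p_x\leftrightarrow x$, $f\leftrightarrow f^*$, noting that the shift by $-2$ is precisely $\deg f$ on $V$ plus $\deg f^*$ on $V^*$ divided appropriately, and that the total degree $n$ of the differential operator $f^*(\p_x)$ accounts for the factor $(-1)^n$.

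Making this precise is the heart of the argument. I expect the cleanest route is to use the homogeneity relations \eqref{20}, together with $d\chi_D=\tr\circ d\rho_D$ from \eqref{27} and $d\chi_{D^*}\circ d\rho_D^*=-d\chi_D$ from \eqref{17}, to control how $f^*(\p_x)$ interacts with the Euler operator $\delta_{A_1}$ of \eqref{21}. Since $\delta_{A_1}(f)=f$ by \eqref{55} and \eqref{20}, the function $f^s$ is an eigenfunction of the Euler field with eigenvalue $ns$, and the dual operator $f^*(\p_x)$ raises the Euler weight by $-n$; tracking these weights through the composition of the two functional equations should produce exactly the reflection $s\mapsto -s-2$ with the predicted sign. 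The hypothesis $D^*\ne V^*$ guarantees $f^*\ne 0$, so the dual functional equation is nontrivial and $b_{D^*}$ is genuinely defined, which is what allows \eqref{9} to be invoked.

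The step I expect to be the main obstacle is pinning down the sign $(-1)^n$ and the shift $-2$ with full rigor, rather than up to an undetermined constant. The sign must come from a careful bookkeeping of the transposition in \eqref{19} (where $d\rho_D^*(A)=-d\rho_D(A)^t$ contributes a sign to each of the $n$ factors of the determinant defining $f^*$) against the definition \eqref{59} of $f^*$ via complex conjugation; the shift by $2=\tfrac{2n}{n}$ reflects the degree $n$ of $f$ balanced against the degree $n$ of the differential operator $f^*(\p_x)$, normalized by the Euler weight. I would verify these constants by testing the composed identity on the Euler-homogeneity data alone, where every quantity is explicit, and only then assert the general reflection formula.
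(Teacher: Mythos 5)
Your guiding intuition --- that the symmetry should come from the $x\leftrightarrow\p_x$ duality exchanging $f$ and $f^*$, with the sign $(-1)^n$ and the shift coming from degree bookkeeping --- points in the right direction, and it is indeed the engine of the paper's proof. But as written your argument has a genuine gap: the mechanism you propose (composing the two functional equations \eqref{8} and tracking Euler weights) cannot produce the reflection. The functional equation is an identity between the \emph{functions} $f^*(\p_x)f^{s+1}$ and $b_D(s)f^s$; since $f^s$ is not a polynomial, you cannot apply the involution $x\leftrightarrow\p_x$ (i.e.\ the Fourier transform $\calF$ of the Weyl algebra) to this identity directly, and Euler-weight bookkeeping is blind to the roots of $b_D$: homogeneity constrains only degrees and is satisfied equally by $(s+1)^n$ and by any other polynomial of the same degree, symmetric about $-1$ or not. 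So no amount of ``pinning down the constants'' at the end can close the argument; what is missing is the step that converts the functional equation into a statement inside $D_V[s]$ to which $\calF$ can legitimately be applied.

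That missing step is exactly where prehomogeneity enters, and it is the heart of the paper's proof. One first notes that the order-one operators $Q_A=\delta_A-s\cdot\tr(A)$ annihilate $f^s$ (this is where \eqref{27} is used, to identify $d\chi_D(A)$ with $\tr(A)$), and that at every point of the open orbit $U=V\backslash D$ they \emph{generate} $\Ann_{\calD_{V,p}[s]}f^s$, because there $f$ is a unit and the $\delta_k$ span the whole tangent space. Since $U$ is affine, an $H^1$-vanishing argument for quasicoherent sheaves upgrades this to a global ideal membership: $f^m\cdot P\in\ideal{Q_1,\dots,Q_n}_{D_V[s]}$ for some $m$, where $P=f^*(\p_x)\cdot f(x)-b_D(s)\in\Ann_{D_V[s]}f^s$. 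This is now an identity of operators, and only at this point does the Fourier transform act: a direct computation gives $\calF(Q_A(s))=Q_A^*(-s-1)$ (here \eqref{17} guarantees that the $Q_A^*$ annihilate $(f^*)^s$), so $\calF(f^m\cdot P)$ annihilates $(f^*)^{-s-1}$. Evaluating on $(f^*)^{-s-1}$ and using the dual functional equation together with \eqref{9} yields
\[
\bigl((-1)^n\cdot b_D(-s-2)-b_D(s)\bigr)\cdot b_{D^*}(-s-2)\cdots b_{D^*}(-s-m-1)\cdot(f^*)^{-s-1}=0,
\]
hence the claim. Your outline contains the Fourier heuristic and the correct use of \eqref{9}, but omits entirely the annihilator-ideal structure and its globalization, which are what make the duality a theorem rather than a heuristic. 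Two smaller points: the shift is produced by the transform itself ($f^s\mapsto(f^*)^{-s-1}$, and one further application of \eqref{8} gives $-s-2$), not by a degree balance; and with the normalization \eqref{55} one has $\delta_{A_1}(f^s)=s\,f^s$, not $ns\,f^s$.
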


\begin{proof}
From \eqref{20}, \eqref{27}, and \eqref{17} we deduce that
\begin{align}\label{22}
Q_A=\delta_A-s\cdot\tr(A)&=\delta_A-s\cdot\delta_A(f)/f\in\Ann_{D_V[s]}f^s,\\
\nonumber Q_A^*=\delta_A^*+s\cdot\tr(A)&=\delta_A^*-s\cdot\delta_A^*(f^*)/f^*\in\Ann_{D_{V^*}[s]}(f^*)^s,
\end{align}
for $A\in\gg_D$ when identifying $A=d\rho_D(A)$.
So by \eqref{8} and \eqref{22}, we have 
\begin{equation}\label{23}
P=f^*(\p_x)\cdot f(x)-b_D(s)\in\Ann_{D_V[s]}f^s\ni Q_{A_k}=Q_k
\end{equation}
for $k=1,\dots,n$. 
Now let $\calD_V$ denote the sheaf of algebraic differential operators on $V$. 
Then, at any point $p\in U=V\backslash D$, we have  
\begin{equation}\label{84}
\Ann_{\calD_{V,p}[s]}f^s=\ideal{Q_1,\dots,Q_n}_{\calD_{V,p}[s]}.
\end{equation}
Indeed, as $f\in\calO_{V,p}^*$, conjugating by $f^s$ reduces this statement to $\Ann_{\calD_{V,p}[s]}1^s=\ideal{\delta_1,\dots,\delta_n}_{\calD_{V,p}[s]}$ which holds true since $\ideal{\delta_1,\dots,\delta_n}_{\calO_{V,p}}=\ideal{\p_{x_1},\dots,\p_{x_n}}_{\calO_{V,p}}$.
The equality \eqref{84} gives rise to an exact sequence 
\[
\xymat@C=48pt{0\ar[r]&{\mathcal R}\ar[r]&\calD_U[s]^n\ar[r]^-{(Q_1,\dots,Q_n)}&\Ann_{\calD_U[s]}f^s\ar[r]&0}.
\]
As $U$ is affine and the sheaf ${\mathcal R}$ is quasicoherent, we have $H^1(U,{\mathcal R})=0$ (see \cite[Prop.~2.5.4]{Meb89} and \cite[Thm.~3.5]{Har77}) and this implies with \eqref{23} that
\begin{equation}\label{25}
f^m\cdot P\in\ideal{Q_1,\dots,Q_n}_{D_V[s]},
\end{equation}
for some $m\in\NN$.
Let $\calF$ denote the Fourier transform.
Then, by \eqref{21}, \eqref{19}, and \eqref{22}, we have 
\begin{align}\label{72}
\calF(Q_A(s))
&=\calF(x^tA^t\p_x-s\cdot\tr(A))\\
\nonumber &=-\p_y^tA^ty-s\cdot\tr(A)\\
\nonumber &=-y^tA\p_y-(s+1)\cdot\tr(A)\\
\nonumber &=\delta_A^*+(-s-1)\cdot\tr(A)=Q_A^*(-s-1).
\end{align}
Combining \eqref{22}, \eqref{25}, and \eqref{72} yields that
\begin{align*}
\Ann_{D_{V^*}[s]}(f^*)^{-s-1}\ni\calF(f^m\cdot P)
&=\calF(f(x))^m\cdot\calF(f^*(\p_x)\cdot f(x)-b_D(s))\\
&=(-1)^{nm}f(\p_y)^m\cdot((-1)^n\cdot f^*(y)\cdot f(\p_y)-b_D(s))
\end{align*}
and hence, using \eqref{8} and \eqref{9},
\begin{align*}
0&=f(\p_y)^m\cdot((-1)^n\cdot f^*(y)\cdot f(\p_y)-b_D(s))\cdot(f^*)^{-s-1}\\
&=f(\p_y)^m\cdot((-1)^n\cdot b_{D^*}(-s-2)-b_D(s))\cdot(f^*)^{-s-1}\\
&=((-1)^n\cdot b_{D^*}(-s-2)-b_D(s))\cdot f(\p_y)^m\cdot(f^*)^{-s-1}\\
&=((-1)^n\cdot b_D(-s-2)-b_D(s))\cdot b_{D^*}(-s-2)\cdots b_{D^*}(-s-m-1)\cdot(f^*)^{-s-1}.
\end{align*}
Thus, $(-1)^n\cdot b_D(-s-2)-b_D(s)=0$ as claimed.
\end{proof}

We conclude the following from Lemma~\ref{24} using Theorem~\ref{56}, and from \eqref{74}, \eqref{34}, and \cite{Kas76}.

\begin{thm}\label{6}
For any reductive prehomogeneous determinant $D\subseteq V$ in dimension $\dim V=n$, the $b$-function $B_D(s)\equiv b_D(s)$ has degree $n$ and rational negative roots symmetric about $-1$.
In particular, $-1$ is the only integer root.
\end{thm}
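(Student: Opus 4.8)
The plan is to assemble the conclusion from the three pieces of machinery already in place: the symmetry Lemma~\ref{24}, the equality of structural invariants from Theorem~\ref{56}, and Kashiwara's negativity/rationality result \cite{Kas76}. The first task is to verify that all four hypotheses of Lemma~\ref{24} are satisfied for a reductive prehomogeneous determinant $D$. Conditions \eqref{27} and \eqref{17} are exactly the content of Theorem~\ref{56} and the displayed equalities following Proposition~\ref{62}, so these hold immediately once we know $D$ is reductive. For the functional equations \eqref{8} and \eqref{9}, I would invoke the discussion in the reductive setting at the start of Section~\ref{65}: the unitarian trick produces the dual relative invariant $f^*$ with $\chi_{f^*}=\chi_f^{-1}$, and \cite[Prop.~2.22--2.23]{Kim03} yield both functional equations together with $b_f=b_{f^*}$. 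I must also confirm the standing assumption $D^*\ne V^*$ of Lemma~\ref{24}; this follows from Proposition~\ref{62}, which guarantees $f^*\ne 0$ and hence $D^*\ne V^*$.

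With the hypotheses checked, Lemma~\ref{24} gives directly
\[
b_D(s)=(-1)^n\cdot b_D(-s-2),
\]
which is the symmetry of the roots about $-1$: if $\alpha$ is a root then so is $-\alpha-2$, and these two are reflections of each other through $-1$. The degree statement $\deg b_D=n$ is \eqref{74}, and the identification $B_D\equiv b_D$ of the global $b$-function with the relative-invariant $b$-function is \eqref{34}, both valid in the reductive case; here I would note that reductivity feeds into \eqref{34} through the regularity of the prehomogeneous vector space established via \cite[Prop.~2.24]{Kim03} and the Gy\"oja result \cite[Cor.~2.5.10]{Gyo91}.

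For the final sentence, I would combine the symmetry with Kashiwara's theorem \cite{Kas76}, which asserts that every root of $B_D$ is a negative rational number. An integer root $m$ would, by the symmetry, force $-m-2$ to be a root as well; since both must be negative we need $m<0$ and $-m-2<0$, i.e. $-2<m<0$, leaving only the single integer $m=-1$. Thus $-1$ is the only possible integer root, and it is in fact a root because the Euler vector field $A_1=E/n$ (via $\delta_1(f)=n f$, equivalently the Bernstein--Sato factor coming from homogeneity) always contributes the factor $s+1$ to $b_D(s)$. I expect no genuine obstacle in this argument; the only point demanding care is the bookkeeping that reductivity simultaneously supplies \emph{all} the hypotheses of Lemma~\ref{24} and legitimizes both \eqref{74} and \eqref{34}, so that the symmetry statement is actually about the global $b$-function $B_D$ rather than merely the formal $b_D$.
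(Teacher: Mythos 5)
Your proposal is correct and follows exactly the route the paper takes: its proof of Theorem~\ref{6} is precisely the combination of Lemma~\ref{24} (with hypotheses supplied by Theorem~\ref{56}, Proposition~\ref{62}, and \eqref{17}), the degree and identification statements \eqref{74} and \eqref{34}, and Kashiwara's rationality/negativity theorem \cite{Kas76}, which is what you assemble. Your added remark that $-1$ actually occurs as a root is a standard fact (substituting $s=-1$ into the functional equation shows $b_f(-1)=0$ for any nonunit $f$) and goes slightly beyond what the paper records, but it is consistent with it.
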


\begin{rmk}
In \cite[Prob.~3.2]{CU04}, Castro-Jim\'enez and Ucha-Enr\'\i quez ask whether $-1$ is the only integer root of the $b$-function for any linear free divisor.
Theorem~\ref{6} gives a positive answer to this question in the case of reductive prehomogeneous determinants.
In Theorem~\ref{37} in Section~\ref{26}, we shall also settle the case of certain nonreductive linear free divisors.
\end{rmk}

\section{Examples}\label{70}

Up to dimension $4$ all linear free divisors have been classified in \cite[\S6.4]{GMNS06}.
Figure~\ref{5} lists their $b$-functions computed using {\tt Macaulay 2} \cite{M2}.
Here $\gg_2$ is the non-Abelian Lie algebra of dimension $2$ and $\gg_3$ is characterized as having $2$-dimensional Abelian derived algebra $\gg'_3$, on which the adjoint action of a basis vector outside $\gg'_3$ is semi-simple with eigenvalues $1$ and $2$ (see \cite[Ch.~I, \S4]{Jac79}). 
Observe that, except for having degree greater than $n$, also the nonreductive divisors in Table~\ref{5} satisfy the statement of Theorem~\ref{6}.

\begin{longtable}{|c|c|c|c|c|}
\caption{$b$-functions of linear free divisors in dimension $n\le4$}\label{5}\\
\hline
$n$ & $f$ & $\gg_D$ & reductive? & $\Spec b(s)$ \\
\hline
\hline
$1$ & $x$ & $\CC$ & Yes & $-1$ \\
\hline
\hline
$2$ & $xy$ & $\CC^2$ & Yes & $-1,-1$ \\
\hline
\hline
$3$ & $xyz$ & $\CC^3$ & Yes & $-1,-1,-1$ \\
\hline
$3$& $(y^2+xz)z$ & $\bb_2$ & No & $-\frac{5}{4},-1,-1,-\frac{3}{4}$ \\
\hline
\hline
$4$ & $xyzw$ & $\CC^4$ & Yes & $-1,-1,-1,-1$ \\
\hline
$4$ & $(y^2+xz)zw$ & $\CC\oplus\bb_2$ & No & $-\frac{5}{4},-1,-1,-1,-\frac{3}{4}$ \\
\hline
$4$ & $(yz+xw)zw$ & $\CC^2\oplus\gg_2$ & No & $-\frac{4}{3},-1,-1,-1,-\frac{2}{3}$ \\
\hline
$4$ & $x(y^3-3xyz+3x^2w)$ & $\CC\oplus\gg_3$ & No & \begin{minipage}{3cm}\begin{center}$-\frac{7}{5},-\frac{4}{3},-\frac{6}{5},$ $-1,-1,-1,$ $-\frac{4}{5},-\frac{2}{3},-\frac{3}{5}$\end{center}\end{minipage} \\
\hline
$4$ & \begin{minipage}{3cm}\begin{center}$y^2z^2-4xz^3-4y^3w+18xyzw-27w^2x^2$\end{center}\end{minipage}& $\gl_2(\CC)$ & Yes & $-\frac{7}{6},-1,-1,-\frac{5}{6}$ \\
\hline
\end{longtable}

Discriminants of quiver representations are at the origin of linear free divisors and form a rich source of reductive determinants.
We shall briefly review the basics of the theory to understand the following three examples and refer to \cite{BM06} for more details.

A \emph{quiver} $Q=(Q_0,Q_1)$ is a directed finite graph with vertex set $Q_0$ and edge set $Q_1\subseteq Q_0\times Q_0$. 
To a dimension vector $\d=(d_i))\in\NN^{Q_0}$ we associate the \emph{representation space} 
\[
\Rep(Q,\d)=\prod_{(i,j)\in Q_1}\Hom(\CC^{d_i},\CC^{d_j}).
\]
The \emph{quiver group}  
\[
\GL(Q,\d)=\prod_{i\in Q_0}\GL_{d_i}(\CC)
\]
acts on $\Rep(Q,\d)$ through the \emph{quiver representation}
\begin{gather*}
\rho_{Q,\d}\colon\GL(Q,\d)\to\GL(\Rep(Q,\d)),\\
\rho_{Q,\d}((g_i)_{i\in Q_0})(\varphi_{(i,j)})_{(i,j)\in Q_1}=(g_j\circ\varphi_{(i,j)}\circ g_i^{-1})_{(i,j)\in Q_1}.
\end{gather*}
Note that $Z=\CC\cdot(I_{d_i})_{i\in Q_0}$, where $I_i\in\GL_i(\CC)$ denotes the unit matrix, lies in the kernel of $\rho_{Q,\d}$.
The \emph{Tits form} $q\colon\CC^{Q_0}\to\CC$ is defined by 
\[
q((x_i)_{i\in Q_0})=\sum_{i\in Q_0}x_i^2-\sum_{(i,j)\in Q_1}x_ix_j.
\]
The condition $\dim G=\dim V$ in Definition~\ref{58} for the discriminant $D$ of $(G,\rho,V)=(\GL(Q,\d)/Z,\rho_{Q,\d},\Rep(Q,\d))$ to define prehomogeneous determinant is equivalent to $q(\d)=1$ and reductivity is automatic if $D\ne V$ \cite[\S4]{GMNS06}. 

The following example of a linear free divisor is the discriminant of the star quiver studied in \cite[Exa.~5.3]{GMNS06}.
We use the microlocal calculus developed in \cite{SKO80} to determine its $b$-function and then {\tt Macaulay 2} \cite{M2} to verify our result.

\begin{exa}\label{66}
Consider the \emph{star quiver} $Q$ shown in Figure~\ref{76} with one sink, three sources, and dimension vector $\d=(2,1,1,1)$ with $q(\d)=1$.
\begin{figure}[h]
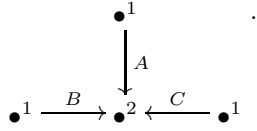

\caption{The star quiver in Example~\ref{66}}\label{76}
\[
\xymat{
&\bullet^1\ar[d]^A&\\
\bullet^1\ar[r]^B&\bullet^2&\bullet^1\ar[l]_C}.
\]
\end{figure}
The quiver group $G=\GL(Q,\d)=\GL_2(\CC)\times(\CC^*)^3\subseteq\GL_2(\CC)\times\GL_3(\CC)$ acts on the $6$-dimensional representation space $\Rep(Q,\d)=\CC^{2\times3}=\{(A\ B\ C)\}$ of $2\times3$-matrices through $\rho=\rho_{Q,\d}$ given by
\[
\rho(g)x=g_1\cdot x\cdot g_2^{-1},\quad g=(g_1,g_2).
\]
The infinitesimal vector fields for this group action are given by
\begin{equation}\label{52}
\xi_{i,j}=x_{i,1}\p_{j,1}+x_{i,2}\p_{j,2}+x_{i,3}\p_{j,3},\quad\xi_k=x_{1,k}\p_{1,k}+x_{2,k}\p_{2,k}
\end{equation}
where $x_{i,j}$ are coordinates on $\CC^{2\times3}$ and $\p_{i,j}=\p_{x_{i,j}}$.
The discriminant is a reductive linear free divisor $D$ defined by the product of maximal minors 
\[
f(x)=(x_{1,2}x_{2,3}-x_{2,2}x_{1,3})(x_{1,1}x_{2,3}-x_{2,1}x_{1,3})(x_{1,1}x_{2,2}-x_{2,1}x_{1,2})
\]
with associated character 
\begin{equation}\label{47}
\chi(g)=\det(g_1)^3\det(g_2)^{-2}.
\end{equation}

We first enumerate the different orbits by specifying a generic element $x_0$.
We denote by $O_{i,j}$ (or $O_i$ if there is only one) an orbit of dimension $i$ of a point $x_0^{i,j}$ (or $x_0^j$), and by $\Lambda_{i,j}$ (or $\Lambda_i$) its conormal space.
In order to calculate the dimension of the orbit of $x_0$ it is sufficient to determine the tangent space $T_{x_0}=d\rho(\gg)x_0$ which is spanned by the evaluation of the vector fields \eqref{52} at $x_0$.

\begin{asparaenum}

\item\label{50} Orbits of rank $2$:

\begin{asparaenum}[(\ref{50}.a)]
\item $1$ open orbit:
\[
x_0^6=\begin{pmatrix}
1 & 0 & 1 \\ 
0 & 1 & 1
\end{pmatrix}
\]
\item $3$ orbits of rank $2$ and dimension $5$ with all columns nonzero:
\[
x_0^{5,1}=
\begin{pmatrix}
1 & 0 & 0 \\ 
0 & 1 & 1
\end{pmatrix},\quad
x_0^{5,2}=
\begin{pmatrix}
0 & 1 & 0 \\ 
1 & 0 & 1
\end{pmatrix},\quad
x_0^{5,3}=
\begin{pmatrix}
0 & 0 & 1 \\ 
1 & 1 & 0
\end{pmatrix}.
\]

\item $3$ orbits of rank $2$ and dimension $4$ with exactly one zero column:
\[
x_0^{4,1}=
\begin{pmatrix}
0 & 1 & 0 \\ 
0 & 0 & 1
\end{pmatrix},\quad 
x_0^{4,2}=
\begin{pmatrix}
1 & 0 & 0 \\ 
0 & 0 & 1
\end{pmatrix},\quad 
x_0^{4,3}=
\begin{pmatrix}
1 & 0 & 0 \\ 
0 & 1 & 0
\end{pmatrix}. 
\]

\end{asparaenum}

\item\label{51} Orbits of rank $1$:

\begin{asparaenum}[(\ref{51}.a)]

\item $1$ orbits of rank $1$ and dimension $4$:
\[
x_0^{4,0}=
\begin{pmatrix}
1 & 1 & 1 \\ 
0 & 0 & 0
\end{pmatrix}
\]

\item $3$ orbits of rank $1$ and dimension $3$:
\[
x_0^{3,1}=
\begin{pmatrix}
0 & 1 & 1 \\ 
0 & 0 & 0
\end{pmatrix},\quad
x_0^{3,2}=
\begin{pmatrix}
1 & 0 & 1 \\ 
0 & 0 & 0
\end{pmatrix},\quad
x_0^{3,3}=
\begin{pmatrix}
1 & 1 & 0 \\ 
0 & 0 & 0
\end{pmatrix}.
\]

\item $3$ orbits of rank $1$ and dimension $2$:
\[
x_0^{2,1}=
\begin{pmatrix}
1 & 0 & 0 \\ 
0 & 0 & 0
\end{pmatrix},\quad
x_0^{2,2}=
\begin{pmatrix}
0 & 1 & 0 \\ 
0 & 0 & 0
\end{pmatrix},\quad
x_0^{2,3}=
\begin{pmatrix}
0 & 0 & 1 \\ 
0 & 0 & 0
\end{pmatrix}.
\]

\end{asparaenum}

\item $1$ Orbit of rank $0$: $x_0^0=0$.

\end{asparaenum}

For each of the above points $x_0=x_0^{i,j}$, we shall now compute the action $\rho_{x_0}$ of the isotropy group $G_{x_0}$ on the normal space $V_{x_0}=V/T_{x_0}$ to the tangent space $T_{x_0}$ to the orbit of $x_0$.
We omit the trivial cases of the open and of the zero orbit and describe $V_{x_0}$ by identifying it with a representative subspace in $V$.

\begin{asparaenum}

\item $x_0=x_0^{5,1}$:
\begin{gather*}
G_{x_0}=\left\{g=
\begin{pmatrix}
a & 0 \\ 
0 & 1
\end{pmatrix}
\times (a,1,1)
\right\},\quad
T_{x_0}=\ideal{\p_{1,1},\p_{2,1},\p_{2,2},\p_{2,3},\p_{1,2}+\p_{1,3}},\\
V_{x_0}=\left\{x=
\begin{pmatrix}
0 & 0 & u \\ 
0 & 0 & 0
\end{pmatrix}
\right\},\quad
\rho_{x_0}(g)x=a\cdot x.
\end{gather*}

\item $x_0=x_0^{4,3}$: 
\begin{gather*}
G_{x_0}=\left\{g=
\begin{pmatrix}
1 & 0 \\ 
0 & a
\end{pmatrix}
\times (1,a,b) 
\right\},\quad
T_{x_0}=\ideal{\p_{1,1},\p_{1,2},\p_{2,1},\p_{2,2}},\\
V_{x_0}=\left\{x=
\begin{pmatrix}
0 & 0 & u \\ 
0 & 0 & v
\end{pmatrix}
\right\},\quad
\rho_{x_0}(g)x=
\frac 1b
\begin{pmatrix}
0 & 0 & u\\
0 & 0 & av
\end{pmatrix}.
\end{gather*}

\item $x_0=x_0^{4,0}$:
\begin{gather*}
G_{x_0}=
\left\{g=
\begin{pmatrix}
1 & a \\ 
0 & b
\end{pmatrix}
\right\},\quad
T_{x_0}=\ideal{\p_{1,1},\p_{1,2},\p_{1,3},\p_{2,1}+\p_{2,2}+\p_{2,3}},\\
V_{x_0}=
\left\{x=
\begin{pmatrix}
0 & 0 & 0 \\ 
0 & u & v
\end{pmatrix}
\right\},\quad
\rho_{x_0}(g)x=b\cdot x.
\end{gather*}
This is the only nonprehomogeneous normal action.

\item $x_0=x_0^{3,3}$:
\begin{gather*}
G_{x_0}=
\left\{g=
\begin{pmatrix}
1 & a \\ 
0 & b
\end{pmatrix}
\times (1,1,c) 
\right\},\quad
T_{x_0}=\ideal{\p_{1,1},\p_{1,2},\p_{2,1}+\p_{2,2}},\\
V_{x_0}=
\left\{x=
\begin{pmatrix}
0 & 0 & u \\ 
0 & v & w
\end{pmatrix}
\right\},\quad
\rho_{x_0}(g)x=
\begin{pmatrix}
0&\frac{u+aw}{c}\\ 
bv&b\frac{w}{c}
\end{pmatrix}.
\end{gather*}
This is a prehomogeneous action with nonreduced discriminant $\{vw^2=0\}$.

\item $x_0=O_{2,1}$:
\begin{gather*}
G_{x_0}=
\left\{g=
\begin{pmatrix}
1 & a \\
0 & b
\end{pmatrix}
\times (1,c,d) 
\right\},\quad
T_{x_0}=\ideal{\p_{11},\p_{21}},\\
V_{x_0}=
\left\{x=
\begin{pmatrix}
0 & r & s \\ 
0 & t & u
\end{pmatrix}
\right\},\quad
\rho_{x_0}(g)x=
\begin{pmatrix}
0 & \frac{r+at}{c} & \frac{s+au}{d} \\
0 & \frac{bt}{c} & \frac{bu}{d} 
\end{pmatrix}
\end{gather*}
This is a prehomogeneous action with reduced discriminant $\{wr(ur-vw)=0\}$

\end{asparaenum}

The holonomy diagram in Figure~\ref{49} shows the incidence relations between the good orbit conormals subject to intersections in codimension one. 
Recall that $\Lambda_{4,0}$ is not good as it is not prehomogeneous.
\begin{figure}[h]
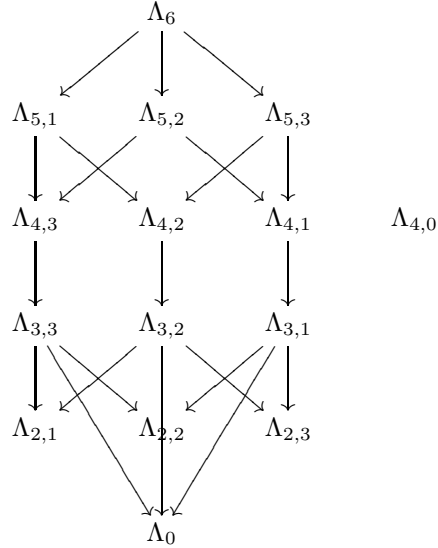

\caption{Holonomy diagram of Example~\ref{66}}\label{49}
\[
\xymat{
&\Lambda_{6}\ar[dl] \ar[d] \ar[dr]& & \\
\Lambda_{5,1}\ar[d]\ar[dr]&\Lambda_{5,2}\ar[dl]\ar[dr]&\Lambda_{5,3}\ar[dl]\ar[d] &\\
\Lambda_{4,3}\ar[d]&\Lambda_{4,2}\ar[d]&\Lambda_{4,1}\ar[d] & \Lambda_{4,0} \\ 
\Lambda_{3,3}\ar[d]\ar[dr]\ar[ddr]&\Lambda_{3,2}\ar[dl]\ar[dr]\ar[dd] & \Lambda_{3,1}\ar[dl]\ar[d] \ar[ddl] & \\
\Lambda_{2,1}&\Lambda_{2,2}& \Lambda_{2,3} & \\
&\Lambda_0 & &  
}
\]
\end{figure}
We shall apply the method in \cite{SKO80} to the chain
\begin{equation}\label{54}
\xymat{
\Lambda_6\ar[r]& \Lambda_{5,1}\ar[r]&\Lambda_{4,3}\ar[r]&\Lambda_{3,3}\ar[r]&\Lambda_0.
}
\end{equation}
An elementary local calculation shows that all the intersections in this chain are smooth and transversal at the generic point which is the situation of \cite[Cor.~7.6]{SKO80}. 
Therefore it remains to calculate $\ord_{\Lambda}f^s$ for all $\Lambda$ in the chain \eqref{54} using the following formula from \cite[Prop.~4.14]{SKO80}:
\begin{equation}\label{48}
\ord_{\Lambda}{f^s}=-m_{\Lambda}s-{\frac{\mu_{\Lambda}}{2}}=d\chi(A_0)-\tr_{V_{x_0}^*}d\rho^*_{x_0}(A_0)+\frac12\dim V_{x_0}^*.
\end{equation}
Here $A_0=(A_0^1,A_0^2)\in\gg_{x_0}$ and $(x_0,y_0)\in\Lambda$ is generic such that $d\rho_{x_0}(A_0)y_0=y_0$.
By \eqref{47}, we have
\[
d\chi(A_0)=3\cdot\tr(A_0^1)-2\cdot\tr(A_0^2),\quad\tr_{V_{x_0}^*}d\rho^*_{x_0}(A_0)=-\tr_{V_{x_0}}d\rho_{x_0}(A_0)
\]
which can be determined from the preceding computations.
Note that the contragredient action $\rho^*$ on the dual space $(\CC^{2\times3})^*$ identified with $\CC^{3\times2}$ by the trace pairing is given by
\[
d\rho^*(g)\xi={g_2}\xi{g_1}^{-1},\quad g=(g_1,g_2).
\]
One computes
\begin{gather*}
\ord_{\Lambda_6}f^s=0,\quad
\ord_{\Lambda_{5,1}}f^s=-s-\frac12,\quad
\ord_{\Lambda_{4,3}}f^s=-2s-1,\\
\ord_{\Lambda_{3,3}}f^s=-5s-\frac52,\quad
\ord_{\Lambda_0}f^s=-6s-3 
\end{gather*}
Using the formula in \cite[Cor.~7.6]{SKO80} this yields
\begin{gather*}
\frac{b_{\Lambda_{5,1}}}{b_{\Lambda_6}}=s+1,\quad
\frac{b_{\Lambda_{4,3}}}{b_{\Lambda_{5,1}}}=-s-\frac12+2s+1+\frac12=s+1,\\
\frac{b_{\Lambda_{3,3}}}{b_{\Lambda_{4,3}}}=(3s+2)(3s+3)(3s+4),\quad
\frac{b_{\Lambda _0}}{b_{\Lambda _{3,3}}}=-5s-\frac52+6s+3+\frac12=s+1.
\end{gather*}
From this we conclude that
\[
B_D(s)=b_{\Lambda_0}(s)=\left(s+\frac23\right)(s+1)^4\left(s+\frac43\right)
\]
is the $b$-function of $D$ which is also confirmed by {\tt Macaulay 2} \cite{M2}.
\end{exa}

The next two examples are reductive nonreduced prehomogeneous determinants arising from quiver representations. 

\begin{exa}\label{67}
We modify the star quiver in Example~\ref{66} by adding an additional source.
Consider the resulting star quiver $\wt D_3$ in Figure~\ref{77} and the dimension vector $\d=(2,2,1,1,1)$ with $q(\d)=1$.
\begin{figure}[h]
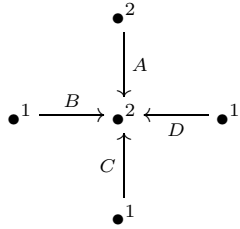

\caption{The star quiver in Example~\ref{67}}\label{77}
\[
\xymat{
&\bullet^2\ar[d]^A\\
\bullet^1\ar[r]^B&\bullet^2&\bullet^1\ar[l]^D\\
&\bullet^1\ar[u]^C}
\]
\end{figure}
The quiver group $\GL(Q,\d)=\GL_2(\CC)\times\GL_2(\CC)\times(\CC^*)^3$ acts on a $10$-dimensional representation space $\Rep(Q,\d)=\CC^{2\times2}\times\CC^{3\times2}=\{(A,(B\ C\ D))\}$ and the orbit of
$\left(
\begin{pmatrix}1&0\\0&1\end{pmatrix},
\begin{pmatrix}1&0&1\\0&1&1\end{pmatrix}
\right)$
is open.
The nonreduced discriminant $D$ defined by the function
\[
f(x)=(x_{1,1}x_{2,2}-x_{1,2}x_{2,1})^2\cdot(x_{1,3}x_{2,4}-x_{1,4}x_{2,3})\cdot(x_{1,3}x_{2,5}-x_{1,5}x_{2,3})\cdot(x_{1,4}x_{2,5}-x_{1,5}x_{2,4})
\]
is a reductive prehomogeneous determinant.
It is a union $D=D_1\cup D_2$ where $D_1$ and $D_2$ are defined by the functions
\begin{align}\label{75}
f_1(x)&=(x_{1,1}x_{2,2}-x_{1,2}x_{2,1})^2,\\
\nonumber f_2(x)&=(x_{1,3}x_{2,4}-x_{1,4}x_{2,3})\cdot(x_{1,3}x_{2,5}-x_{1,5}x_{2,3})\cdot(x_{1,4}x_{2,5}-x_{1,5}x_{2,4}).
\end{align}
Since $D_1$ is the square of a quadratic form and $D_2$ is the reductive linear free divisor from Example~\ref{66}, we have
\begin{align*}
B_{D_1}(s)&=(2s+1)(s+1)^2(2s+3)\\
B_{D_2}(s)&=\left(s+\frac23\right)(s+1)^4\left(s+\frac43\right).
\end{align*}
As the functions in \eqref{75} depend on separate sets of variables, $B_D(s)$ divides $B_{D_1}(s)\cdot B_{D_2}(s)$ and hence 
\[
B_D=B_{D_1}\cdot B_{D_2}=\left(s+\frac12\right)\left(s+\frac23\right)(s+1)^6\left(s+\frac43\right)\left(s+\frac32\right)
\]
as $\deg B_D=10=\deg B_{D_1}\cdot\deg B_{D_2}$ by Theorem~\ref{6}.
For the reduced discriminant $D_\red$, we loose the symmetry property.
Indeed, one computes
\[
B_{D_\red}(s)=\left(s+\frac23\right)(s+1)^5\left(s+\frac43\right)\left(s+2\right)
\]
using the same degree argument.
\end{exa}

\begin{exa}\label{68}
Consider the quiver representation associated to the quiver $\wt A_n$ in Figure~\ref{78} and the dimension vector $\d=(2,1,\dots,1)$ with $q(\d)=1$.
\begin{figure}
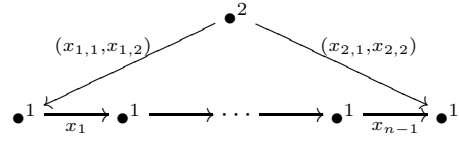

\caption{The quiver in Example~\ref{68}}\label{78}
\[
\xymat{
&&\bullet^2\ar[dll]_-{(x_{1,1},x_{1,2})}\ar[drr]^-{(x_{2,1},x_{2,2})}&&\\
\bullet^1\ar[r]_{x_1}&\bullet^1\ar[r]&\cdots\ar[r]&\bullet^1\ar[r]_{x_{n-1}}&\bullet^1}
\]
\end{figure}
The representation space is $(n+3)$-dimensional and the orbit of 
\[
\left(\begin{pmatrix}x_{1,1}&x_{1,2}\\x_{2,1}&x_{2,2}\end{pmatrix},(x_1,\dots,x_{n-1})\right)=
\left(\begin{pmatrix}1&0\\0&1\end{pmatrix},(1,\dots,1)\right)
\]
is open.
The nonreduced discriminant $D$ defined by the function
\[
f(x)=\det\begin{pmatrix}x_{1,1}&x_{1,2}\\x_{2,1}&x_{2,2}\end{pmatrix}^2\cdot x_1\cdots x_{n-1}
=(x_{1,1}x_{2,2}-x_{1,2}x_{2,1})^2\cdot x_1\cdots x_{n-1}
\]
is a reductive prehomogeneous determinant.
As in Example \ref{67}, one computes
\begin{align*}
B_D(s)&=\left(s+\frac12\right)(s+1)^{n+1}\left(s+\frac32\right),\\
B_{D_\red}(s)&=(s+1)^n\left(s+2\right), 
\end{align*}
and the reduced discriminant does not have the symmetry property.
\end{exa}

\section{$b$-functions of special linear free divisors}\label{26}

The symmetry of the $b$-functions of nonreductive prehomogeneous determinants in Section~\ref{70} motivates to try to weaken the hypotheses of Theorem~\ref{6}.
In the case of linear free divisors, this turns out to be possible.

Consider a linear free divisor $D$ defined by $f$ as in Definition~\ref{58}.
In \cite[Thm.~6.1]{GMNS06}, we proved that $\aa_D$ has a basis $A_2,\dots,A_n=S_2,\dots,S_s,N_1,\dots,N_{n-r}$ as in the proof of Theorem~\ref{56}:
the $d\rho_D(S_i)$ are semisimple and commute, the $d\rho_D(N_i)$ are nilpotent, and the $N_i$ are homogeneous with respect to the $S_i$ with rational weight. 
Then, as in \eqref{79}, we have 
\begin{equation}\label{29}
d\chi_f(A_i)=\tr\circ d\rho_D(A_i)+\sum_{j=1}^n[A_i,A_j]/A_j,\quad i=1,\dots,n.
\end{equation}

Assume now that $D^*\ne V^*$.
Then the analog of \eqref{29} for $D^*$ reads
\begin{equation}\label{30}
d\chi_{f^*}(A_i)=\tr\circ d\rho_D^*(A_i)+\sum_{j=1}^n[A_i,A_j]/A_j,\quad i=1,\dots,n.
\end{equation}
As $\tr\circ d\rho_D^*(A_i)=-\tr\circ d\rho_D(A_i)$, the difference of \eqref{29} and \eqref{30} is
\begin{equation}\label{39}
d\chi_f(A_i)-d\chi_{f^*}(A_i)=2\tr\circ d\rho_D(A_i),\quad i=1,\dots,n.
\end{equation}
Using \eqref{1}, we conclude from \eqref{39} the following formulas.

\begin{lem}\label{31}
For a linear free divisor $D$ with $D^*\ne V^*$,
\[\pushQED{\qed}
\chi_f\cdot\chi_{f^*}^{-1}={\det}^2\circ\rho_D,\quad d\chi_f-d\chi_{f^*}=2\tr\circ d\rho_D.\qedhere
\]
\end{lem}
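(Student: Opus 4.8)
The plan is to read off both formulas directly from equation \eqref{39}, which has already done the essential work; the two statements in Lemma~\ref{31} are simply the infinitesimal and the group-level versions of the same identity. The second formula, $d\chi_f-d\chi_{f^*}=2\tr\circ d\rho_D$, is nothing but \eqref{39} read as an equality of linear functionals on $\gg_D$: since $A_1,\dots,A_n$ form a basis of $\gg_D$ (here $A_1=E/n$ together with the basis $A_2,\dots,A_n$ of $\aa_D$ described at the start of the section), and \eqref{39} asserts equality on each $A_i$, the two sides agree on all of $\gg_D$. One small point to verify is that \eqref{39} genuinely holds for the index $i=1$ as well as for $i=2,\dots,n$; this follows because \eqref{29} and \eqref{30} are stated for $i=1,\dots,n$, so their difference \eqref{39} is too.

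First I would exponentiate the infinitesimal identity to obtain the group-level formula $\chi_f\cdot\chi_{f^*}^{-1}=\det^2\circ\rho_D$. The point is that both sides are algebraic characters of the connected group $G_D$, and a character of a connected group is determined by its derivative at the identity. The derivative of $\chi_f\cdot\chi_{f^*}^{-1}$ is $d\chi_f-d\chi_{f^*}$, while the derivative of $\det^2\circ\rho_D$ is $2\,d(\det\circ\rho_D)=2\tr\circ d\rho_D$, using that $d\det=\tr$ as already invoked in the proof of Lemma~\ref{33}. By the second formula these derivatives coincide, so the two characters have equal differentials and hence are equal. This is exactly the passage from $d\chi_D=\tr\circ d\rho_D$ to $\chi_D=\det\circ\rho_D$ carried out in Lemma~\ref{33}, applied now to the character $\chi_f\cdot\chi_{f^*}^{-1}$ and the representation $\det^2\circ\rho_D$.

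The one genuine subtlety, and the step I would flag as the main obstacle, is the appeal to $\eqref{39}$ itself: it rests on the decomposition of $\aa_D$ into semisimple and nilpotent parts from \cite[Thm.~6.1]{GMNS06} and on the validity of the trace formulas \eqref{29} and \eqref{30}, which are specific to linear free divisors and require the hypothesis $D^*\ne V^*$ so that $D^*$ is itself a linear free divisor (Proposition~\ref{62} and Definition~\ref{60}). Once \eqref{39} is in hand the proof is immediate, so the substance of Lemma~\ref{31} is really the bookkeeping already completed in \eqref{29}--\eqref{39}; the lemma itself is a one-line consequence, which is presumably why the authors suppress the argument with \texttt{pushQED}. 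I would therefore present the proof as: \emph{The second identity is \eqref{39}, read as an equality of linear forms on the basis $A_1,\dots,A_n$ of $\gg_D$. The first follows by integrating, since $G_D$ is connected and both $\chi_f\chi_{f^*}^{-1}$ and $\det^2\circ\rho_D$ are characters with the common differential $2\tr\circ d\rho_D$.}
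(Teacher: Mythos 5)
Your proof is correct and follows essentially the same route as the paper: the lemma is read off directly from \eqref{39}, which holds on the basis $A_1,\dots,A_n$ of $\gg_D$ and hence as an identity of linear forms, and the group-level formula is then obtained by integrating. The only cosmetic difference is that where you invoke connectedness of $G_D$ (a character being determined by its differential), the paper instead cites the degree equality \eqref{1} to pin down the two characters in the style of Lemma~\ref{33}; the two devices are interchangeable here.
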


As an immediate consequence of \eqref{1} and Lemma~\ref{31}, we obtain an equivalence of \eqref{27} and \eqref{17}.

\begin{prp}\label{35}
For a linear free divisor $D$ with $D^*\ne V^*$, we have $\chi_D=\det\circ\rho_D$ if and only if $\chi_{f^*}=\chi_f^{-1}$.\qed
\end{prp}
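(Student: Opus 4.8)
The plan is to read off the equivalence directly from the two formulas supplied by Lemma~\ref{31}, using \eqref{1} to convert an equality of derivatives of characters into an equality of the characters themselves. First I would recall that for a linear free divisor with $D^*\ne V^*$, Lemma~\ref{31} gives
\[
d\chi_f-d\chi_{f^*}=2\,\tr\circ d\rho_D.
\]
The condition $\chi_D=\det\circ\rho_D$ is, by passing to derivatives (and because $d\det=\tr$), the statement $d\chi_f=\tr\circ d\rho_D$ on $\gg_D$; conversely, $\chi_{f^*}=\chi_f^{-1}$ amounts at the infinitesimal level to $d\chi_{f^*}=-d\chi_f$. The strategy is therefore to show that each of these two infinitesimal conditions is equivalent to $d\chi_f=\tr\circ d\rho_D$, via the single displayed relation above, and then to lift the infinitesimal equalities back to the group using the fact that $G_D/A_D\cong\CC^*$ has character group $\ZZ$, exactly as in the proof of Lemma~\ref{33}.

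Carrying this out, I would argue in two directions. If $\chi_D=\det\circ\rho_D$, then $d\chi_f=\tr\circ d\rho_D$, and substituting into Lemma~\ref{31} yields $d\chi_{f^*}=d\chi_f-2\,\tr\circ d\rho_D=-d\chi_f$, which is the derivative form of $\chi_{f^*}=\chi_f^{-1}$. Conversely, if $\chi_{f^*}=\chi_f^{-1}$, then $d\chi_{f^*}=-d\chi_f$, and Lemma~\ref{31} gives $2\,d\chi_f=d\chi_f-d\chi_{f^*}=2\,\tr\circ d\rho_D$, whence $d\chi_f=\tr\circ d\rho_D$, i.e.\ $\tr\circ d\rho_D=0$ on $\aa_D=\ker d\chi_f$. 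At this point Lemma~\ref{33} applies verbatim and delivers $\chi_D=\det\circ\rho_D$.

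The only genuine subtlety—and the one step that deserves care rather than routine algebra—is the passage from the equality of Lie-algebra characters to the equality of the group characters, in both directions. For the forward direction this is handled by Lemma~\ref{33}; for the statement $\chi_{f^*}=\chi_f^{-1}$ one must observe that $\chi_f$ and $\chi_{f^*}$ both factor through $G_D/A_D\cong\CC^*$ (using $[\gg_D,\gg_D]\subseteq\aa_D$ from \eqref{82}), so that agreement of their derivatives forces $\chi_{f^*}=\chi_f^{-1}$ on the connected group $G_D$. I expect no real obstacle here, since the paper has already set up precisely this $\ZZ$-character-group argument in \eqref{83} and Lemma~\ref{33}; indeed the cleanest writeup simply cites Lemma~\ref{33} for the nontrivial half and notes that the other implication follows immediately by substitution into Lemma~\ref{31} together with \eqref{1}.
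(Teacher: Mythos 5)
Your proof is correct and follows essentially the same route as the paper, which states the proposition as an immediate consequence of \eqref{1} and Lemma~\ref{31} (with no further argument): you substitute into the differential identity $d\chi_f-d\chi_{f^*}=2\tr\circ d\rho_D$ in both directions and handle the Lie-algebra-to-group passage via the $G_D/A_D\cong\CC^*$ character argument and Lemma~\ref{33}, exactly the machinery the paper intends. The only quibble is that your parenthetical appeal to \eqref{82} is not what makes $\chi_{f^*}\circ\rho_D^*$ factor through $G_D/A_D$ --- what does is that its derivative $-d\chi_f$ vanishes on $\aa_D$ and $A_D$ is connected, a fact you already have in hand at that point, so the proof stands as written.
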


A relation of \eqref{8}, \eqref{9}, and \eqref{17}, can be established for general prehomogeneous determinants.

\begin{prp}\label{69}
Let $D$ be a prehomogeneous determinant with $D^*\ne V^*$.
Then \eqref{8} is equivalent to \eqref{17} and implies \eqref{9}.
\end{prp}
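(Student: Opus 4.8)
The plan is to prove the two directions of the equivalence by transporting the functional equation through the group action, and then to deduce \eqref{9} from \eqref{8} by an apolarity argument. Throughout I write $b_D=b_f$ and $b_{D^*}=b_{f^*}$.

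For the implication \eqref{8}$\Rightarrow$\eqref{17} I would exploit the relative invariance of $f$ and of $f^*$. Substituting $x\mapsto\rho_D(g)x$ into \eqref{8} and using \eqref{20} for $f$, the chain rule $\p_x=\rho_D^*(g)\p_u$ with $\rho_D^*(g)=(\rho_D(g)^{-1})^t$, and the relative invariance $f^*(\rho_D^*(g)\p_u)=\chi_{D^*}(\rho_D^*(g))\cdot f^*(\p_u)$ of $f^*$ for the dual action, both sides acquire explicit character factors. Cancelling the nonzero polynomial $b_D(s)$ and the common power $f^s$ leaves $\chi_{D^*}(\rho_D^*(g))\cdot\chi_D(g)^{s+1}=\chi_D(g)^s$ for all $g\in G_D$ and all $s$, which forces $\chi_{D^*}\circ\rho_D^*=\chi_D^{-1}$; differentiation gives the infinitesimal form. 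This is exactly \eqref{17}. For the converse \eqref{17}$\Rightarrow$\eqref{8} I would run the same computation in reverse: setting $R(x,s)=f^*(\p_x)f^{s+1}$, the transformation law together with \eqref{17} yields $R(\rho_D(g)x,s)=\chi_D(g)^s\cdot R(x,s)$, the same semi-invariance as $f^s$. Hence $R/f^s$ is a $G_D$-invariant rational function, regular on the dense orbit $V\backslash D$, so it is constant in $x$; call it $b(s)$. To see that $b(s)$ is polynomial in $s$ I would expand $R(x,s)=f^{s+1-n}\cdot G(x,s)$ by the Leibniz rule, where $G$ is polynomial in $s$ of degree at most $n$, and then evaluate the identity $b(s)=G(x_0,s)\cdot f(x_0)^{1-n}$ at a point $x_0\notin D$. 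This establishes \eqref{8} with $b_D=b$.

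Finally, for \eqref{8}$\Rightarrow$\eqref{9} I would first observe that \eqref{17} is self-dual: since $(D^*)^*=D$ and $\rho_{D^*}^*=\rho_D$, the relation $\chi_{D^*}\circ\rho_D^*=\chi_D^{-1}$ is equivalent to its analog for $D^*$. Thus \eqref{8} gives \eqref{17}, which by the converse direction applied to $D^*$ gives the dual functional equation (the analog of \eqref{8} for $f^*$), so that $b_{D^*}(s)$ is defined. Now set $c_m=(f^*)^m(\p_x)f^m$. A direct monomial computation shows the apolarity symmetry $c_m=f^m(\p_y)(f^*)^m$. Applying $(f^*)^m(\p_x)$ to \eqref{8} at $s=m$ yields $c_{m+1}=b_D(m)\cdot c_m$, while applying $f^m(\p_y)$ to the dual equation yields $c_{m+1}=b_{D^*}(m)\cdot c_m$; hence $(b_D(m)-b_{D^*}(m))\cdot c_m=0$ for all $m\in\NN$. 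Since $c_m=\prod_{k=0}^{m-1}b_D(k)$ is nonzero — equivalently $b_D$ has no nonnegative root, its roots being negative by \cite{Kas76} — the polynomials $b_D$ and $b_{D^*}$ agree at every $m\in\NN$ and therefore coincide, which is \eqref{9}.

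I expect the main obstacle to lie in the two finer points rather than in the group-theoretic transport. In \eqref{17}$\Rightarrow$\eqref{8} one must argue that the $x$-independent ratio $R/f^s$ is genuinely polynomial, and not merely rational, in $s$; this is what the Leibniz expansion and the evaluation at $x_0$ are for. In \eqref{8}$\Rightarrow$\eqref{9} the delicate step is the nonvanishing of the apolarity constants $c_m$, since it is precisely this nonvanishing that promotes the equality of $b_D$ and $b_{D^*}$ at the integers to the identity of polynomials.
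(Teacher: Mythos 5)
Your argument follows the same route as the paper's proof. The equivalence of \eqref{8} and \eqref{17} is exactly the character argument of \cite[Prop.~2.22]{Kim03}, which the paper invokes by citation and you write out: relative invariance transports the functional equation, invariance of the ratio $R/f^s$ on the dense orbit forces it to be constant in $x$, and the Leibniz expansion gives polynomiality in $s$. Likewise your constants $c_m=(f^*)^m(\p_x)f^m$, the symmetry $c_m=f^m(\p_y)(f^*)^m$ of the apolar pairing, and the recursions $c_{m+1}=b_D(m)\cdot c_m=b_{D^*}(m)\cdot c_m$ reproduce the paper's identity $b_D(0)\cdots b_D(m-1)=f^*(\p_x)^m f(x)^m=f(\p_y)^m(f^*)^m(y)=b_{D^*}(0)\cdots b_{D^*}(m-1)$; your preliminary observation that \eqref{17} is self-dual, so that the dual equation and $b_{D^*}$ exist, is also needed and correct.

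The gap is in the nonvanishing of the $c_m$, which you justify by asserting that the roots of $b_D$ are negative ``by \cite{Kas76}''. Kashiwara's theorem bounds the roots of the Bernstein--Sato polynomial $B_D$, not of the polynomial $b_D$ in the explicit equation \eqref{8}. From $b_D(s)\cdot f^s=f^*(\p_x)\cdot f^{s+1}\in D_V[s]\cdot f^{s+1}$ one gets only $B_D\mid b_D$, and nothing controls the complementary factor: the identification $b_D=B_D$ of \eqref{34} is available only for reductive prehomogeneous determinants (Gyoja, \cite[Cor.~2.5.10]{Gyo91}) or under the regularity hypothesis of Theorem~\ref{37} --- precisely the hypotheses that Proposition~\ref{69} does not assume. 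Indeed, if Kashiwara's theorem applied to $b_D$ at this level of generality, the regularity assumption in Theorem~\ref{37}, whose whole purpose is to secure $b_D=B_D$ and hence negative rational roots, would be superfluous. So your final division step is unsupported: equality of all the products $\prod_{k<m}b_D(k)=\prod_{k<m}b_{D^*}(k)$ does not by itself force $b_D=b_{D^*}$ once some $b_D(k)$ with $k\in\NN$ vanishes. For comparison, in the source of this argument, \cite[Prop.~2.23]{Kim03}, the nonvanishing comes from positivity: reductivity gives $f_\alpha=\ol{f^*_\alpha}$ in unitary coordinates, whence $c_m=\sum_\alpha|(f^m)_\alpha|^2\,\alpha!>0$. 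That positivity is the one ingredient of the classical proof that does not transfer to the present generality, and your appeal to \cite{Kas76} does not replace it.
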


\begin{proof}
The classical proof essentially works in our setting:

The existence of $b_f(s)$ and $b_{f^*}$ satisfying \eqref{8} follows from \eqref{17} by the character argument in \cite[Prop.~2.22]{Kim03} which also gives the converse implication.

In the reductive case,
\[
f(x)=\sum_{|\alpha|=n}f_\alpha\cdot x^\alpha,\quad f^*(y)=\sum_{|\alpha|=n}f_\alpha\cdot x^\alpha
\]
have complex conjugate coefficients $f_\alpha=\ol{f_\alpha^*}$ in unitary coordinates. 
This is not necessary to show that
\[
b_f(0)=f^*(\p_x)\cdot f(x)=\sum_{|\alpha|=n}f_\alpha\cdot f^*_\alpha\cdot\alpha!=f(\p_y)\cdot f^*(y)=b_{f^*}(0)
\]
and similarly 
\[
b_f(0)\cdots b_f(m-1)=f^*(\p_x)^m\cdot f(x)^m=f(\p_y)^m\cdot f^*(y)^m=b_{f^*}(0)\cdots b_{f^*}(m-1)
\]
for any $m\in\NN$.
The equality in \eqref{9} follows as in \cite[Prop.~2.23]{Kim03}.
\end{proof}

In \cite[Def.~2.1]{GMS08}, a linear free divisor is called \emph{special} if \eqref{27} holds true.
Using this terminology, we conclude from Lemma~\ref{24} and Propositions~\ref{35} and \ref{39} the following result analogous to Theorem~\ref{6}.

\begin{thm}\label{36}
Let $D\subseteq V$ be a special linear free divisor in dimension $\dim V=n$ with $D^*\ne V^*$.
Then the $b$-function $b_D(s)$ has degree at most $n$ and roots symmetric about $-1$.\qed
\end{thm}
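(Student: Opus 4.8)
The plan is to deduce everything from Lemma~\ref{24} by verifying its four hypotheses \eqref{8}, \eqref{9}, \eqref{27}, and \eqref{17} in turn for a special linear free divisor $D$ with $D^*\ne V^*$. By definition, being \emph{special} means that $D$ satisfies \eqref{27}. Since $D$ is a linear free divisor with $D^*\ne V^*$, Proposition~\ref{35} then converts \eqref{27} into the equivalent statement \eqref{17}. Finally, Proposition~\ref{69} shows that \eqref{17} is equivalent to the functional equation \eqref{8}, which in turn defines $b_D(s)$ and implies \eqref{9}. Chaining these implications supplies all four hypotheses, so I would invoke Lemma~\ref{24} to obtain
\[
b_D(s)=(-1)^n\cdot b_D(-s-2).
\]

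From this functional equation the symmetry of the roots is immediate. If $\rho$ is a root of $b_D(s)$, then $b_D(-\rho-2)=0$ as well, so $-\rho-2$ is a root of the same multiplicity; since $\rho$ and $-\rho-2$ have midpoint $-1$, the multiset of roots is invariant under reflection about $-1$, which is exactly the claimed symmetry. For the degree bound I would argue directly from \eqref{8}: as $D$ is a linear free divisor, $f$ and hence $f^*$ are homogeneous of degree $n$, so $f^*(\p_x)$ is a differential operator of order $n$; applying an operator of order $\le n$ to $f^{s+1}$ and dividing by $f^s$ yields a polynomial in $s$ of degree $\le n$, whence $\deg b_D(s)\le n$.

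The place where this argument genuinely departs from the reductive case of Theorem~\ref{6} is precisely this last bound, and that is the point I expect to require the most care. In the reductive setting, regularity and nondegeneracy supply \eqref{34} and \eqref{74}, giving $B_D\equiv b_D$ of degree exactly $n$. Here I can only appeal to the existence half of Proposition~\ref{69}, so I must phrase the whole statement in terms of the $b$-function $b_D(s)$ defined by \eqref{8} rather than the Bernstein--Sato polynomial $B_D(s)$, and accept a degree bound in place of an equality. The real content is thus ensuring that $b_D(s)$ is even defined in this nonreductive setting, i.e.\ that \eqref{8} holds at all; this is exactly what Proposition~\ref{69} provides through the character argument of \cite[Prop.~2.22]{Kim03}, so once that proposition is in hand the remainder of the proof is a formal assembly of the cited implications.
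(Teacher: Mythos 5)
Your proposal is correct and follows essentially the same route as the paper: specialness gives \eqref{27}, Proposition~\ref{35} converts this to \eqref{17}, Proposition~\ref{69} yields \eqref{8} and \eqref{9}, and Lemma~\ref{24} then delivers the functional equation $b_D(s)=(-1)^n\cdot b_D(-s-2)$ and hence the symmetry of the roots about $-1$. Your explicit order-$n$ argument for the degree bound and the remark that one must work with $b_D(s)$ rather than $B_D(s)$ in this nonreductive setting are exactly the points the paper leaves implicit, and both are sound.
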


Under the stronger assumption of regularity we can prove more. 

\begin{thm}\label{37}
Let $D\subseteq V$ be a regular special linear free divisor in dimension $\dim V=n$.
Then the $b$-function $B_D(s)\equiv b_D(s)$ has degree at most $n$ and negative rational roots symmetric about $-1$.
In particular, $-1$ is the only integer root.
\end{thm}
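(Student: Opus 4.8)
The plan is to reduce the assertion to Theorem~\ref{36}, using regularity to supply the two ingredients that were previously guaranteed by reductivity: the properness $D^*\ne V^*$ of the dual determinant, and the coincidence of the functional $b$-function with the Bernstein--Sato polynomial. First I would note that, by the theory of regular prehomogeneous vector spaces, regularity of $(G_D,\rho_D,V)$ forces the dual relative invariant $f^*$ of Definition~\ref{60} to be nonzero, so that the dual determinant satisfies $D^*\ne V^*$. This is exactly the hypothesis that was automatic in the reductive setting of Proposition~\ref{62}. Since $D$ is moreover special, \eqref{27} holds by definition, so Theorem~\ref{36} applies directly and yields that $b_D(s)$ has degree at most $n$ and roots symmetric about $-1$; concretely, behind Theorem~\ref{36} stand Proposition~\ref{35} (turning \eqref{27} into \eqref{17}), Proposition~\ref{69} (turning \eqref{17} into \eqref{8} and \eqref{9}), and Lemma~\ref{24} (producing the relation $b_D(s)=(-1)^n\cdot b_D(-s-2)$).

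It then remains to identify $b_D$ with the Bernstein--Sato polynomial and to locate the roots. In the reductive case the identity $B_D\equiv b_D$ was the content of \eqref{34}, established by Gyoja; the general form of that comparison, \cite[Cor.~2.5.10]{Gyo91}, is valid for any regular prehomogeneous vector space, so $B_D\equiv b_D$ holds here as well. With the two polynomials identified, Kashiwara's theorem \cite{Kas76} applies to $B_D$ and shows that all of its roots are negative rational; combined with the symmetry about $-1$ from the previous step, this gives the asserted negative rational roots symmetric about $-1$.

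For the final clause I would argue formally. Evaluating the functional equation \eqref{8} at $s=-1$ gives $b_D(-1)\cdot f^{-1}=f^*(\p_x)(1)=0$, since $f^*(\p_x)$ is a homogeneous differential operator of order $n\ge1$; hence $-1$ is a root. Conversely, if $m$ is any integer root, then $m<0$ by Kashiwara, while the symmetry about $-1$ makes $-2-m$ a root as well, forcing $-2-m<0$; thus $-2<m<0$ and $m=-1$. Therefore $-1$ is the only integer root.

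The main obstacle is not computational but lies in verifying that regularity truly substitutes for reductivity at the two points indicated. One must confirm that regularity of $(G_D,\rho_D,V)$ genuinely yields $f^*\ne0$, so that Propositions~\ref{35} and \ref{69}, Lemma~\ref{24}, and hence Theorem~\ref{36} become applicable, and that the precise hypotheses of Gyoja's comparison \cite[Cor.~2.5.10]{Gyo91} are met for a regular---rather than merely reductive---prehomogeneous vector space. Once these two points are secured, the degree bound, the symmetry, the rationality and negativity of the roots, and the determination of the unique integer root all follow from results already in place.
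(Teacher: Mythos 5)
Your outline coincides with the paper's: regularity makes the hypothesis $D^*\ne V^*$ of Theorem~\ref{36} automatic (the paper cites \cite[Thm.~2.16]{Kim03} for this), speciality gives \eqref{27}, hence \eqref{17}, \eqref{8}, \eqref{9} via Propositions~\ref{35} and \ref{69}, and Lemma~\ref{24} gives the degree bound and the symmetry of the roots of $b_D$; then one identifies $B_D\equiv b_D$ and invokes Kashiwara's theorem \cite{Kas76}, and your closing argument (an integer root $m$ satisfies $m<0$ and, by symmetry, $-2-m<0$, so $m=-1$) is exactly right.

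The gap is the step you yourself flag as the ``main obstacle'' and then leave unresolved: the identification $B_D\equiv b_D$, i.e.\ \eqref{34}. Your justification is that \cite[Cor.~2.5.10]{Gyo91} ``is valid for any regular prehomogeneous vector space,'' but that is precisely the assertion that needs proof, and it is not how the paper proceeds. In Section~\ref{65} that corollary is invoked only in the \emph{reductive} setting, where regularity and nondegeneracy of $f$ come for free; a regular special linear free divisor need not be reductive, and Gyoja's statement does not cover this complementary situation off the shelf. The paper supplies the missing argument microlocally: by \cite[Prop.~4.6]{SKO80}, regularity of $D$ is equivalent to $V^*=T_0^*V$ being a good holonomic variety, and under \emph{this} hypothesis the arguments (not the statement) of \cite[Lem.~2.5.7]{Gyo91} still yield \eqref{34}. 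Without such a substitute, your argument proves only Theorem~\ref{36}: since \eqref{8} gives merely that $B_D$ divides $b_D$, Kashiwara's theorem controls the roots of $B_D$ while the symmetry controls the roots of $b_D$, and neither property transfers to the other polynomial; in particular the rationality/negativity claim and the ``only integer root'' conclusion both collapse until $b_D=B_D$ is actually established.
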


\begin{proof}
Note that the hypothesis $D^*\ne V^*$ in Theorem~\ref{36} becomes redundant for regular $D$ by \cite[Thm.~2.16]{Kim03}.

By \cite[Prop.~4.6]{SKO80}, regularity of $D$ is equivalent to $V^*=T_0^*V$ being a good holonomic variety.
Under this latter hypothesis the arguments in \cite[Lem.~2.5.7]{Gyo91} still yield \eqref{34}.
\end{proof}

Note that in Theorem~\ref{37} regularity of $f$ is equivalent to $\deg b_D(s)=n$.
Indeed, the degree $n$ coefficient of $b_f(s)$ is given by $f^*(\grad\log f)\cdot f$ by \cite[Prop.~2.13.(2)]{Kim03} and $\grad\log f\colon V\to V^*$ is a $G$-equivariant map by \cite[Prop.~2.13.(1)]{Kim03}.

\section{Euler homogeneity and normal representations}

The condition of (strong) Euler homogeneity arose in the context of the logarithmic comparison theorem in \cite[Conj.~1.4]{CMNC02} (see also \cite{CMN96} and \cite{GS06}) and has been studied in \cite[\S7]{GMNS06} in the case of linear free divisors.
It is conceivable that all linear free divisors are (strongly) Euler homogeneous, but there is neither a proof nor a counter-example.

\begin{dfn}
Let $D\subseteq V$ be a reduced hypersurface.
A vector field $\eps\in\Der(-\log D)$ is called an \emph{Euler vector field} for $D$ if $\eps(f)=f$ for some defining equation $f$ of $D$.
It is called an \emph{Euler vector field at $p\in D$} if in addition $\eps(p)=0$.
The hypersurface $D$ is called \emph{(strongly) Euler homogeneous at $p\in D$} if there is an Euler vector field at $p$ defined locally at $p$.
By (strong) Euler homogeneity of $D$ we mean that this property holds for all $p\in D$.
\end{dfn}

The following statement is implicit in the proof of \cite[Lem.~7.5]{GMNS06}.

\begin{lem}\label{41}
If $D$ is (locally) defined by $f$ and Euler homogeneous with Euler vector field $\eps$ then $D$ is strongly Euler homogeneous if $\eps(p)\in\Der(-\log f)(p)$ for all $p\in D$.
\end{lem}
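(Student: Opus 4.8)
The plan is to prove this by a pointwise correction of the given Euler vector field $\eps$. Fix $p\in D$; since $p\in D$ and $f$ is a local defining equation, $f(p)=0$. The goal is to produce a germ $\eps_p$ of a vector field at $p$ with $\eps_p(f)=f$ and $\eps_p(p)=0$, that is, an Euler vector field \emph{at} $p$ in the sense of the preceding definition, and to do this for every $p\in D$.

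First I would exploit the hypothesis $\eps(p)\in\Der(-\log f)(p)$ directly. By definition, the fiber $\Der(-\log f)(p)$ is the image of the evaluation map $\Der(-\log f)_p\to T_pV$, $\eta\mapsto\eta(p)$, on the germs at $p$ of vector fields tangent to the level sets of $f$. Hence the hypothesis furnishes a germ $\eta\in\Der(-\log f)_p$ with $\eta(f)=0$ and $\eta(p)=\eps(p)$. This $\eta$ is the correction term: it has the prescribed value at $p$ and annihilates $f$.

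Next I would set $\eps_p=\eps-\eta$ as a germ at $p$ and record two immediate verifications. Since $\eps(f)=f$ and $\eta(f)=0$, we get $\eps_p(f)=\eps(f)-\eta(f)=f$, so $\eps_p$ is still an Euler vector field for $D$ locally at $p$. Moreover $\eps_p(p)=\eps(p)-\eta(p)=0$ by the choice of $\eta$, so $\eps_p$ vanishes at $p$ and is therefore an Euler vector field at $p$. As $p\in D$ was arbitrary, this yields strong Euler homogeneity of $D$.

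There is essentially no serious obstacle here: the whole content is that the stated condition is precisely what guarantees the existence of the correcting field $\eta$ needed to kill the value $\eps(p)$ without disturbing the Euler equation $\eps(f)=f$. The only point requiring care is the reading of $\Der(-\log f)(p)$ as the evaluation of germs at $p$, rather than of global sections, so that $\eta$ is available locally near each point of $D$; this is the natural interpretation and is exactly what makes $\eps_p$ a genuine local Euler vector field vanishing at $p$.
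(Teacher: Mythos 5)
Your proof is correct and is exactly the argument the paper has in mind: the paper itself gives no proof, only noting the statement is implicit in \cite[Lem.~7.5]{GMNS06}, where the same correction trick is used --- pick $\eta\in\Der(-\log f)_p$ with $\eta(p)=\eps(p)$ and replace $\eps$ by $\eps-\eta$, which still satisfies $(\eps-\eta)(f)=f$ but now vanishes at $p$. Your remark about reading $\Der(-\log f)(p)$ as the evaluation of germs (or equivalently, by coherence on affine space, of global sections) is the right interpretation, so there is nothing to add.
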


Assume now that $D$ is a linear free divisor.
Then the preceding observation can be formulated as follows.

\begin{lem}\label{10}
Strong Euler homogeneity of of a linear free divisor $D$ means that the $G_D$-orbits and $A_D$-orbits in $D$ coincide.
\end{lem}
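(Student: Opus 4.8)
The plan is to translate both sides of the claimed equivalence into statements about the fibers of the modules $\Der(-\log D)$ and $\Der(-\log f)$ at a point $p\in D$, and then to compare orbit dimensions. Throughout I identify $A\in\gg_D$ with the matrix $d\rho_D(A)$, so that $\delta_A(p)=Ap$ and the tangent spaces to the orbits are $T_p(G_D\cdot p)=\gg_D\cdot p$ and $T_p(A_D\cdot p)=\aa_D\cdot p$. Recall that $\eps=A_1=E/n$ is a global Euler vector field with $\eps(f)=f$, so $D$ is (globally) Euler homogeneous; note also that $D$ is $G_D$-invariant, so for $p\in D$ both orbits lie in $D$.

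The key step is to compute the fiber $\Der(-\log f)(p):=\{\theta(p):\theta\in\Der(-\log f)_{V,p}\}$. By Saito's criterion $\Der(-\log D)$ is free with basis $\delta_1,\dots,\delta_n$, and by \eqref{20}, \eqref{55}, and \eqref{53} one has $\delta_k(f)=d\chi_D(A_k)\cdot f$, which equals $f$ for $k=1$ and $0$ for $k\ge 2$. Hence for any local section $\theta=\sum_k g_k\delta_k$ one finds $\theta(f)=g_1\cdot f$, so that $\theta\in\Der(-\log f)$ if and only if $g_1=0$; thus $\Der(-\log f)$ is free with basis $\delta_2,\dots,\delta_n$. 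Evaluating at $p$ gives
\[
\Der(-\log f)(p)=\CC\delta_2(p)+\dots+\CC\delta_n(p)=\aa_D\cdot p=T_p(A_D\cdot p),
\]
and likewise $\Der(-\log D)(p)=\gg_D\cdot p=T_p(G_D\cdot p)$.

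Next I would characterize strong Euler homogeneity at $p$ pointwise. Any local Euler vector field $\eta$ at $p$ satisfies $(\eps-\eta)(f)=0$, so $\eps-\eta$ is a local section of $\Der(-\log f)$; if moreover $\eta(p)=0$, then $(\eps-\eta)(p)=\eps(p)$. Conversely, if $\eps(p)=\theta(p)$ for some local section $\theta$ of $\Der(-\log f)$, then $\eps-\theta$ is a local Euler vector field vanishing at $p$. Hence $D$ is strongly Euler homogeneous at $p$ if and only if $\eps(p)\in\Der(-\log f)(p)$, the sufficiency being precisely Lemma~\ref{41}. Combining with the previous step, strong Euler homogeneity at $p$ is equivalent to $\eps(p)\in T_p(A_D\cdot p)$; since $T_p(G_D\cdot p)=\CC\eps(p)+T_p(A_D\cdot p)$ by \eqref{53}, this says exactly that $\dim(G_D\cdot p)=\dim(A_D\cdot p)$.

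Finally I would pass from equal dimensions to equal orbits: as $A_D\subseteq G_D$ we have $A_D\cdot p\subseteq G_D\cdot p$ with both orbits irreducible, so equality of dimensions forces $G_D\cdot p=A_D\cdot p$, and conversely equal orbits trivially have equal dimension. Quantifying over all $p\in D$ then yields the asserted equivalence. The bulk of the work is concentrated in the computation of $\Der(-\log f)(p)$; once the freeness of $\Der(-\log f)$ on $\delta_2,\dots,\delta_n$ is in hand, the remainder is a dimension count on orbits.
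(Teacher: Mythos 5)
Your proof is correct in substance and follows exactly the route the paper intends: the paper states Lemma~\ref{10} without proof, as a direct reformulation of Lemma~\ref{41}, and your computation $\Der(-\log f)(p)=\aa_D\cdot p=T_p(A_D\cdot p)$, $\Der(-\log D)(p)=\gg_D\cdot p=T_p(G_D\cdot p)$, combined with the pointwise criterion ``strongly Euler homogeneous at $p$ iff $\eps(p)\in\Der(-\log f)(p)$,'' is precisely the content being invoked there. Two of your steps, however, are stated more loosely than they should be and need standard (one-line) repairs.

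First, in the direction ``strong Euler homogeneity at $p$ implies $\eps(p)\in\Der(-\log f)(p)$'' you assert that any local Euler vector field $\eta$ at $p$ satisfies $(\eps-\eta)(f)=0$. By the paper's definition, $\eta$ only satisfies $\eta(g)=g$ for \emph{some} local defining equation $g=uf$ with $u\in\calO_{V,p}^*$, so a priori $\eta(f)=vf$ where $v=1-\eta(u)/u$; since $\eta(p)=0$ forces $\eta(u)(p)=0$, the function $v$ is a unit at $p$, and replacing $\eta$ by $v^{-1}\eta$ produces a local Euler field for $f$ itself, after which your argument goes through. Second, ``both orbits irreducible, so equality of dimensions forces equality'' is not a valid principle for locally closed sets (consider $\CC^*\subset\CC$). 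What saves it here: equal dimension plus irreducibility gives equality of \emph{closures}, so $A_D\cdot p$ is open in $G_D\cdot p$; moreover $A_D=(\ker\chi_D)^\circ$ is normal in $G_D$, so every $A_D$-orbit inside $G_D\cdot p$ is a translate $g(A_D\cdot p)=A_D\cdot(gp)$, hence also open of the same dimension; since $G_D\cdot p$ is connected, this partition into disjoint open orbits has a single member, i.e. $A_D\cdot p=G_D\cdot p$. (Alternatively, quantify your pointwise criterion over all points of $G_D\cdot p\subseteq D$ before concluding.) With these two patches the proof is complete and coincides with the argument the paper leaves implicit.
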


We shall now give a sufficient condition for Euler homogeneity of $D$ at $x_0\in D$.
Let $G_{D,x_0}$ be the stabilizer of $x_0$ in $G_D$ with Lie algebra $\gg_{D,x_0}$ and consider the representation $\rho_{D,x_0}\colon G_{D,x_0}\to V_{x_0}$ induced by $\rho_D$ on the normal space $V_{x_0}=V/d\rho_D(\gg_D)x_0$ to the tangent space of the $G_D$-orbit of $x_0$ at $x_0$.

The localization $f_{x_0}\in\Sym(V^*)$ of $f$ at $x_0$ \cite[Def.~6.7]{SKO80} is the homogeneous nonzero polynomial defined by 
\[
f(x_0+\veps x')\equiv\veps^k\cdot f_{x_0}(x')\mod\ideal{\veps^{k+1}}.
\]
By \cite[Def.~6.8]{SKO80}, $f_{x_0}(x')$ induces a relative invariant $f_{x_0}(y)$ of the representation $(G_{D,x_0},\rho_{D,x_0},V_{x_0})$ with character $\chi_{f_{x_0}}=\chi_D$ where $y=y_1,\dots,y_k$ are coordinates on $V_{x_0}$.
Note however that the latter might not be a prehomogeneous vector space.

In the context of linear free divisors, Definition~\ref{58} applied to $(G,\rho,V)=(G_{D,x_0},\rho_{D,x_0},V_{x_0})$, as in Definition~\ref{60}, gives another natural candidate $D_{x_0}=D$ for such an invariant.

\begin{dfn}
Assume that $A_1,\dots,A_k$ form a basis of $\gg_{D,x_0}$ and define $D_{x_0}\subseteq V_{x_0}$ by 
\[
f'_{x_0}(y)=\det(d\rho_{D,x_0}(A_1)y,\dots,d\rho_{D,x_0}(A_k)y)\in\Sym(V_{x_0}^*).
\]
\end{dfn}

Then $f'_{x_0}\ne0$ or $D_{x_0}\ne V_{x_0}$ is equivalent to $(G_{D,x_0},\rho_{D,x_0},V_{x_0})$ being a prehomogeneous vector space.

\begin{lem}\label{46}
Let $D$ be a linear free divisor and assume that $x_0\in D$ such that $D_{x_0}\ne V_{x_0}$. 
Then $f'_{x_0}(y)=f_{x_0}(y)$.
\end{lem}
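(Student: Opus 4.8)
The plan is to show that the two homogeneous polynomials $f_{x_0}(y)$ and $f'_{x_0}(y)$ on $V_{x_0}$ agree, by exploiting that both are relative invariants of the (possibly non-prehomogeneous) action $(G_{D,x_0},\rho_{D,x_0},V_{x_0})$ sharing the same character, and that under the hypothesis $D_{x_0}\ne V_{x_0}$ the action is prehomogeneous, so relative invariants with a fixed character are unique up to scalar. First I would invoke the two facts already recorded in the excerpt: by \cite[Def.~6.8]{SKO80} the localization $f_{x_0}$ is a relative invariant of $\rho_{D,x_0}$ with character $\chi_{f_{x_0}}=\chi_D$; and the determinantal construction shows, via \eqref{20} applied to the group $G_{D,x_0}$, that $f'_{x_0}$ is a relative invariant with character $\det\circ\rho_{D,x_0}$ times the correction coming from $d\chi$. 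The key point is that the hypothesis $D_{x_0}\ne V_{x_0}$ forces $f'_{x_0}\ne 0$, which by the remark preceding the lemma makes $(G_{D,x_0},\rho_{D,x_0},V_{x_0})$ prehomogeneous; hence there is a Zariski-dense orbit and any two nonvanishing relative invariants with the same character are proportional.

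The main work is then to pin down the two characters and the scalar. For the character of $f'_{x_0}$, I would differentiate the determinant as in the derivation of \eqref{79} and \eqref{29}: computing $\delta_A(f'_{x_0})$ for $A\in\gg_{D,x_0}$ gives $d\chi_{f'_{x_0}}(A)=\tr\circ d\rho_{D,x_0}(A)+\sum_j[A,A_j]/A_j$, exactly the normal-space analog of the formula used in the proof of Theorem~\ref{56}. For $f_{x_0}$ the character is $\chi_D$ by construction. So I would need to check that these two characters coincide on $\gg_{D,x_0}$; this is where I expect the genuine content to lie, and I would match degrees using that both $f_{x_0}$ and $f'_{x_0}$ have degree equal to $\dim V_{x_0}=k$ (the determinant is $k\times k$, and the localization degree equals the codimension data forced by the free-divisor structure). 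Equality of characters plus equality of degrees then yields proportionality with a constant.

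The final step fixes the proportionality constant to be $1$. I would do this by a normalization at a convenient generic point of the open orbit, comparing leading coefficients; the natural choice is to use the compatibility of the determinantal formula with Saito's criterion for the ambient $D$, so that the coefficient inherited from $f$ matches the coefficient produced by the localization. The hard part will be verifying that the two characters agree on all of $\gg_{D,x_0}$ rather than merely on a generating set, and ensuring the degree bookkeeping survives passage to the normal space $V_{x_0}$; once these are in hand, uniqueness of relative invariants on the prehomogeneous space $(G_{D,x_0},\rho_{D,x_0},V_{x_0})$ closes the argument and gives $f'_{x_0}(y)=f_{x_0}(y)$.
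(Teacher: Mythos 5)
Your route---show both polynomials are relative invariants of $(G_{D,x_0},\rho_{D,x_0},V_{x_0})$ with the same character, invoke uniqueness of relative invariants on a prehomogeneous space, then fix the scalar---is genuinely different from the paper's proof, which is a bare-hands determinant expansion in adapted coordinates. But as written it has two real gaps, and the second cannot be closed by character theory at all. First, the character identity that you yourself flag as "where the genuine content lies" is left unproven. It is not a formality: arguing as for \eqref{29}, the character of the determinantal invariant $f'_{x_0}$ is $\det\rho_{D,x_0}(g)$ corrected by the determinant of the adjoint action of $g$ on $\gg_{D,x_0}$, while $\chi_{f_{x_0}}=\chi_D|_{G_{D,x_0}}$ carries the analogous correction on all of $\gg_D$. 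Matching the two requires an actual argument---for instance, that the map $A\mapsto d\rho_D(A)x_0$ gives a $G_{D,x_0}$-equivariant isomorphism $\gg_D/\gg_{D,x_0}\cong d\rho_D(\gg_D)x_0$, so that the tangential determinant factors cancel. Your proposal only gestures at this; without it the uniqueness argument has nothing to bite on. (A small point in your favor: once the characters do agree, degree matching is automatic---two relative invariants with the same character on a prehomogeneous space are proportional regardless---so that part of your plan is redundant rather than wrong.)

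Second, and decisively, the normalization step is not merely "hard"; the on-the-nose equality $f'_{x_0}=f_{x_0}$ is \emph{false} for arbitrary choices of the data, so no abstract argument can yield the constant $1$. Indeed, replacing a basis element $A_n\in\gg_D\setminus\gg_{D,x_0}$ by $\lambda A_n$ rescales $f$ in \eqref{3}, hence rescales its localization $f_{x_0}$ by $\lambda$, while $f'_{x_0}$, which involves only $A_1,\dots,A_k\in\gg_{D,x_0}$, is unchanged. The lemma therefore holds only under the adapted choices made in the paper's proof: coordinates with $d\rho_D(\gg_D)x_0=0\times\CC^{n-k}$, a basis extension normalized as in \eqref{43}, and $y=x'_1,\dots,x'_k$. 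Verifying $c=1$ under those choices means expanding $f(x_0+\veps x')=\det(A_1(x_0+\veps x'),\dots,A_n(x_0+\veps x'))$ and extracting the $\veps^k$-coefficient, which is exactly the computation \eqref{44}--\eqref{45} constituting the paper's entire proof; your appeal to "compatibility with Saito's criterion" is not a substitute. The same expansion is also what justifies your unproven assertion that $\deg f_{x_0}=k$: the multiplicity of $f$ at $x_0$ is $\geq k$ always, with equality precisely because $f'_{x_0}\neq0$, not by any general "free-divisor structure" principle. So even if you filled the character gap, your argument would still have to reproduce the paper's computation as its final step, making the relative-invariant machinery a detour rather than an alternative.
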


\begin{proof}
Note that $A_{k+1}x_0,\dots,A_nx_0$ form a basis of the tangent space $d\rho_D(\gg_D)x_0$ to the $G_D$-orbit of $x_0$ at $x_0$.
We may assume that $d\rho_D(\gg_D)x_0=0\times\CC^{n-k}$ and then that
\begin{equation}\label{43}
(A_{k+1}x_0,\dots,A_nx_0)=
\begin{pmatrix}
0&\dots&0\\
\vdots&&\vdots\\
0&\dots&0\\
1&\dots&0\\
\vdots&\ddots&\vdots\\
0&\dots&1\\
\end{pmatrix}
\end{equation}
We may then choose $y=x'_1,\dots,x'_k$.
Using \eqref{3} and \eqref{43}, we compute
\begin{align}\label{44}
f(x_0+\veps x')&=\det(A_1(x_0+\veps x'),\dots,A_n(x_0+\veps x'))\\
\nonumber&\equiv\veps^k\cdot\det(A_1x',\dots,A_kx',A_{k+1}x_0,\dots,A_nx_0)\mod\ideal{\veps^{k+1}}.
\end{align}
Denoting by $A'_i$ the matrix consisting of the upper $k$ rows of $A_i$, \eqref{43} implies that
\begin{align}\label{45}
\det(A_1x',\dots,A_kx',A_{k+1}x_0,\dots,A_nx_0)&=\det(A'_1x',\dots,A'_kx')\\
\nonumber&\equiv\det(d\rho_{D,x_0}(A_1)y,\dots,d\rho_{D,x_0}(A_k)y)\\
\nonumber&=f'_{x_0}(y)\mod\ideal{x_{k+1},\dots,x_n}
\end{align}
By hypothesis, $f'_{x_0}\ne0$ and the claim follows from \eqref{44} and \eqref{45}.
\end{proof}

\begin{prp}
Let $D$ be a linear free divisor and assume that $x_0\in D$ such that $D_{x_0}\ne V_{x_0}$. 
Then $D$ is Euler homogeneous at $x_0$.
\end{prp}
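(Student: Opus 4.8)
The plan is to produce an explicit Euler vector field for $D$ at $x_0$, i.e.\ a vector field $\delta \in \Der(-\log D)$ with $\delta(f) = f$ and $\delta(x_0) = 0$. Since $D$ is a linear free divisor, $\Der(-\log D) = \Gamma(V,\Der(-\log D))_0$ is spanned by the degree-$0$ fields $\delta_1,\dots,\delta_n$ corresponding to the basis $A_1,\dots,A_n$ of $\gg_D$, and any linear combination $\delta_A = \delta_{\sum c_k A_k}$ lies in $\Der(-\log D)$. By \eqref{20} we have $\delta_A(f) = d\chi_D(A)\cdot f$, so the condition $\delta(f) = f$ amounts to choosing $A \in \gg_D$ with $d\chi_D(A) = 1$, i.e.\ \eqref{55}. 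The remaining requirement $\delta(x_0) = 0$ translates to $d\rho_D(A)x_0 = 0$, that is, $A \in \gg_{D,x_0}$. So what I must produce is an element $A \in \gg_{D,x_0}$ satisfying $d\chi_D(A) = 1$.

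First I would observe that the restriction of $d\chi_D$ to the stabilizer subalgebra $\gg_{D,x_0}$ is exactly the character $d\chi_{f_{x_0}}$ attached to the localization: by Lemma~\ref{46}, the hypothesis $D_{x_0} \ne V_{x_0}$ gives $f'_{x_0} = f_{x_0}$, and the localized invariant $f_{x_0}$ carries character $\chi_{f_{x_0}} = \chi_D$ on $G_{D,x_0}$ (as recorded in the discussion preceding the definition of $D_{x_0}$). Hence $d\chi_D|_{\gg_{D,x_0}} = d\chi_{f_{x_0}} = d\chi_{f'_{x_0}}$, and the latter is the character of the prehomogeneous determinant $(G_{D,x_0},\rho_{D,x_0},V_{x_0})$ defined by $f'_{x_0}$. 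The key input is then that this character is \emph{nontrivial}: applying the machinery of Definition~\ref{58} and the paragraph after it to $(G_{D,x_0},\rho_{D,x_0},V_{x_0})$, the condition $D_{x_0} \ne V_{x_0}$ (equivalently $f'_{x_0} \ne 0$) makes this a genuine prehomogeneous determinant, so by \cite[Prop.~2.4.(2)]{Kim03} its character is not trivial. Therefore $d\chi_D$ does not vanish identically on $\gg_{D,x_0}$.

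Given nontriviality, I can pick some $B \in \gg_{D,x_0}$ with $d\chi_D(B) = c \ne 0$ and set $A = B/c$, so that $d\chi_D(A) = 1$ and $A \in \gg_{D,x_0}$. Then $\delta_A$ is the desired Euler vector field: it lies in $\Der(-\log D)$, satisfies $\delta_A(f) = f$ by \eqref{20} and \eqref{55}, and vanishes at $x_0$ since $A \in \gg_{D,x_0}$ means $d\rho_D(A)x_0 = 0$ and hence $\delta_A(x_0) = x_0^t A^t \cdot 0 = 0$ by \eqref{21}. As $\gg_{D,x_0}$ consists of global degree-$0$ fields, $\delta_A$ is even globally defined, so in particular $D$ is Euler homogeneous at $x_0$.

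I expect the only genuine subtlety to be the bookkeeping identifying $d\chi_D|_{\gg_{D,x_0}}$ with the character of the localized invariant, so that nontriviality of the latter can be invoked; once Lemma~\ref{46} is in hand this is essentially formal. The geometric translation of the Euler conditions into the two linear-algebraic constraints $d\chi_D(A) = 1$ and $A \in \gg_{D,x_0}$ is routine, so the heart of the argument is simply that $D_{x_0} \ne V_{x_0}$ forces $d\chi_D$ to be nonzero on the stabilizer subalgebra.
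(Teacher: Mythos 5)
Your proof is correct and follows essentially the same route as the paper's: both arguments reduce Euler homogeneity at $x_0$ to finding $B\in\gg_{D,x_0}$ with $d\chi_D(B)=1$, and both obtain such a $B$ from the nontriviality of the character of the localized relative invariant $f_{x_0}=f'_{x_0}$ on the prehomogeneous normal representation $(G_{D,x_0},\rho_{D,x_0},V_{x_0})$. The only cosmetic difference is that the paper writes $B=A_1-A$ with $A\in\aa_D$ and concludes via Lemma~\ref{41}, whereas you exhibit $\delta_B$ directly as the Euler vector field vanishing at $x_0$, which amounts to the same computation.
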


\begin{proof}
By assumption $(G_{D,x_0},\rho_{D,x_0},V_{x_0})$ is a prehomogeneous vector space and $f_{x_0}(y)$ is a nonzero relative invariant with character $\chi_D$.
Thus, there must be an infinitesimal vector field as in \eqref{21} that is not tangent to the level sets of $f_{x_0}(y)$.
By \eqref{20}, this means that $\chi_D(B)=1$ for some $B\in\gg_{x_0}$.
Using \eqref{2} and \eqref{55} we conclude that $B=A_1-A$ with $A\in\aa_D$ and hence $\rho_D(A_1)x_0=\rho_D(A)x_0$ by definition of $\gg_{x_0}$.
Then the claim follows from Lemma~\ref{41}.
\end{proof}

\section{Koszul freeness and Euler homogeneity}

Koszul-freeness is a natural finiteness condition on logarithmic vector fields and is also closely related to the logarithmic comparison theorem (see \cite{Tor04} and \cite{CN05}).
In geometric terms, \emph{Koszul freeness} of a free divisor $D\subseteq V$ can be defined as the local finiteness of the \emph{logarithmic stratification} by maximal integral submanifolds along logarithmic vector fields as defined in \cite[\S3]{Sai80}.
As shown in \cite[Thm.~7.4]{GMNS06}, this condition can be formulated algebraically as the symbols of a basis of $\Der(-\log D)\subseteq\calD_V$ forming a regular sequence in $\gr_F\calD_V=\calO_V\otimes\Sym(V)$ where $F$ denotes the order filtration on $\calD_V$.
Avoiding the technical details, this equivalence can be understood as follows:
At each point $p\in V$, $\Der(-\log D)(p)$ spans the tangent space of the logarithmic stratum through $p$.
Thus, the \emph{logarithmic variety} in $T^*V$ defined by the symbols of local bases of $\Der(-\log D)$ is the union of conormals to the logarithmic strata (see \cite[(3.16)]{Sai80}).
It is of (co-)dimension $n$ exactly where the symbols of $n$ local basis elements of $\Der(-\log D)$ form a regular sequence in the Cohen--Macaulay coordinate ring $\calO_V\otimes\Sym(V)$ of $T^*V$ (see \cite[1.8]{CN02}).

It is tempting to introduce a stronger version of Koszul freeness by imposing local finiteness of the logarithmic stratification of $\Der(-\log f)$ on $D$. 
But unless $D$ is strongly Euler homogeneous, which reduces this condition to ordinary Koszul freeness by Lemma~\ref{41}, it is not clear that the condition in question depends only on $D$. 

For a linear free divisor $D$, the logarithmic stratification is the stratification by orbits of $G_D$ consisting of smooth locally closed algebraic varieties (see \cite[Prop.~8.3]{Hum75}). 
Using the action of $\CC^*\subseteq G_D$, one can see that local finiteness of this stratification is equivalent to finiteness.
Moreover, there is a natural choice of defining equation which we use to introduce the desired strong version of Koszul freeness for this class of divisors.

\begin{dfn}\label{12}
We call a linear free divisor $D$ \emph{Koszul free} if the stratification of $D$ by orbits of $G_D$ is finite, and \emph{strongly Koszul free}, if the same holds for the stratification by orbits of $A_D$.
\end{dfn}

Note that the $G_D$-orbit $V\backslash D$ is a union of infinitely many $A_D$-orbits by reasons of dimension.
The algebraic definition of Koszul freeness above translates to strong Koszul freeness as follows.

\begin{prp}\label{71}
A linear free divisor $D$ is Koszul free if and only if the symbols of a basis of $\gg_D$ form a regular sequence in $\gr_FD_V$.
It is strongly Koszul free if and only if the symbols of a basis of $\aa_D$ and $f$ form a regular sequence in $\gr_FD_V$.
\end{prp}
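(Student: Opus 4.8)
The plan is to translate the geometric characterization of Koszul freeness recalled in the text into the algebraic regular-sequence condition, doing this separately for the $G_D$-action (ordinary Koszul freeness) and the $A_D$-action (strong Koszul freeness). First I would invoke the equivalence from \cite[Thm.~7.4]{GMNS06}: since for a linear free divisor the logarithmic stratification coincides with the stratification by $G_D$-orbits, finiteness of that stratification is exactly the condition that the symbols $\sigma(\delta_1),\dots,\sigma(\delta_n)$ of a basis $A_1,\dots,A_n$ of $\gg_D$ form a regular sequence in $\gr_F\calD_V=\calO_V\otimes\Sym(V)$. Concretely, the logarithmic variety cut out by these symbols in $T^*V$ is the union of conormals to the $G_D$-orbits, and its having codimension $n$ at every point is equivalent to regularity of the sequence in the Cohen--Macaulay ring $\calO_V\otimes\Sym(V)$, as in \cite[1.8]{CN02}. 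This gives the first assertion of the proposition directly.

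For the strong version I would argue analogously, replacing $\gg_D$ by $\aa_D$ and the group $G_D$ by $A_D$. By Lemma~\ref{10}, strong Euler homogeneity is the statement that $G_D$- and $A_D$-orbits in $D$ coincide, but here I should not assume strong Euler homogeneity; rather, the point is that inside $D$ the $A_D$-orbits are the relevant logarithmic strata for $\Der(-\log f)$, while the open complement $V\backslash D$ splits into infinitely many $A_D$-orbits (as already observed in the text by a dimension count, since $\dim A_D=n-1<n=\dim(V\backslash D)$). The defining equation $f$ enters precisely to carve out $D$: the symbol $\sigma(f)$, together with the symbols of a basis $A_2,\dots,A_n$ of $\aa_D$, should cut out in $T^*V$ the union of the conormals to the $A_D$-orbits contained in $D$, and finiteness of that orbit stratification of $D$ is then equivalent to these $n$ symbols forming a regular sequence in $\gr_F\calD_V$.

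The main obstacle I anticipate is justifying that the correct generating set for the logarithmic variety on $D$ is $\{f,\delta_{A_2},\dots,\delta_{A_n}\}$ rather than merely the $n-1$ vector fields spanning $\aa_D$. Geometrically this is natural: at a point $p\in D$ the tangent space to the $A_D$-orbit through $p$ is spanned by the $d\rho_D(A_i)p$, $i=2,\dots,n$, whose conormal together with the hypersurface condition from $f$ produces the right codimension-$n$ locus; but off $D$ the function $f$ is a unit and contributes nothing, so only the complement needs separate handling. I would make this precise by the same symbol/conormal dictionary used for the $G_D$-case, checking that the symbols of $A_2,\dots,A_n$ span the conormal directions to the orbits within each stratum and that $\sigma(f)$ accounts for the transverse direction to $D$. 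Once the logarithmic variety on $D$ is identified with $V(\sigma(f),\sigma(\delta_{A_2}),\dots,\sigma(\delta_{A_n}))$, the Cohen--Macaulay criterion of \cite[1.8]{CN02} yields the regular-sequence characterization exactly as before, completing the proof.
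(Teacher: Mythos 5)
Your proposal is correct and takes essentially the same approach as the paper: identify the variety cut out by $f$ and the symbols of a basis of $\aa_D$ (resp.\ by the symbols of a basis of $\gg_D$) with the union of conormals to the $A_D$-orbits in $D$ (resp.\ the $G_D$-orbits), and then use Cohen--Macaulayness of $\gr_FD_V=\Sym(V^*\oplus V)$ to translate the codimension-$n$ condition, i.e.\ finiteness of the orbit stratification, into the regular-sequence condition. The only cosmetic difference is that you handle the first assertion by citing \cite[Thm.~7.4]{GMNS06} together with the identification of the logarithmic stratification with the $G_D$-orbit stratification, whereas the paper runs the conormal argument directly for the strong case and notes the ordinary case is similar.
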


\begin{proof}
Let $\delta_2,\dots,\delta_n$ be a basis of $\aa_D$ and denote by $\sigma_F\colon D_V\to\gr_FD_V$ the symbol map for the order filtration $F$ on $D_V$.
As $\gr_FD_V=\Sym(V^*\oplus V)$ is a polynomial ring hence Cohen--Macaulay, we have to show that the variety $\Var(f,\sigma_F(\delta_2),\dots,\sigma_F(\delta_n))$ has (co-)dimension $n$ exactly if the stratification of $D$ by orbits of $A_D$ is finite.
As $\aa_D$ spans the tangent spaces of the orbits, $\Var(\sigma_F(\delta_2),\dots,\sigma_F(\delta_n))$ is the union of conormals to the orbits and $\Var(f)=T^*V|_D$.
The second claim follows and the proof of the first one is similar.
\end{proof}

Let us now relate the properties of strong Euler homogeneity, Koszul, and strong Koszul freeness for linear free divisors.

\begin{prp}\label{11}
The following implications hold for any linear free divisor $D$.
\begin{enumerate}[(a)]
\item\label{11a} Strong Koszul freeness implies Koszul freeness.
\item\label{11b} Strong Koszul freeness implies strong Euler homogeneity.
\item\label{11c} For Koszul free $D$, strong Euler homogeneity and strong Koszul freeness are equivalent.
\end{enumerate}
\end{prp}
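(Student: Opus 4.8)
The plan is to establish the three implications by unwinding the characterizations in Proposition~\ref{71} together with the orbit description in Lemma~\ref{10}. Throughout I will use the fact that, by Proposition~\ref{71}, Koszul freeness is equivalent to the symbols of a basis of $\gg_D$ forming a regular sequence in the Cohen--Macaulay ring $\gr_FD_V$, strong Koszul freeness to the symbols of a basis of $\aa_D$ together with $f$ forming a regular sequence, and that by Lemma~\ref{10} strong Euler homogeneity amounts to the equality of $G_D$-orbits and $A_D$-orbits inside $D$.

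For part~\eqref{11a}, I would argue on the level of the associated varieties. If the symbols of a basis $\delta_2,\dots,\delta_n$ of $\aa_D$ together with $f$ form a regular sequence, then $\Var(f,\sigma_F(\delta_2),\dots,\sigma_F(\delta_n))$ has codimension $n$. Choosing $A_1$ with \eqref{55} so that $\delta_1,\dots,\delta_n$ is a basis of $\gg_D$ by \eqref{53}, I want to deduce that $\sigma_F(\delta_1),\dots,\sigma_F(\delta_n)$ is again a regular sequence. The point is that over the open orbit $V\backslash D$ the full system $\delta_1,\dots,\delta_n$ already has the correct rank, while along $D$ one can compare the logarithmic variety for $\gg_D$ with that for $\aa_D$: the extra generator $\sigma_F(\delta_1)=\sigma_F(\eps)$ cuts down the locus $\Var(f)=T^*V|_D$ in the same way that $f$ does, so the two regular-sequence conditions have the same codimension count. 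Concretely, I expect to show that the union of $A_D$-conormals over $D$ and the union of $G_D$-conormals differ only by the open-orbit contribution, which is controlled by the single relative invariant $f$.

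For part~\eqref{11b}, the cleanest route is via the orbit interpretation. Strong Koszul freeness says the stratification of $D$ by $A_D$-orbits is finite. Since the $G_D$-orbits in $D$ are finite in number whenever the $A_D$-orbits are (each $G_D$-orbit being a union of $A_D$-orbits), and since $\dim A_D=n-1=\dim G_D-1$, a dimension count forces each $G_D$-orbit in $D$ to consist of a single $A_D$-orbit: a finite collection of $A_D$-orbits cannot sweep out a positive-dimensional family inside a fixed $G_D$-orbit. Hence $G_D$-orbits and $A_D$-orbits coincide on $D$, which is exactly strong Euler homogeneity by Lemma~\ref{10}. This is where the hypothesis $\dim G_D/A_D=1$ from \eqref{83} does the essential work.

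For part~\eqref{11c}, one implication is \eqref{11b}, so I need only the converse: assuming $D$ Koszul free and strongly Euler homogeneous, derive strong Koszul freeness. Koszul freeness gives that the $G_D$-orbit stratification of $D$ is finite, and strong Euler homogeneity (Lemma~\ref{10}) gives that $A_D$-orbits and $G_D$-orbits in $D$ coincide; together these force the $A_D$-orbit stratification of $D$ to be finite as well, which is strong Koszul freeness by Definition~\ref{12}. I expect the main obstacle to lie in part~\eqref{11a}, where the comparison of the two regular sequences must be made precise at the level of the symbol algebra rather than just the orbit stratification, since the passage from $\aa_D$-regularity-with-$f$ to $\gg_D$-regularity requires carefully locating where the generator $\sigma_F(\eps)$ and the equation $f$ interact along the conormal to $D$; the remaining two parts are then essentially a finiteness-and-dimension bookkeeping via Lemma~\ref{10}.
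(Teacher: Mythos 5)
Your parts (a) and (c) are fine: (c) coincides with the paper's argument (Lemma~\ref{10} plus part (b)), and for (a) the parenthetical remark you make \emph{inside} (b) --- each $G_D$-orbit is a union of $A_D$-orbits since $A_D\subseteq G_D$, so finiteness of the $A_D$-stratification of $D$ implies finiteness of the $G_D$-stratification --- is already a complete proof from Definition~\ref{12}, and it is the paper's. Your conormal-variety route for (a) via Proposition~\ref{71} is therefore superfluous, and as stated it is imprecise: the two logarithmic varieties are related over $D$ by an \emph{inclusion} (the $\gg_D$-variety restricted over $D$ sits inside the $\aa_D$-variety), not by ``differing only in the open-orbit contribution''; the inclusion plus the zero section over $V\setminus D$ would make that argument work, but it is not needed.

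The genuine gap is in (b), which is the only substantive implication. Your crucial step is: ``a dimension count forces each $G_D$-orbit in $D$ to consist of a single $A_D$-orbit: a finite collection of $A_D$-orbits cannot sweep out a positive-dimensional family inside a fixed $G_D$-orbit.'' But nothing in your argument shows that a $G_D$-orbit containing \emph{more than one} $A_D$-orbit must contain a positive-dimensional family of them --- and that is exactly the point to be proved; codimension one plus a dimension count cannot deliver it. For a general codimension-one subgroup the claim is false: let $G$ be the affine group of the line acting on $\CC$ and $A=\{x\mapsto ax\}\subseteq G$; the single $G$-orbit $\CC$ is the disjoint union of exactly two $A$-orbits, $\{0\}$ and $\CC\setminus\{0\}$, with no positive-dimensional family in sight. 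Likewise your count does not exclude a $G_D$-orbit made of one open dense $A_D$-orbit together with finitely many lower-dimensional ones in its boundary. What rules out such configurations --- and what your proof never invokes --- is the structure behind \eqref{83} and \eqref{53}: $A_D=(\ker\chi_D)^\circ$ is \emph{normal} in $G_D$, and $\gg_D=\CC\cdot E/n\oplus\aa_D$ with $E/n$ generating the central scalar subgroup $\CC^*\subseteq G_D$, so that $G_D\cdot x_0=\CC^*\cdot(A_D\cdot x_0)$. Hence the $A_D$-orbits inside a fixed $G_D$-orbit are the scalar translates $A_D\cdot(tx_0)$, in particular all of the same dimension, and they are parametrized by the quotient of $\CC^*$ by the closed subgroup $\{t\in\CC^*\mid tx_0\in A_D\cdot x_0\}$, which is either all of $\CC^*$ (a single $A_D$-orbit) or finite (infinitely many $A_D$-orbits). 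Only with this dichotomy does finiteness of the $A_D$-stratification force $G_D$-orbits and $A_D$-orbits in $D$ to coincide, which is strong Euler homogeneity by Lemma~\ref{10}. This is precisely the role the Euler vector field plays in the paper's proof: there $D$ is stratified by the rank loci $T_k$ of $\aa_D$, which are homogeneous and hence tangent to $\eps$, so each $T_k$ is a union of $G_D$-orbits; then each $G_D$-orbit in $T_k\setminus T_{k-1}$ is a finite union of equidimensional, hence open, $A_D$-orbits and, being connected, must be a single one.
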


\begin{proof}
\eqref{11a} is clear by Definition~\ref{12} and \eqref{11c} follows from \eqref{11b} and Lemma~\ref{10}.

For $0\le k\le n-1$, let $T_k=\{p\in D\mid\rk\aa_D(p)\le k\}$ (see \cite[\S7]{GMNS06} and \cite[Def.~3.12]{Sai80}).
Then $T_k$ is a homogeneous algebraic variety defined by $f$ and the $(k+1)\times(k+1)$-minors of the matrix $(\delta_i)_{i=2,\dots,n}$ and equals the union of $A_D$-orbits of dimension at most $k$.
As the Euler vector field $\eps$ is the infinitesimal generator of the $\CC^*$-action that defines homogeneity, $\eps$ is tangent to $T_k$ and to all $G_D$-orbits.
Therefore, $T_k$ is also a union of $G_D$-orbits.
So if $D$ is strongly Koszul free, $T_k\backslash T_{k-1}$ is the finite union of all $k$-dimensional $A_D$-orbits each of which is also a $G_D$-orbit.
This proves \eqref{11b}.
\end{proof}

\section{Logarithmic comparison theorem}

Let $D\subseteq V$ be a reduced hypersurface, let $U=V\smallsetminus D$, and let $j:U\to V$ be inclusion.
Then the de~Rham morphism
\[
\Omega_V^\bullet(*D)\to\R j_*\CC_U
\]
is a quasi-isomorphism.
This so-called \emph{Grothendieck's comparison theorem} is the core of the proof of Grothendieck's algebraic de~Rham theorem \cite{Gro66} in which $D$ plays the role of a divisor at infinity for a compactification.

By analogy with this comparison theorem, one says that the \emph{logarithmic comparison theorem (LCT)} holds for $D$ if the inclusion
\[
\Omega_V^\bullet(\log D)\hookrightarrow\Omega_V^\bullet(*D)
\]
is a quasi-isomorphism.
For isolated singularities the LCT can be explicitly characterized in the quasihomogeneous case (see \cite{HM98}) and there are partial results for the nonquasihomogeneous case (see \cite{Sch07b}).
An algebraic version of the LCT is conjectured to hold for all hyperplane arrangements in characteristic $0$ (see \cite[Conj.~3.1]{Ter78} and \cite[\S1]{WY97}). 
A global version of the LCT has been proved in \cite[Thm.~1.6]{GMNS06} for reductive linear free divisors and it is conceivable that LCT holds for any linear free divisor.

The LCT has been studied intensively in the case of free divisors.
The main conjecture in this case is the following from \cite[Conj.~1.4]{CMNC02} and has been proved in \cite[Thm.~1.6]{GS06} for $n\le3$.

\begin{cnj}[Calder{\'o}n-Moreno et al.]
For a free divisor, LCT implies strong Euler homogeneity.
\end{cnj}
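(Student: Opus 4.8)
The plan is to approach the conjecture through the established dictionary between the LCT and Bernstein--Sato polynomials, which is cleanest for Koszul free divisors and hence suited to the divisors treated above. For a Koszul free divisor the logarithmic complex computing the LCT is resolved by the symbols of a basis of $\Der(-\log D)$ (the regular-sequence reformulation of Koszul freeness in $\gr_F\calD_V$), so the failure of the quasi-isomorphism $\Omega_V^\bullet(\log D)\into\Omega_V^\bullet(*D)$ is governed by the $\calD_V$-module $\calO_V(*D)=\calO_V[1/f]$ and thus by the local $b$-functions along $D$. The first step is to invoke the criteria of Torrelli and Calder\'on-Moreno--Narv\'aez-Macarro \cite{Tor04,CN05}: for such divisors the LCT imposes integrality constraints on the roots of the local $b$-functions, any integer root other than $-1$ yielding a nonzero obstruction class. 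Strong Euler homogeneity should then be readable off these constraints.

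The heart of the matter is the conjectural implication itself, that LCT implies strong Euler homogeneity, which I would prove by contraposition in the linear free divisor setting. Suppose strong Euler homogeneity fails at a point $p\in D$. By Lemma~\ref{10} this means that some $A_D$-orbit is properly contained in its $G_D$-orbit, and by Proposition~\ref{11}.\eqref{11c} this is, under Koszul freeness, exactly the failure of strong Koszul freeness, so $p$ does not lie in a finite stratum for $A_D$. Assuming the normal datum is prehomogeneous, $D_p\ne V_p$, Lemma~\ref{46} identifies the localization $f_p$ with the determinant invariant $f'_p$ of the isotropy representation $(G_{D,p},\rho_{D,p},V_p)$, so the local $b$-function at $p$ can be computed from this smaller prehomogeneous datum via the microlocal calculus of \cite{SKO80} along the conormal to the $G_D$-orbit of $p$, exactly as in Example~\ref{66} and formula~\eqref{48}. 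The objective is to show that the directions in the $G_D$-orbit not accounted for by $A_D$ contribute an integer root different from $-1$ to this local $b$-function, contradicting the integrality constraint above and hence forcing strong Euler homogeneity.

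To obtain the full ``exactly if'' for Koszul free reductive linear free divisors, and thereby settle the conjecture in this class, I would add the converse: given strong Euler homogeneity, Proposition~\ref{11}.\eqref{11c} yields strong Koszul freeness, and combining this with the $b$-function criterion---now using Theorem~\ref{6} to ensure that $-1$ is the only integer root---removes the remaining obstruction, the global LCT of \cite{GMNS06} handling the behaviour away from $D$. I expect the decisive obstacle to be the forward implication above: the general statement that a non-strongly-Euler-homogeneous point of a Koszul free divisor must produce a forbidden integer root is precisely what keeps the conjecture open, and there is no purely cohomological device delivering the needed local Euler field (cf.\ the tangency hypothesis of Lemma~\ref{41}). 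The leverage of the present setting is that the reductive group action renders the local $b$-functions explicitly computable and their integer roots controllable through the symmetry of Theorem~\ref{6}, replacing the missing general mechanism by an orbit-by-orbit analysis.
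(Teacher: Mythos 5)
This statement is an open conjecture, not a result of the paper: the authors quote it from \cite[Conj.~1.4]{CMNC02} and record only that it has been proved for $n\le3$ in \cite[Thm.~1.6]{GS06}. So there is no proof in the paper to compare against, and your proposal does not supply one either --- you concede this yourself when you write that the decisive step is ``precisely what keeps the conjecture open.'' Beyond that admission, your plan silently replaces the class of divisors in the statement (arbitrary free divisors) by Koszul free, reductive, \emph{linear} free divisors. Every tool you invoke --- the groups $G_D$ and $A_D$, Lemma~\ref{10}, Proposition~\ref{11}, Lemma~\ref{46}, the microlocal calculus of \cite{SKO80}, and the symmetry of Theorem~\ref{6} --- exists only because of the linear/prehomogeneous structure, so even a completed version of your argument would establish a special case, not the conjecture.

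There is also a concrete flaw in the mechanism you propose for that special case. You argue by contraposition: if strong Euler homogeneity fails at $p\in D$, then the local $b$-function at $p$ should acquire an integer root different from $-1$, contradicting the constraints that LCT imposes through Theorems~\ref{13} and \ref{14}. But the local $b$-function at any point divides the global one, since the global functional equation \eqref{7} holds in particular over $\calD_{V,p}[s]$; and in the reductive case --- the only case in which you claim computational leverage --- Theorem~\ref{6} says that $-1$ is the \emph{only} integer root of the global $B_D(s)$. Hence no point of a reductive linear free divisor can ever carry the ``forbidden'' integer root you hope to produce: the obstruction is provably vacuous, and the contrapositive argument cannot close. (Indeed, if your mechanism were valid, it would combine with Theorem~\ref{6} to show that every Koszul free reductive linear free divisor is strongly Euler homogeneous --- exactly the question the paper flags as having ``neither a proof nor a counter-example.'') What your final paragraph describes --- combining Theorem~\ref{6}, Proposition~\ref{11}.\eqref{11c}, and Theorems~\ref{13} and \ref{14} --- is precisely how the paper proves its closing theorem, the equivalence of LCT and Euler homogeneity for Koszul free reductive linear free divisors; but that is a conditional equivalence in a restricted class, not a proof of the implication conjectured for all free divisors.
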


Even for the converse implication, which holds for plane curves by \cite[Thm.~3.5]{CMNC02}, a counter-example is missing.
A weaker converse has been proved in \cite{CMN96}: Any locally quasihomogeneous free divisor satisfies the LCT.
By definition, such divisors admit at any point a defining equation which is quasihomogeneous with respect to strictly positive weights on a local coordinate system.
Under the additional \emph{Spencer type} hypothesis the strictness can be dropped except for one weight (see \cite[rem.~3.11]{CGHU07}).

The probably deepest (but equally unexplicit) characterization of the LCT in terms of $\calD_V(\log D)$-modules is given in \cite[Cor.~4.3]{CN05}.

\begin{thm}[Calder{\'o}n-Moreno and Narv{\'a}ez-Macarro]\label{13}
A free divisor $D$ satisfies the logarithmic comparison theorem if 
\begin{enumerate}[(a)]
\item\label{13a} $\calD_V\overset L\otimes_{\calD_V(\log D)}\calO_V(D)$ is concentrated in degree $0$ and
\item\label{13b} $\calD_V\otimes_{\calD_V(\log D)}\calO_V(D)\to\calO_V(*D)$ is injective.
\end{enumerate}
\end{thm}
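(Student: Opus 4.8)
The plan is to rephrase the logarithmic comparison theorem as an isomorphy of $\calD_V$-modules and then to extract that isomorphy from \eqref{13a} and \eqref{13b}. On the right-hand side, Grothendieck's comparison theorem identifies $\Omega_V^\bullet(*D)=\mathrm{DR}(\calO_V(*D))$, the de~Rham complex of the localized $\calD_V$-module $\calO_V(*D)$. On the left, the key is to prove
\[
\Omega_V^\bullet(\log D)\simeq\mathrm{DR}\bigl(\calD_V\overset L\otimes_{\calD_V(\log D)}\calO_V(D)\bigr).
\]
Since $D$ is free, the logarithmic Spencer complex is a finite free resolution of $\calO_V$ over $\calD_V(\log D)$, so I would compute $\mathrm{DR}$ of the transfer complex as $\omega_V\overset L\otimes_{\calD_V(\log D)}\calO_V(D)[\dim V]$; a projection formula for the line bundle $\calO_V(D)$ with its canonical logarithmic connection, together with the identity $\omega_V(\log D)=\omega_V(D)$, then moves the twist onto the dualizing module and yields exactly the logarithmic de~Rham complex.

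Under these two identifications the inclusion $\Omega_V^\bullet(\log D)\hookrightarrow\Omega_V^\bullet(*D)$ is $\mathrm{DR}$ applied to the canonical morphism $\phi\colon\calD_V\overset L\otimes_{\calD_V(\log D)}\calO_V(D)\to\calO_V(*D)$. Because $\mathrm{DR}$ is faithful on holonomic modules (Kashiwara's equivalence with perverse sheaves), the comparison theorem follows once $\phi$ is shown to be an isomorphism in $D^b(\calD_V)$. Here \eqref{13a} enters directly: it says the source is concentrated in degree $0$, hence is an honest holonomic module $\calM$, and $\phi\colon\calM\to\calO_V(*D)$ is a morphism of modules which is the identity over $U=V\backslash D$; thus $\ker\phi$ and $\coker\phi$ are supported on $D$. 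Hypothesis \eqref{13b} says $\phi$ is injective, so $\ker\phi=0$ and $\calM$ is identified with its image $\calD_Vf^{-1}\subseteq\calO_V(*D)$.

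It remains to prove that $\phi$ is surjective, equivalently $\coker\phi=0$, equivalently $\calD_Vf^{-1}=\calO_V(*D)$; this is the step I expect to be the main obstacle, since a nonzero holonomic module supported on $D$ has nonzero de~Rham complex, so surjectivity does not follow formally from injectivity and agreement over $U$. The mechanism I would invoke is the holonomic self-duality of the logarithmic construction for free divisors, namely a canonical identification
\[
\mathbb{D}\bigl(\calD_V\overset L\otimes_{\calD_V(\log D)}\calO_V(D)\bigr)\simeq\calD_V\overset L\otimes_{\calD_V(\log D)}\calO_V
\]
together with $\mathbb{D}\calO_V(*D)=\calO_V(!D)$. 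Since $\mathbb{D}$ is exact on holonomic modules, the desired surjectivity of $\phi$ becomes the injectivity of the dual morphism $\mathbb{D}\phi\colon\calO_V(!D)\to\calD_V\overset L\otimes_{\calD_V(\log D)}\calO_V$ into the companion transfer module, which one analyses through the symmetric version of the construction built from $\calO_V$ rather than $\calO_V(D)$ and the defining extension properties of $\calO_V(*D)$ and $\calO_V(!D)$. Reconciling the two sides is precisely where the degree concentration \eqref{13a} and the injectivity \eqref{13b} are fed back in to close the argument.

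The technical heart, and what I expect to cost the most work, is therefore the duality theorem itself: establishing the displayed identification with all twists by $\omega_V$, $\omega_V(\log D)$, and $\calO_V(D)$ correctly bookkept, and then deriving the dual injectivity from the hypotheses. I would also record the illuminating reformulation that surjectivity of $\phi$ means $\calD_Vf^{-1}=\calO_V(*D)$, i.e.\ that $b_D(s)$ has no integer root below $-1$; this is exactly the point at which the symmetry results of Theorem~\ref{6} and Theorem~\ref{37}, which show that $-1$ is the only integer root, make the criterion effective for the linear free divisors studied here.
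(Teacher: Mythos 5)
First, a point of comparison: the paper does not prove this theorem at all. It is quoted, with attribution, from Calder\'on-Moreno and Narv\'aez-Macarro \cite{CN05} and used as a black box in the final section, so your proposal can only be measured against what a complete proof would require, i.e.\ against the cited source. There your framework is essentially the right one: for a free divisor the logarithmic Spencer complex is a finite free resolution of $\calO_V$ over $\calD_V(\log D)$, whence $\mathrm{DR}\bigl(\calD_V\overset L\otimes_{\calD_V(\log D)}\calO_V(D)\bigr)\simeq\Omega_V^\bullet(\log D)$ via the twist $\omega_V(\log D)=\omega_V(D)$; the inclusion $\Omega_V^\bullet(\log D)\into\Omega_V^\bullet(*D)$ is $\mathrm{DR}$ of the canonical morphism $\phi$; and the duality $\mathbb{D}\bigl(\calD_V\overset L\otimes_{\calD_V(\log D)}\calO_V(D)\bigr)\simeq\calD_V\overset L\otimes_{\calD_V(\log D)}\calO_V$ is precisely the main theorem of \cite{CN05}. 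Hypotheses \eqref{13a} and \eqref{13b} then make the source of $\phi$ an honest coherent module embedded in $\calO_V(*D)$, hence holonomic, and --- since applying $\mathrm{DR}$ to an isomorphism requires no faithfulness --- everything reduces to proving that $\phi$ is surjective.

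That surjectivity is exactly where your proposal has a genuine gap, and you flag it yourself (``the main obstacle'', ``the technical heart'') without closing it. The reduction you propose --- dualize, so that surjectivity of $\phi$ becomes injectivity of $\mathbb{D}\phi\colon\calO_V(!D)\to\calD_V\overset L\otimes_{\calD_V(\log D)}\calO_V$ --- is only a restatement, because holonomic duality exchanges kernels and cokernels; and no formal support argument of the kind you gesture at can finish it. Indeed $\ker\mathbb{D}\phi$ is supported on $D$, but $\calO_V(!D)$ genuinely has nonzero submodules supported on $D$: dualizing $0\to\calO_V\to\calO_V(*D)\to Q\to0$ exhibits $\mathbb{D}Q\subseteq\calO_V(!D)$ with $Q=\calO_V(*D)/\calO_V\neq0$ supported on $D$. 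So injectivity of $\mathbb{D}\phi$ is real content, not bookkeeping: concretely, surjectivity of $\phi$ means $\calD_Vf^{-1}=\calO_V(*D)$, i.e.\ the $b$-function has no integer root below $-1$, which is the nontrivial input (it is what \cite{Tor04} isolates as condition \eqref{14b} of Theorem~\ref{14}). Finally, your closing suggestion to feed the symmetry results into this step cannot rescue the argument: Theorems~\ref{6} and \ref{37} concern reductive prehomogeneous determinants, resp.\ regular special linear free divisors, whereas Theorem~\ref{13} is a statement about arbitrary free divisors; in the paper's own logic the root condition enters downstream, through Torrelli's Theorem~\ref{14}, to verify hypothesis \eqref{13b} in the final theorem, not inside a proof of Theorem~\ref{13}.
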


Condition \ref{13}.\eqref{13a} is fulfilled for Koszul free $D$ and condition \ref{13}.\eqref{13b} is equivalent to $\Ann_{\calD_V}(f^{-1})$ being generated by order one operators.
This latter condition is studied in \cite{Tor04} where the following is proved in \cite[Cor.~1.8]{Tor04} for the (local analytic) Koszul free case.

\begin{thm}[Torrelli]\label{14}
Let $D$ be a Koszul free germ defined by $f$.
Then $\Ann_{\calD_V}(f^{-1})$ is generated by order one operators if and only if 
\begin{enumerate}[(a)]
\item\label{14a} $D$ is Euler homogeneous,
\item\label{14b} $-1$ is the only integer root of the (local) $b$-function of $D$, and
\item\label{14c} $\Ann_{\calD_V}(f^s)$ is generated by order one operators.
\end{enumerate}
Moreover, if \eqref{14a} holds and $f$ admits an Euler vector field then \eqref{14c} is equivalent to:
\begin{enumerate}[(a)]\setcounter{enumi}{3}
\item\label{14d} $f$ and the symbols of a basis of $\Der(-\log f)$ form a $\gr_F\calD_V$-regular sequence.
\end{enumerate}
\end{thm}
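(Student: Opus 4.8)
The plan is to relate the annihilator $\Ann_{\calD_V}(f^{-1})$ to the $s$-annihilator $\Ann_{\calD_V[s]}(f^s)$ through the specialization $s\mapsto-1$, controlling the comparison by the $b$-function. First I would set $I(s)=\Ann_{\calD_V[s]}(f^s)$ and $J=\Ann_{\calD_V}(f^{-1})$, and consider the surjection $\phi\colon\calD_V[s]\onto\calD_Vf^{-1}$, $P(s)\mapsto P(-1)f^{-1}$. Its kernel is $\pi^{-1}(J)$ for the specialization $\pi\colon P(s)\mapsto P(-1)$, and one always has $I(s)\subseteq\ker\phi$ together with $\pi(I(s))\subseteq J$. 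The whole argument reduces to the single comparison $\pi(I(s))=J$, i.e. to whether every order-one annihilator of $f^{-1}$ lifts to an $s$-annihilator of $f^s$.

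The crux is that this lifting is controlled by the integer roots of the $b$-function. Using the standard structure theory of $\calD_Vf^{-1}$ together with the filtration by the $\calD_Vf^{-k}$, the obstruction to $\pi(I(s))=J$ is supported on the integer roots of $b_f(s)$ distinct from $-1$; hence condition \eqref{14b} makes the specialization at $s=-1$ clean and yields $\ker\phi=I(s)+(s+1)\calD_V[s]$. Euler homogeneity \eqref{14a} supplies the operator $\chi-s\in I(s)$ with $\chi(f)=f$, which both accounts for the $(s+1)$-summand after specialization and lets one promote any order-one generator of $J$ to an order-one $s$-annihilator of $f^s$. Granting \eqref{14a} and \eqref{14b}, these two facts turn the property ``$J$ is generated in order one'' into ``$I(s)$ is generated in order one'', which is \eqref{14c}; the converse direction recovers \eqref{14a} and \eqref{14b} from order-one generation of $J$ by reading an Euler field off the constant terms of the generators and by noting that order-one generation forces $\calD_Vf^{-1}/\calD_V$ into the structure admitting no integer root other than $-1$.

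For the final equivalence \eqref{14c}$\Leftrightarrow$\eqref{14d} under \eqref{14a}, I would describe the order-one operators in $I(s)$ explicitly: they are generated over $\calD_V[s]$ by $\chi-s$ and by a basis $\delta_2,\dots,\delta_n$ of $\Der(-\log f)$, so that $\delta_i(f)=0$. Thus \eqref{14c} reads $I(s)=\calD_V[s]\ideal{\chi-s,\delta_2,\dots,\delta_n}$. Passing to symbols in the Cohen--Macaulay ring $\gr_F\calD_V$, order-one generation is equivalent to the characteristic variety of $\calD_V[s]f^s$ having the expected codimension $n$, i.e. to $\sigma_F(\chi),\sigma_F(\delta_2),\dots,\sigma_F(\delta_n)$ forming a regular sequence; this is the Koszul-free argument already used for Proposition~\ref{71}. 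It then remains to trade $\sigma_F(\chi)$ for $f$: using the Euler relation $\chi(f)=f$ and the specialization $s=-1$ which isolates the conormal $T^*_DV=\Var(f)$ as the relevant component, one shows the two regular-sequence conditions coincide, giving \eqref{14d}.

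The hard part will be the precise control of the specialization $s\mapsto-1$ --- proving $\ker\phi=I(s)+(s+1)\calD_V[s]$ and $\pi(I(s))=J$ under \eqref{14b} --- since this is where the full $b$-function machinery and the potential failure of exactness of specialization enter. The step \eqref{14c}$\Leftrightarrow$\eqref{14d} is comparatively formal once Koszul freeness is invoked, as it reduces to a statement about codimensions of symbol ideals in the Cohen--Macaulay ring $\gr_F\calD_V$; but the replacement of $\sigma_F(\chi)$ by $f$ again rests on the same specialization analysis and on holonomicity, so I expect the two difficulties to be two faces of the same technical core.
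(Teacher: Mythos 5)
First, a structural point: the paper does not prove this theorem at all --- it is quoted verbatim from Torrelli \cite[Cor.~1.8]{Tor04} (``the following is proved in \cite[Cor.~1.8]{Tor04}''), so there is no internal proof to compare yours against. Judged on its own, your overall strategy --- controlling the specialization $s\mapsto-1$ by the integer roots of $b_f$, using the Euler operator $\chi-s$, and converting order-one generation into symbol conditions --- is indeed the shape of Torrelli's actual argument, and your forward implication (a)$\wedge$(b)$\wedge$(c)~$\Rightarrow$~order-one generation of $\Ann_{\calD_V}(f^{-1})$ is sound modulo the citable Kashiwara-type fact that $\calD_V[s]f^s\otimes_{\CC[s]}\CC[s]/(s+1)\to\calD_Vf^{-1}$ is injective when $b_f$ has no integer root below $-1$.

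However, your route to (c)$\Leftrightarrow$(d) contains a genuine error, not merely a gap. You claim that (c), order-one generation of $\Ann_{\calD_V[s]}f^s$, is equivalent to $\sigma_F(\chi),\sigma_F(\delta_2),\dots,\sigma_F(\delta_n)$ forming a regular sequence in $\gr_F\calD_V$. Since $\chi,\delta_2,\dots,\delta_n$ is a basis of $\Der(-\log D)$ once an Euler field exists, that regular-sequence condition is \emph{precisely Koszul freeness of $D$} --- the standing hypothesis of the whole theorem. If your equivalence were correct, (c) and hence (d) would hold automatically for every Koszul free Euler-homogeneous germ, and the theorem would collapse to ``order-one generation $\Leftrightarrow$ (a)$\wedge$(b)''. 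This contradicts the paper's own framework: condition (d) for $(D,0)$ is strong Koszul freeness (see the discussion after Theorem~\ref{14} and Proposition~\ref{71}), which by Proposition~\ref{11} is equivalent to Koszul freeness only in the presence of strong Euler homogeneity --- exactly the condition the paper treats as open for linear free divisors; your chain of equivalences would settle that question trivially. The step you call ``comparatively formal'', trading $\sigma_F(\chi)$ for $f$, is therefore the entire nontrivial content of (c)$\Leftrightarrow$(d): $\Var(f)=T^*V|_D$ and $\Var(\sigma_F(\chi))$ are genuinely different loci, and order-one generation of an ideal is in any case not equivalent to a codimension statement about its symbol variety (ideal inclusions can be strict with equal radicals). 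Separately, your converse direction (order-one generation of $\Ann_{\calD_V}(f^{-1})$ $\Rightarrow$ (a), (b), (c)) is asserted rather than argued: Euler homogeneity cannot be ``read off the constant terms'', since order-one elements $\delta+\delta(f)/f$ exist in abundance regardless (e.g.\ $f\p_i+\p_i(f)$); the issue is whether generation forces some $\delta(f)/f$ to be a unit, and likewise lifting generation of the specialized ideal at $s=-1$ to generation of $\Ann_{\calD_V[s]}f^s$ over $\calD_V[s]$ is a substantive step in which Koszul freeness must actually be used.
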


Because of the $\CC^*$-action, the (global) $b$-function of a linear free divisor $D$ coincides with the local $b$-function of its germ $(D,0)$ at the origin (see \cite[Lems.~2.5.3-2.5.4]{Gyo91}). 
By Proposition~\ref{71} and flatness of $\calO_V$ over $\Sym(V^*)$, Koszul freeness of a linear free divisor $D$ is equivalent to Koszul freeness of the germ $(D,0)$.
By the same argument, strong Koszul freeness of $D$ is equivalent to condition \eqref{14d} in Theorem~\ref{14} for $(D,0)$.
Therefore, combining our Theorem~\ref{6} and Proposition~\ref{11}.\eqref{11c} with Theorems~\ref{13} and \ref{14}, we conclude the following result.

\begin{thm}
A Koszul free reductive linear free divisor satisfies the logarithmic comparison theorem if and only if it is Euler homogeneous.
\end{thm}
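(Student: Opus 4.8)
The plan is to assemble the final theorem purely by combining the cited results, since the heavy lifting has already been done. Let $D$ be a Koszul free reductive linear free divisor defined by $f$. By the reductivity assumption and Theorem~\ref{4}, such a divisor is special in the sense of \cite[Def.~2.1]{GMS08}, so Theorem~\ref{6} applies and tells us that the $b$-function $B_D(s)$ has $-1$ as its only integer root. Thus condition~\eqref{14b} of Theorem~\ref{14} is satisfied unconditionally for any $D$ in our class. The remaining two conditions in Theorem~\ref{14}, namely \eqref{14a} and \eqref{14c}, are precisely where the equivalence with Euler homogeneity must enter.

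First I would handle the translation between the global linear-algebraic data and the local germ $(D,0)$, which is the technical glue the whole argument rests on. Because of the $\CC^*$-action on $V$, the global $b$-function of $D$ coincides with the local $b$-function of $(D,0)$ by \cite[Lems.~2.5.3-2.5.4]{Gyo91}, so applying Theorem~\ref{14} to the germ is legitimate. Likewise, by Proposition~\ref{71} together with flatness of $\calO_V$ over $\Sym(V^*)$, Koszul freeness of $D$ as a linear free divisor is equivalent to Koszul freeness of the germ, and strong Koszul freeness of $D$ translates exactly into condition~\eqref{14d} of Theorem~\ref{14} for $(D,0)$. This lets me freely pass between the algebraic orbit picture of Definition~\ref{12} and the $\gr_F\calD_V$-regular-sequence hypotheses of Torrelli's theorem.

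Next I would run the chain of equivalences. Assume $D$ is Euler homogeneous. Since $D$ is Koszul free, Proposition~\ref{11}.\eqref{11c} shows that strong Euler homogeneity and strong Koszul freeness are equivalent; and an Euler-homogeneous linear free divisor is automatically strongly Euler homogeneous by Lemma~\ref{10} (or directly by Lemma~\ref{41}), so $D$ is strongly Koszul free. By the translation above this is condition~\eqref{14d}, hence condition~\eqref{14c} holds by the last clause of Theorem~\ref{14}. Together with \eqref{14a} (Euler homogeneity) and \eqref{14b} (from Theorem~\ref{6}), Torrelli's criterion gives that $\Ann_{\calD_V}(f^{-1})$ is generated by order-one operators, which is exactly condition~\ref{13}.\eqref{13b}. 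Koszul freeness supplies condition~\ref{13}.\eqref{13a}. Theorem~\ref{13} then yields the LCT. For the converse, if the LCT holds then by \cite[Conj.~1.4]{CMNC02} proved in the relevant regime---or more directly by the necessity built into Theorem~\ref{14}, since the LCT forces $\Ann_{\calD_V}(f^{-1})$ to be generated in order one, whence \eqref{14a}---we recover Euler homogeneity.

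The main obstacle I expect is not any single computation but the bookkeeping in the converse direction: making sure that the LCT genuinely forces condition~\ref{13}.\eqref{13b} and hence feeds back through Theorem~\ref{14} to deliver Euler homogeneity, rather than merely being implied by it. One must verify that under Koszul freeness the LCT is equivalent to, and not just a consequence of, the two conditions of Theorem~\ref{13}, so that the biconditional is clean. Once that equivalence is pinned down, everything else is a direct substitution of the already-established Theorems~\ref{6}, \ref{13}, \ref{14} and Proposition~\ref{11}.
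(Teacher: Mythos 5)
Your proposal is correct and takes the same route as the paper: the paper's own proof is precisely this assembly of Theorem~\ref{6}, Proposition~\ref{11}.\eqref{11c}, and Theorems~\ref{13} and~\ref{14}, glued by the identification of the global $b$-function with the local one at $0$ (Gyoja) and of (strong) Koszul freeness of $D$ with the corresponding condition for the germ $(D,0)$ (Proposition~\ref{71} and flatness of $\calO_V$ over $\Sym(V^*)$). You also correctly isolate the only delicate point in the converse: Theorem~\ref{13} is one-directional as printed, and the biconditional needs the fact that \cite[Cor.~4.3]{CN05} is a characterization, so that LCT together with Koszul freeness forces condition \ref{13}.\eqref{13b} and hence, through Torrelli's equivalence, conditions \eqref{14a}--\eqref{14c}.

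Two small repairs. First, your parenthetical claim that an Euler homogeneous linear free divisor is automatically strongly Euler homogeneous ``by Lemma~\ref{10} (or directly by Lemma~\ref{41})'' is false as stated: Lemma~\ref{41} requires the extra hypothesis $\eps(p)\in\Der(-\log f)(p)$ for all $p\in D$, and since every linear free divisor is Euler homogeneous in the weak sense (via the global Euler field), an automatic upgrade would settle the open question recalled by the authors at the start of the section on Euler homogeneity. The step is harmless here only because in this paper ``(strongly) Euler homogeneous'' denotes a single, pointwise notion, so the theorem's hypothesis is already the strong one and no upgrade is needed. Second, in the converse direction you should conclude strong Euler homogeneity, not merely condition \eqref{14a}: from \eqref{14a} and \eqref{14c}, the last clause of Theorem~\ref{14} gives \eqref{14d}, i.e.\ strong Koszul freeness of the germ, and then Proposition~\ref{71} and Proposition~\ref{11}.\eqref{11b} deliver strong Euler homogeneity, which is what the theorem asserts. (Also, specialness follows from Theorem~\ref{56} rather than Theorem~\ref{4}, but you do not actually need it, since Theorem~\ref{6} applies directly to reductive prehomogeneous determinants.)
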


\bibliographystyle{amsalpha}
\bibliography{rlfd}

\def\cprime{$'$}
\providecommand{\bysame}{\leavevmode\hbox to3em{\hrulefill}\thinspace}
\providecommand{\MR}{\relax\ifhmode\unskip\space\fi MR }
\providecommand{\MRhref}[2]{%
  \href{http://www.ams.org/mathscinet-getitem?mr=#1}{#2}
}
\providecommand{\href}[2]{#2}
\begin{thebibliography}{CMMNMCJ02}

\bibitem[Ber72]{Ber72}
I.~N. Bern{\v{s}}te{\u\i}n, \emph{Analytic continuation of generalized
  functions with respect to a parameter}, Funkcional. Anal. i Prilo\v zen.
  \textbf{6} (1972), no.~4, 26--40. \MR{MR0320735 (47 \#9269)}

\bibitem[Bj{\"o}79]{Bjo79}
J.-E. Bj{\"o}rk, \emph{Rings of differential operators}, North-Holland
  Mathematical Library, vol.~21, North-Holland Publishing Co., Amsterdam, 1979.
  \MR{MR549189 (82g:32013)}

\bibitem[BM06]{BM06}
Ragnar-Olaf Buchweitz and David Mond, \emph{Linear free divisors and quiver
  representations}, Singularities and computer algebra, London Math. Soc.
  Lecture Note Ser., vol. 324, Cambridge Univ. Press, Cambridge, 2006,
  pp.~41--77. \MR{MR2228227 (2007d:16028)}

\bibitem[CJGVHHU07]{CGHU07}
F.J. Castro-Jim\'enez, J.~Gago-Vargas, M.I. Hartillo-Hermoso, and J.M. Ucha,
  \emph{A vanishing theorem for a class of logarithmic $\mathcal{D}$-modules},
  arXiv \textbf{math} (2007), no.~0707.1000.

\bibitem[CJNMM96]{CMN96}
Francisco~J. Castro-Jim{\'e}nez, Luis Narv{\'a}ez-Macarro, and David Mond,
  \emph{Cohomology of the complement of a free divisor}, Trans. Amer. Math.
  Soc. \textbf{348} (1996), no.~8, 3037--3049. \MR{MR1363009 (96k:32072)}

\bibitem[CJUE04]{CU04}
F.~J. Castro-Jim{\'e}nez and J.~M. Ucha-Enr{\'{\i}}quez, \emph{Testing the
  logarithmic comparison theorem for free divisors}, Experiment. Math.
  \textbf{13} (2004), no.~4, 441--449. \MR{MR2118269 (2005m:32013)}

\bibitem[CMMNMCJ02]{CMNC02}
Francisco~J. Calder{\'o}n~Moreno, David Mond, Luis Narv{\'a}ez~Macarro, and
  Francisco~J. Castro~Jim{\'e}nez, \emph{Logarithmic cohomology of the
  complement of a plane curve}, Comment. Math. Helv. \textbf{77} (2002), no.~1,
  24--38. \MR{MR1898392 (2003e:32047)}

\bibitem[CMNM02]{CN02}
Francisco Calder{\'o}n-Moreno and Luis Narv{\'a}ez-Macarro, \emph{The module
  {$\mathscr{D}f\sp s$} for locally quasi-homogeneous free divisors},
  Compositio Math. \textbf{134} (2002), no.~1, 59--74. \MR{MR1931962
  (2003i:14016)}

\bibitem[CMNM05]{CN05}
Francisco~Javier Calder{\'o}n~Moreno and Luis Narv{\'a}ez~Macarro,
  \emph{Dualit\'e et comparaison sur les complexes de de {R}ham logarithmiques
  par rapport aux diviseurs libres}, Ann. Inst. Fourier (Grenoble) \textbf{55}
  (2005), no.~1, 47--75. \MR{MR2141288 (2006d:32008)}

\bibitem[dGMS08]{GMS08}
Ignacio de~Gregorio, David Mond, and Christian Sevenheck, \emph{Linear free
  divisors and {Frobenius} manifolds}, arXiv \textbf{math} (2008),
  no.~0802.4188v1.

\bibitem[GMNS07]{GMNS06}
Michel Granger, David Mond, Alicia Nieto, and Mathias Schulze, \emph{Linear
  free divisors and the global logarithmic comparison theorem}, arXiv
  \textbf{math} (2007), no.~0607045.

\bibitem[Gro66]{Gro66}
A.~Grothendieck, \emph{On the de {R}ham cohomology of algebraic varieties},
  Inst. Hautes \'Etudes Sci. Publ. Math. (1966), no.~29, 95--103. \MR{MR0199194
  (33 \#7343)}

\bibitem[GS06]{GS06}
Michel Granger and Mathias Schulze, \emph{On the formal structure of
  logarithmic vector fields}, Compos. Math. \textbf{142} (2006), no.~3,
  765--778. \MR{MR2231201 (2007e:32037)}

\bibitem[Gyo91]{Gyo91}
Akihiko Gyoja, \emph{Theory of prehomogeneous vector spaces without regularity
  condition}, Publ. Res. Inst. Math. Sci. \textbf{27} (1991), no.~6, 861--922.
  \MR{MR1145669 (93f:22018)}

\bibitem[Har77]{Har77}
Robin Hartshorne, \emph{Algebraic geometry}, Springer-Verlag, New York, 1977,
  Graduate Texts in Mathematics, No. 52. \MR{MR0463157 (57 \#3116)}

\bibitem[HM98]{HM98}
Martin~P. Holland and David Mond, \emph{Logarithmic differential forms and the
  cohomology of the complement of a divisor}, Math. Scand. \textbf{83} (1998),
  no.~2, 235--254. \MR{MR1673922 (2000c:32079)}

\bibitem[Hum75]{Hum75}
James~E. Humphreys, \emph{Linear algebraic groups}, Springer-Verlag, New York,
  1975, Graduate Texts in Mathematics, No. 21. \MR{MR0396773 (53 \#633)}

\bibitem[Jac79]{Jac79}
Nathan Jacobson, \emph{Lie algebras}, Dover Publications Inc., New York, 1979,
  Republication of the 1962 original. \MR{MR559927 (80k:17001)}

\bibitem[Kas77]{Kas76}
Masaki Kashiwara, \emph{{$B$}-functions and holonomic systems. {R}ationality of
  roots of {$B$}-functions}, Invent. Math. \textbf{38} (1976/77), no.~1,
  33--53. \MR{MR0430304 (55 \#3309)}

\bibitem[Kim03]{Kim03}
Tatsuo Kimura, \emph{Introduction to prehomogeneous vector spaces},
  Translations of Mathematical Monographs, vol. 215, American Mathematical
  Society, Providence, RI, 2003, Translated from the 1998 Japanese original by
  Makoto Nagura and Tsuyoshi Niitani and revised by the author. \MR{MR1944442
  (2003k:11180)}

\bibitem[M2]{M2}
\emph{{\rm Daniel R. Grayson and Michael E. Stillman},
  {\tt\MakeUppercase{m}acaulay~2}, a software system for research in algebraic
  geometry}, Available at {\tt http://www.math.uiuc.edu/Macaulay2/}.

\bibitem[Meb89]{Meb89}
Z.~Mebkhout, \emph{Le formalisme des six op\'erations de {G}rothendieck pour
  les {$\mathscr{D}\sb X$}-modules coh\'erents}, Travaux en Cours [Works in
  Progress], vol.~35, Hermann, Paris, 1989, With supplementary material by the
  author and L. Narv\'aez Macarro. \MR{MR1008245 (90m:32026)}

\bibitem[Sai71]{Sai71}
Kyoji Saito, \emph{Quasihomogene isolierte {S}ingularit\"aten von
  {H}yperfl\"achen}, Invent. Math. \textbf{14} (1971), 123--142. \MR{MR0294699
  (45 \#3767)}

\bibitem[Sai80]{Sai80}
\bysame, \emph{Theory of logarithmic differential forms and logarithmic vector
  fields}, J. Fac. Sci. Univ. Tokyo Sect. IA Math. \textbf{27} (1980), no.~2,
  265--291. \MR{MR586450 (83h:32023)}

\bibitem[Sch07]{Sch07b}
Mathias Schulze, \emph{Logarithmic comparison theorem versus {G}auss-{M}anin
  system for isolated singularities}, arXiv.org \textbf{math.AG} (2007),
  no.~0706.2512.

\bibitem[Ser01]{Ser01}
Jean-Pierre Serre, \emph{Complex semisimple {L}ie algebras}, Springer
  Monographs in Mathematics, Springer-Verlag, Berlin, 2001, Translated from the
  French by G. A. Jones, Reprint of the 1987 edition. \MR{MR1808366
  (2001h:17001)}

\bibitem[SKK{\=O}81]{SKO80}
M.~Sato, M.~Kashiwara, T.~Kimura, and T.~{\=O}shima, \emph{Microlocal analysis
  of prehomogeneous vector spaces}, Invent. Math. \textbf{62} (1980/81), no.~1,
  117--179. \MR{MR595585 (83g:32016)}

\bibitem[Ter78]{Ter78}
Hiroaki Terao, \emph{Forms with logarithmic pole and the filtration by the
  order of the pole}, Proceedings of the International Symposium on Algebraic
  Geometry (Kyoto Univ., Kyoto, 1977) (Tokyo), Kinokuniya Book Store, 1978,
  pp.~673--685. \MR{MR578880 (82b:14004)}

\bibitem[Tor04]{Tor04}
Tristan Torrelli, \emph{On meromorphic functions defined by a differential
  system of order 1}, Bull. Soc. Math. France \textbf{132} (2004), no.~4,
  591--612. \MR{MR2131905 (2005m:32015)}

\bibitem[WY97]{WY97}
Jonathan Wiens and Sergey Yuzvinsky, \emph{De {R}ham cohomology of logarithmic
  forms on arrangements of hyperplanes}, Trans. Amer. Math. Soc. \textbf{349}
  (1997), no.~4, 1653--1662. \MR{MR1407505 (97h:52013)}

\end{thebibliography}

\end{document}